\documentclass[a4paper,12pt]{article}
\usepackage[text={6.5in,8.4in},centering]{geometry}
\setlength{\topmargin}{-0.25in}

\usepackage{graphicx,url,subfig}
\RequirePackage[OT1]{fontenc}
\RequirePackage{amsthm,amsmath,amssymb,amscd}
\RequirePackage[numbers]{natbib}
\RequirePackage[colorlinks,citecolor=blue,urlcolor=blue]{hyperref}
\usepackage{enumerate}
\usepackage{datetime}
\usepackage{etex}

\makeatother
\numberwithin{equation}{section}
\allowdisplaybreaks


\theoremstyle{plain}
\newtheorem{theorem}{Theorem}[section]
\newtheorem{corollary}[theorem]{Corollary}
\newtheorem{lemma}[theorem]{Lemma}
\newtheorem{proposition}[theorem]{Proposition}

\theoremstyle{definition}
\newtheorem{definition}[theorem]{Definition}

\newcommand{\E}{\mathbb{E}}
\newcommand{\W}{\dot{W}}
\newcommand{\ud}{\ensuremath{\mathrm{d}}}

\newcommand{\sgn}{\text{sgn}}

\newcommand{\Indt}[1]{1_{\left\{#1 \right\}}}

\newcommand{\Norm}[1]{\left|\left|  #1   \right|\right|}

\newcommand{\Itos}{It\^{o}'s }
\newcommand{\spt}[1]{\text{supp}\left(#1\right)}
\newcommand{\InPrd}[1]{\left\langle #1 \right\rangle}

\newcommand{\calB}{\mathcal{B}}

\newcommand{\calF}{\mathcal{F}}

\newcommand{\calK}{\mathcal{K}}

\newcommand{\calL}{\mathcal{L}}
\newcommand{\calM}{\mathcal{M}}
\newcommand{\calN}{\mathcal{N}}

\newcommand{\bbC}{\mathbb{C}}
\newcommand{\bbN}{\mathbb{N}}

\newcommand*{\one}{{{\rm 1\mkern-1.5mu}\!{\rm I}}}

\newcommand{\R}{\mathbb{R}}
\newcommand{\myEnd}{\hfill$\square$}

\DeclareMathOperator{\Lip}{\mathit{L}}
\DeclareMathOperator{\LIP}{Lip}
\DeclareMathOperator{\lip}{\mathit{l}}

\DeclareMathOperator{\Vip}{\overline{\varsigma}}
\DeclareMathOperator{\vip}{\underline{\varsigma}}

\newcommand{\Dxa}{{}_x D_\delta^a}

\newcommand{\lMr}[3]{\:{}_{#1} #2_{#3}}
\newcommand{\tlMr}[4]{\:{}^{\hspace{0.2em}#1}_{#2} \hspace{-0.1em}#3_{#4}}



\title{On comparison principle and strict positivity of solutions to
the nonlinear stochastic fractional heat equations
}

%
\author{
{\bf Le Chen\footnote{
Research supported by a fellowship from Swiss National Science Foundation.}} \; and {\bf Kunwoo Kim}
\\[1em]
University of Utah
\date{}
}


\begin{document}
\maketitle
\begin{center}
\begin{minipage}[rct]{5 in}
\footnotesize \textbf{Abstract:}
In this paper, we prove a sample-path comparison principle for the nonlinear stochastic fractional heat equation on $\R$ with measure-valued initial data.
We give quantitative estimates about how close to zero the solution can be.
These results extend Mueller's comparison principle on the stochastic heat equation to allow more general initial data
such as the (Dirac) delta measure and measures with heavier tails than linear exponential growth at $\pm\infty$.
These results generalize a recent work by Moreno Flores \cite{Moreno14Pos},
who proves the strict positivity of the solution to the  stochastic heat equation
with the delta initial data.
As one application, we establish the {\it full intermittency} for the equation.
As  an intermediate step, we prove the H\"older regularity of the solution starting from measure-valued initial data, which generalizes, in some sense, a recent work by Chen
and Dalang \cite{ChenDalang13Holder}.

\vspace{2ex}
\textbf{MSC 2010 subject classifications:}
Primary 60H15. Secondary 60G60, 35R60.

\vspace{2ex}
\textbf{Keywords:}
nonlinear stochastic fractional heat equation, parabolic Anderson model,
comparison principle, measure-valued initial data, stable processes.
\vspace{4ex}
\end{minipage}
\end{center}

\setlength{\parindent}{1.5em}



\section{Introduction}

The comparison principle for differential equations tells us whether two solutions starting from two distinct initial conditions can compare with each other when the initial
conditions are comparable.
The {\it sample-path comparison principle} for stochastic differential equations (SDEs) and
also for stochastic partial differential equations (SPDEs) have been studied extensively;
see e.g. \cite[Chapter VI]{IkedaWatanabe89} and \cite[Chapter V. 40]{RogerWilliams00Vol2} for SDEs,
and \cite{Assing99Comparison,Kotelenez92Comparison,Milian02Comparison,Mueller91Support,Shiga94Two} for SPDEs.
A related problem is the {\it stochastic comparison principle}, which is of the form:
\[
\E\left(\Phi(u_t\right))\le \E\left(\Phi(v_t)\right),\quad\text{for all $t>0$,}
\]
where $\{u_t(x)\}$ and $\{v_t(x)\}$ solve SDEs or SPDEs, with the same initial data but comparable drift and diffusion coefficients. One looks for as large a class of functions $\Phi$ as
possible. See \cite{CoxFleischmannGreven96Comparison,Hajek85Mean,JackaTribe03Comparison,JosephDavarMueller14Strong}.

In this paper, we will focus on the pathwise comparison principle for
the following nonlinear stochastic fractional heat equation:
\begin{align}\label{E:FracHt}
 \begin{cases}
  \left(\displaystyle\frac{\partial}{\partial t} - \Dxa \right) u(t,x) =
\rho\left(u(t,x)\right) \dot{W}(t,x),& t\in \R_+^*:=\;]0,+\infty[\;,\: x\in\R\cr
u(0,\cdot) = \mu(\cdot)
 \end{cases}
\end{align}
where $a \in \;]1,2]$ is the order of the fractional differential
operator $\Dxa$ and $\delta$ ($|\delta|\le 2-a$) is its
skewness, $\dot{W}$ is the space-time white noise on $\R_+\times\R$, $\mu$ denotes the
initial data (a measure), and the function $\rho:\R\mapsto \R$ is Lipschitz
continuous. Throughout this paper, we assume that $a$ and $\delta$ are fixed constants such that
\begin{align}\label{E:aDelta}
a\in \;]1,2] \quad\text{and}\quad|\delta| \leq 2-a,
\end{align}
unless we state otherwise (see Corollary
\ref{C:FI}).

When $a=2$ and $\delta=0$, the fractional operator $\Dxa$ reduces to the Laplacian on $\R$,
which is the infinitesimal operator for a Brownian motion.
On the other hand, when $a\in\;]1,2[\;$ and $|\delta|\leq 2-a$,
the operator $\Dxa$ is the infinitesimal generator of an $a$-stable process with skewness $\delta$.
In particular, $\lMr{x}{D}{0}^a = -(-\Delta)^{a/2}$.
This fractional Laplace operator has been paid many attentions for several decades because of its non-local property, and thus it is widely used in many areas such as physics, biology, and finance to
model non-local (anomalous) diffusions.
We refer to \cite{MainardiEtc01Fundamental,UchaikinZolotarev99,Zolotarev86} for more details on these fractional operator and the related stable random variables.

The existence and uniqueness of a random field solution to \eqref{E:FracHt} have been studied
in \cite{ChenDalang13Heat,ChenDalang14FracHeat,ConusEtc11InitialMeasure, Debbi06Explicit,DebbiDozzi05On,FoondunKhoshnevisan08Intermittence}.
In particular, the existence, uniqueness, and moment estimates under measure-valued initial data have been established recently in
\cite{ChenDalang13Heat,ChenDalang14FracHeat,ConusEtc11InitialMeasure}.


We now specify the \emph{weak} and \emph{strong} comparison principles.  Let $u_1(t,x)$ and $u_2(t,x)$ be two solutions to \eqref{E:FracHt} with initial measures $\mu_1$ and $\mu_2$,
respectively.
We say that \eqref{E:FracHt} satisfies the {\it weak comparison principle} if
$u_1(t,x)\le u_2(t,x)$ for all $t>0$ and $x\in\R$, a.s., whenever $\mu_1\le \mu_2$ (i.e., $\mu_2-\mu_1$ is a nonnegative measure).
And the equation \eqref{E:FracHt} is said to satisfy the {\it strong comparison principle} if $u_1(t,x)<u_2(t,x)$ for all $t>0$ and $x\in\R$, a.s., whenever $\mu_1<\mu_2$ (i.e., the
measure $\mu_2-\mu_1$ is nonnegative and nonvanishing).
Note that the stochastic comparison principle for \eqref{E:FracHt} with $\Phi(z)=|z|^k$ for $k \geq2$ (so-called the \emph{moment} comparison principle) has been shown
lately by Joseph, Khoshnevisan and Mueller
\cite{JosephDavarMueller14Strong}.

When $a=2$, the equation \eqref{E:FracHt} reduces to the stochastic heat equation (SHE).
The special case when $\rho(u)=\lambda u$ for some constant $\lambda \ne 0$ is called
the {\it parabolic Anderson model}; see
\cite{BertiniCancrini94Intermittence,CarmonaMolchanov94,FoondunKhoshnevisan08Intermittence}.
The weak comparison principle can be derived readily from the Feynman-Kac formula; see \cite{BertiniCancrini94Intermittence}.
But the proof of the strong comparison principle requires some more efforts.
This question is important because, e.g., the {\em Hopf-Cole solution} to the famous {\em Kardar-Parisi-Zhang equation (KPZ) } \cite{KPZ86} is the logarithm of the solution to the SHE.

For general $\rho$ which is Lipschitz continuous, we do not have the Feynman-Kac formula.
The weak comparison principle is no longer obvious.
In this case,
Mueller \cite{Mueller91Support} proves the strong comparison principle for the SHE on $\R$
for the initial data being absolutely continuous with respect to the Lebesgue measure with a bounded density function. Mueller uses the discrete Laplacian and  discretizes time to approximate the
solution to the SHE, which results in the weak comparison principle.
He then obtains the strong comparison principle by employing some large deviation estimates for the stochastic integral part of the solution.
Using Mueller's large deviation estimates, Shiga \cite{Shiga94Two} gives another proof of the strong comparison principle for the initial data being a so-called
$C_{\text{tem}}$ function, that is, a continuous function with both
tails growing no faster than $e^{\lambda|x|}$ for all $\lambda>0$.
He outlines a different approach for proving the weak comparison principle in the
appendix of his paper: smooth both the Laplace operator and the white noise so that one can apply the comparison principle for SDEs. We will follow his approach in our proof for the
weak comparison principle.

In both Mueller \cite{Mueller91Support} and Shiga \cite{Shiga94Two},
the initial data should be functions.
One natural question is whether the solution remains strictly positive if we run the system  \eqref{E:FracHt} starting from a measure, such as the Dirac delta measure.
Using the polymer model and following a convergence result by Alberts, Khanin and Questel \cite{AlbertEtc11},
Moreno Flores \cite{Moreno14Pos} recently  proved the strict positivity result for
the Anderson model (i.e., the case where $a=2$ and $\rho(u)=\lambda u$) with the delta initial data.
Our results below generalize their result to the stochastic fractional heat equation (i.e., $a\in \:]1,2]$), and moreover,
we consider general measure-valued initial data and allow $\rho$ to be any Lipschitz continuous function.

Recently, Conus, Joseph and Khoshnevisan \cite{ConusEct12Corr} give a more precise estimate on the strong comparison principle for the SHE. When the initial data is the Lebesgue measure, they prove
that for every $t>0$, there exist two finite constants $A>0$ and $B>0$ such that for all $\epsilon\in \:]0,1[\:$ and $x\in\R$,
\begin{align}\label{E:R-CJK}
P\left(u(t,x)<\epsilon\right)\le A \exp\left(-B \left[|\log(\epsilon)| \cdot\log\left(|\log(\epsilon)|\right)\right]^{3/2}\right).
\end{align}
Clearly, this result implies the strong comparison principle.
In \cite{MuellerNualart08}, Mueller and Nualart prove that when $a=2$ and the space domain is $[0,1]$ with the zero Dirichlet boundary condition, for some constants $C_0$ and $C_1$,
\begin{align}\label{E:R-MN}
P(u(t,x)<\epsilon)\le C_0\exp\left(-C_1 |\log\epsilon|^{3/2-\epsilon}\right).
\end{align}
We will generalize these results to the stochastic fractional heat equation \eqref{E:FracHt}
following  \cite{ConusEct12Corr}.
This shows how close to zero the solution to \eqref{E:FracHt} can be.

In order to state our results, we need some notation. Let
$\calM\left(\R\right)$ be the set of signed (regular) Borel measures on $\R$.
From the Jordan
decomposition, $\mu=\mu_+-\mu_-$ such that $\mu_\pm$ are two non-negative Borel
measures with disjoint support and denote $|\mu|=\mu_++\mu_-$.
As proved in \cite{ChenDalang14FracHeat},  the admissible initial data for \eqref{E:FracHt} is
\[
\calM_a\left(\R\right):=\left\{
\mu\in\calM(\R):\; \sup_{y\in\R}\int_\R |\mu|(\ud x) \frac{1}{1+|y-x|^{1+a}}<+\infty
\right\},\quad\text{for $a\in \:]1,2]$.}
\]
Moreover, when $a=2$, the admissible initial data can be more general than $\calM_2(\R)$: It can be any measures from the following set
\[
   \calM_H(\R) := \left\{\mu\in\calM(\R)\: :
\int_\R e^{-c x^2} |\mu|(\ud x)<+\infty,\: \text{for all $c>0$}\right\};
\]
see \cite{ChenDalang13Heat}.
Clearly, $\calM_a(\R)\subseteq \calM_H(\R)$.
In the following, a ``$+$" sign in the subscript means the subset of nonnegative
measures.
An important example in $\calM_{a,+}\left(\R\right)$ is the Dirac delta
measure.
For simplicity, denote
\[
\calM_a^*(\R):=\begin{cases}
\calM_a(\R)& \text{if $1<a<2$,}\cr
\calM_H(\R)& \text{if $a=2$}.
\end{cases}
\]

We will follow Shiga's arguments \cite{Shiga94Two} to prove the following weak comparison principle.
This result allows more general initial conditions than those in \cite{Mueller91Support} and \cite{Shiga94Two}.

\begin{theorem}[Weak comparison principle]
\label{T:WComp}
Let $u_1(t,x)$ and $u_2(t,x)$ be two solutions to \eqref{E:FracHt} with the initial data
$\mu_1$ and $\mu_2\in \calM_{a}^*(\R)$, respectively.
If $\mu_1\le \mu_2$, then
\begin{align}\label{E:Com}
P\left(u_1(t,x)\le u_2(t,x),\;\text{for all $t\ge 0$ and $x\in\R$}\right) =1.
\end{align}
\end{theorem}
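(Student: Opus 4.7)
The plan is to follow the strategy outlined by Shiga in the appendix of \cite{Shiga94Two}: regularize both the noise and the fractional operator, reduce the SPDE to a finite-dimensional SDE system for which a classical comparison theorem applies, then pass to the limit. Because we admit measure-valued initial data in $\calM_a^*(\R)$, an extra approximation step in the initial data is also required.

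\textbf{Step 1: reduction to bounded smooth initial data.} Let $\phi_n$ be a standard (nonnegative) mollifier and put $\mu_i^{(n)} := \phi_n * \mu_i$, which are bounded smooth functions satisfying $\mu_1^{(n)} \le \mu_2^{(n)}$ and $\mu_i^{(n)} \to \mu_i$ in an appropriate vague/weighted sense. Let $u_i^{(n)}$ denote the corresponding solution to \eqref{E:FracHt}. The continuity of the solution map on $\calM_a^*(\R)$ established in \cite{ChenDalang14FracHeat, ChenDalang13Heat} gives $u_i^{(n)}(t,x) \to u_i(t,x)$ in $L^p$, uniformly on compact subsets of $\;]0,+\infty[\,\times\R$. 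Hence it suffices to prove the comparison for bounded smooth initial data and then take $n\to\infty$.

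\textbf{Step 2: noise regularization and reduction to SDE.} Let $\psi_\epsilon$ be a spatial mollifier and set $\dot{W}_\epsilon := \psi_\epsilon *_x \dot{W}$, which is smooth in $x$. Restrict to a lattice $h\bbZ$ and approximate $\Dxa$ by a finite-difference operator $D_h$; since $\Dxa$ generates an $a$-stable Markov process, a suitable $D_h$ has nonnegative off-diagonal entries. The regularized and discretized problem becomes a (countable) coupled SDE system
\[
\ud u_i^{\epsilon,h}(t,x_k) = \bigl(D_h u_i^{\epsilon,h}\bigr)(t,x_k)\,\ud t + \rho\bigl(u_i^{\epsilon,h}(t,x_k)\bigr)\,\ud B_k^\epsilon(t),
\]
where each $B_k^\epsilon$ is a Brownian motion built from $\dot{W}_\epsilon$ evaluated at $x_k$, so that the diffusion matrix is diagonal. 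With globally Lipschitz $\rho$ and quasi-monotone drift, the multidimensional comparison principle for SDEs (see e.g.\ \cite[Chapter VI]{IkedaWatanabe89}) yields $u_1^{\epsilon,h}(t,x_k) \le u_2^{\epsilon,h}(t,x_k)$ almost surely for every $t\ge 0$ and $k\in\bbZ$.

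\textbf{Main obstacle.} The most delicate part is the triple passage to the limit ($h\to 0$, then $\epsilon\to 0$, then $n\to\infty$): $L^p$-convergence in probability is not enough, because we need the inequality to hold \emph{simultaneously for all $(t,x)$ almost surely}. This requires uniform-in-$(h,\epsilon,n)$ moment bounds together with joint Hölder estimates in $(t,x)$, so as to obtain tightness in $C(K)$ for compact $K\subset\;]0,+\infty[\,\times\R$ and thus pointwise a.s.\ ordering on a dense set of $(t,x)$ extended by continuity. For $1<a<2$ the stable heat kernel decays only like $|x|^{-(1+a)}$, so spatial tail control hinges on the admissibility condition in $\calM_a(\R)$. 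The Hölder-regularity estimates for measure-valued initial data announced in the abstract are precisely the ingredient needed to promote the lattice-wise ordering to the claim \eqref{E:Com}; verifying that these estimates are uniform in $\epsilon$, $h$ and $n$ will be the main technical burden.
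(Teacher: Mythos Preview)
Your overall strategy matches the paper's---regularize operator and noise, compare at the regularized level, pass to the limit---but the execution diverges in two ways, and your sketch has a real gap.

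First, the paper does \emph{not} discretize to a lattice. Instead it regularizes $\Dxa$ by the Yosida-type approximation $\tlMr{\epsilon}{\delta}{D}{a}=\epsilon^{-1}(\lMr{\delta}{\bold G}{a}(\epsilon)-\bold I)$, whose kernel $\epsilon^{-1}\lMr{\delta}{G}{a}(\epsilon,\cdot)$ is nonnegative. Combined with the mollified noise, this yields for each fixed $x$ a one-parameter SDE in $t$. The comparison at this level is obtained \emph{directly} by applying It\^o's formula to smooth approximations $\Psi_n$ of $x\mapsto -(x\wedge 0)$ and then Gronwall, not by invoking Ikeda--Watanabe. This matters because \cite[Chapter VI]{IkedaWatanabe89} treats one-dimensional SDEs; the comparison theorem for countable systems with diagonal diffusion and quasi-monotone drift that you need is not there, and the correlated family $\{B_k^\epsilon\}$ makes the citation inapposite. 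The paper's direct argument sidesteps this entirely and also removes one of your three limits, collapsing ``$h\to 0$, $\epsilon\to 0$'' into a single $\epsilon\to 0$.

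Second, your Step 1 assumes a continuity result for the solution map on $\calM_a^*(\R)$ that is not available in \cite{ChenDalang14FracHeat,ChenDalang13Heat}; in fact this is precisely Theorem~\ref{T:Approx} of the present paper, proved here for the specific approximants $((\mu\psi_\epsilon)*\lMr{\delta}{G}{a}(\epsilon,\cdot))$, and its proof uses the kernel bounds of Proposition~\ref{P:UpperBdd-K}. So that step cannot be deferred to prior literature. Finally, the passage from ``$u_1(t,x)\le u_2(t,x)$ for each fixed $(t,x)$ a.s.'' to ``for all $(t,x)$ simultaneously a.s.'' is handled in the paper via the H\"older regularity Theorem~\ref{T:Holder}; you correctly identify this ingredient, but note it too is new here for measure-valued initial data with $1<a<2$.
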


Here is one example. Let $\delta_{z}$ be the Dirac delta function with unit mass at $x=z$.
Suppose that $\mu_1=\delta_0$ and $\mu_2=2 \delta_0$. Then $u_1(t,x)\le u_2(t,x)$ for all $t>0$ and $x\in\R$, a.s.

As a direct consequence of Theorem \ref{T:WComp}, one can turn {\it weak intermittency} statements
in \cite{ChenDalang14FracHeat, FoondunKhoshnevisan08Intermittence} into the {\it full intermittency}.
More precisely, define the {\it upper and lower Lyapunov exponents of order $p$} by
\begin{align}\label{E:Lypnv-x}
\overline{m}_p(x) :=\mathop{\lim\sup}_{t\rightarrow+\infty}
\frac{1}{t}\log\E\left(|u(t,x)|^p\right),\quad
\underline{m}_p(x) :=\mathop{\lim\inf}_{t\rightarrow+\infty}
\frac{1}{t}\log\E\left(|u(t,x)|^p\right),
\end{align}
for all $p\ge 2$ and $x\in\R$.
According to Carmona and Molchanov \cite[Definition III.1.1, on p. 55]{CarmonaMolchanov94},
$u$ is {\it fully intermittent} if $\inf_{x\in\R} \underline{m}_2(x)>0$ and $m_1(x)\equiv 0$ for all $x\in\R$.

\begin{corollary}\label{C:FI}
Suppose that $a\in\:]1,2[\:$, $|\delta|< 2-a$ (strict inequality), $\mu\in\calM_{a,+}(\R)$, and $\rho$ satisfies that for some constants $\lip_\rho>0$ and $\vip\ge 0$, $\rho(x)^2\ge
\lip_\rho^2\left(\vip^2+x^2 \right)$ for all $x\in\R$.
If either $\mu\ne 0$ or $\vip\ne 0$, then the solution to \eqref{E:FracHt} is fully intermittent.
\end{corollary}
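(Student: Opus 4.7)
The plan is to upgrade the weak intermittency of \eqref{E:FracHt} (known from \cite{ChenDalang14FracHeat, FoondunKhoshnevisan08Intermittence}) to full intermittency via the weak comparison principle of Theorem~\ref{T:WComp}. Full intermittency consists of two assertions, $\inf_{x\in\R}\underline{m}_{2}(x)>0$ and $m_{1}(x)\equiv 0$, which I would establish separately.

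For $\inf_x \underline{m}_2(x) > 0$, the mild formulation of \eqref{E:FracHt} combined with the coercivity $\rho(x)^2 \ge \lip_\rho^2(\vip^2+x^2)$ yields
\begin{equation*}
\E\bigl[u(t,x)^2\bigr] \ge (p_t \ast \mu)(x)^2 + \lip_\rho^2 \int_0^t \! \int_\R p_{t-s}(x-y)^2 \bigl(\vip^2 + \E[u(s,y)^2]\bigr)\, \ud y\, \ud s,
\end{equation*}
where $p_t$ is the transition density of the $a$-stable process with skewness $\delta$. Standard iteration using $\int_0^t \|p_s\|_{L^2}^2 \, \ud s \sim c\, t^{1-1/a}$ for $a \in \;]1,2[\;$ (the strict inequality $|\delta|<2-a$ is used here to ensure the clean self-similar scaling of $p_t$) together with the non-triviality hypothesis $\mu \ne 0$ or $\vip \ne 0$ yields $\E[u(t,x)^2] \ge c_1 \exp(c_2 t)$ uniformly in $x \in \R$.

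For $m_1(x) \equiv 0$, the starting point is the identity $\E[u(t,x)] = (p_t \ast \mu)(x)$, valid because the It\^o integral against space-time white noise has zero expectation. Applying Theorem~\ref{T:WComp} with $\mu_1 = 0$ and $\mu_2 = \mu$ gives $u \ge u_0$ a.s., where $u_0$ solves \eqref{E:FracHt} with zero initial data; when $\rho(0) = 0$ the trivial process $u_0 \equiv 0$ solves that equation, so $u \ge 0$ and $\E[|u(t,x)|] = (p_t \ast \mu)(x)$. The self-similar bound $p_t(x) \le C t/(1+|x|^{1+a})$ for $t \ge 1$ together with the admissibility condition defining $\calM_{a,+}(\R)$ then gives $(p_t \ast \mu)(x) \le C' t$, so $\overline{m}_1(x) \le 0$. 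The matching bound $\underline{m}_1(x) \ge 0$ follows from $\E[|u(t,x)|] \ge |\E[u(t,x)]| = (p_t \ast \mu)(x) \ge c_0 \, t^{-1/a}$ for nonzero $\mu$ and large $t$, using $t^{1/a} p_t(x-z) \to g(0)$ uniformly on compact sets.

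The main obstacle is the case $\vip \ne 0$: the coercivity forces $\rho(0) \ne 0$, so $u_0 \not\equiv 0$ and $u$ need not be non-negative. For the subcase $\mu \ne 0$ I would invoke the strict positivity of $u$ proved elsewhere in this paper (extending \cite{Moreno14Pos} to the fractional setting with general Lipschitz $\rho$ and measure-valued $\mu$), which still yields $\E[|u|] = (p_t \ast \mu)(x)$ and hence the previous bounds go through. The remaining subcase $\mu = 0$ with $\vip \ne 0$ is the most delicate because $\E[u] \equiv 0$ trivializes the first-moment identity; here one has to control $\E[|u|]$ directly, and I would do so by shifting $v = u + c$ to an auxiliary equation whose solution is controlled by comparison, combined with the quantitative closeness-to-zero estimates of the paper.
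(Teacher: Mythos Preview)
Your core approach matches the paper's two-line proof. For $\inf_x \underline{m}_2(x) > 0$, the paper simply cites \cite[Theorem~3.4]{ChenDalang14FracHeat} rather than re-deriving the renewal iteration you sketch. For $m_1 \equiv 0$, the paper uses Theorem~\ref{T:WComp} exactly as you do---comparing with the zero initial measure to get $u(t,x) \ge 0$ a.s., whence $\E[|u(t,x)|] = \E[u(t,x)] = J_0(t,x)$ and hence $m_1(x) \equiv 0$---without spelling out the two-sided polynomial bounds on $J_0 = p_t * \mu$ that you include.

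Your ``obstacle'' paragraph, however, contains a genuine misstep. The strict-positivity result you plan to invoke when $\mu \ne 0$ is Theorem~\ref{T1:Rates}, and that theorem is stated under the hypothesis $\rho(0)=0$; it is therefore unavailable in precisely the case $\vip > 0$ (which forces $\rho(0) \ne 0$) that you are trying to handle. The shift $v = u + c$ for the subcase $\mu = 0$ does not help either: $v$ solves the equation with diffusion coefficient $x \mapsto \rho(x - c)$, which still fails to vanish at the origin, so comparison against the zero initial datum again does not yield $v \ge 0$. The paper's own proof does not treat the case $\rho(0) \ne 0$ separately---it applies Theorem~\ref{T:WComp} with $\mu_1 = 0$ and moves on---so your concern is legitimate, but neither of your proposed remedies closes the gap.
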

\begin{proof}
By \cite[Theorem 3.4]{ChenDalang14FracHeat}, $\inf_{x\in\R} \underline{m}_2(x)>0$.
By Theorem \ref{T:WComp}, $u(t,x)\ge 0$ a.s. and so, $\E[|u(t,x)|]=\E[u(t,x)]=J_0(t,x)$ (see \eqref{E:J0}).
Therefore, $m_1(x)\equiv 0$ for all $x\in\R$.
\end{proof}

We adapt both Mueller and Shiga's arguments (see \cite{Mueller91Support, Shiga94Two}) to prove the strong comparison principle.
In the proof, following the idea of \cite[Theorem 5.1]{ConusEct12Corr}, we develop a large deviation result similar to \cite{Mueller91Support} using the Kolmogorov continuity theorem.

\begin{theorem}[Strong comparison principle]\label{T:SComp}
Let $u_1(t,x)$ and $u_2(t,x)$ be two solutions to \eqref{E:FracHt} with the initial data
$\mu_1$ and $\mu_2\in \calM_{a}^*(\R)$, respectively.
If $\mu_1< \mu_2$, then
\[
P\left(u_1(t,x)<u_2(t,x)\;\;\text{for all $t>0$ and $x\in\R$}\right) =1.
\]
\end{theorem}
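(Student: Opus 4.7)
\medskip

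\noindent\textbf{Proof proposal.}
The plan is to combine Theorem~\ref{T:WComp} with a large deviation estimate for the noise part of the difference $v(t,x):=u_2(t,x)-u_1(t,x)$, in the spirit of Mueller \cite{Mueller91Support}, Shiga \cite{Shiga94Two}, and Conus--Joseph--Khoshnevisan \cite{ConusEct12Corr}. First, by the weak comparison principle applied to $\mu_1\le\mu_2$, we have $v(t,x)\ge 0$ for all $t\ge 0$ and $x\in\R$, almost surely. Writing the mild form, $v$ decomposes as
\begin{equation*}
v(t,x) = J(t,x) + N(t,x),
\end{equation*}
where $J(t,x):=\int_\R G(t,x-y)\,(\mu_2-\mu_1)(\ud y)$ (with $G$ the fundamental solution of $\partial_t-\Dxa$) and $N(t,x):=\int_0^t\!\!\int_\R G(t-s,x-y)\,\sigma(s,y)\,W(\ud s,\ud y)$, with $\sigma(s,y):=\rho(u_2(s,y))-\rho(u_1(s,y))$. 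Since $\rho$ is Lipschitz and $v\ge 0$, we get the crucial bound $|\sigma(s,y)|\le\Lip_\rho\, v(s,y)$.

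The deterministic step is to observe that the heat kernel $G(t,\cdot)$ of the stable/fractional operator $\Dxa$ is strictly positive on $\R$ for every $t>0$ (it is the transition density of the underlying $a$-stable process). Together with the fact that $\mu_2-\mu_1$ is nonnegative and nonvanishing, this yields $J(t,x)>0$ for every $(t,x)\in\;]0,\infty[\times\R$. Moreover, on any compact rectangle $K=[t_1,t_2]\times[-M,M]$ with $t_1>0$, one has $\eta_K:=\inf_{(t,x)\in K} J(t,x)>0$.

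The main step, and the main obstacle, is a quantitative lower tail estimate of the form
\begin{equation*}
P\Bigl(\inf_{(t,x)\in K} v(t,x) \le \epsilon\Bigr)
\;\le\; A_K \exp\!\Bigl(-B_K\,|\log\epsilon|^{\gamma}\Bigr),
\qquad 0<\epsilon<\epsilon_0,
\end{equation*}
for some $\gamma>1$ and constants $A_K,B_K,\epsilon_0>0$. I would obtain this by bootstrapping on the Lipschitz bound $|\sigma|\le\Lip_\rho v$: if $v$ is uniformly small on a (random) neighborhood, then so is the noise coefficient, so a Picard-type iteration on $\E[|N(t,x)|^p]$ produces high-moment bounds decaying in $\epsilon$ and in $p$. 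The delicate point is to iterate these estimates on shrinking scales without losing the fractional heat kernel's $L^p$ behavior, mimicking \cite[Theorem~5.1]{ConusEct12Corr} but replacing the Gaussian kernel by the $a$-stable kernel; the required spatial/temporal kernel integrals are by now standard from \cite{ChenDalang14FracHeat}. Converting this pointwise tail bound to a uniform one on $K$ is then done by a Kolmogorov chaining argument, for which the H\"older regularity of $v$ (stated in the abstract, generalizing \cite{ChenDalang13Holder}) provides the moment input on spatial and temporal increments.

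Given the uniform tail estimate, strict positivity follows by a Borel--Cantelli / covering argument: choose $\epsilon_n\to 0$ fast enough that $\sum_n A_K\exp(-B_K|\log\epsilon_n|^\gamma)<\infty$, deduce $\inf_K v>0$ almost surely, and then cover $\;]0,\infty[\,\times\R$ by a countable family of such compacts $K$ to conclude that $P(v(t,x)>0\text{ for all }t>0,\;x\in\R)=1$, which is exactly the claim.
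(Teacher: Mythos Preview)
Your overall strategy---reduce to the nonnegative difference $v=u_2-u_1$, use the weak comparison principle to get $v\ge 0$, decompose $v=J+N$, and aim for a quantitative lower tail bound on compacts---matches the paper's framework, and your observation that $|\sigma|\le\Lip_\rho v$ is precisely the statement that $v$ solves the equation with a new nonlinearity $\tilde\rho$ satisfying $\tilde\rho(0)=0$. But the mechanism you propose for the tail bound does not work. You want to ``bootstrap'' $|\sigma|\le\Lip_\rho v$ into a Picard-type moment estimate on $N$ that decays in $\epsilon$; however, $\E[|N(t,x)|^p]$ is controlled by integrals of $\Norm{\sigma(s,y)}_2^2\le\Lip_\rho^2\Norm{v(s,y)}_2^2$, and these second moments are fixed by \eqref{E:MomUp} and have nothing to do with $\epsilon$. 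There is no circularity to exploit here: the event $\{\inf_K v\le\epsilon\}$ does not force $\sigma$ to be small in any $L^p$ sense that feeds back into the moments of $N$.

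What actually produces the tail estimate in the paper (and in \cite{Mueller91Support,Shiga94Two,ConusEct12Corr}) is a \emph{time-discretization scheme} that your outline omits. One partitions $[0,t]$ into $m$ subintervals of length $\tau=t/m$; over each short interval the freshly started noise term satisfies $\E[\sup_S|I|^p]\le C\,\tau^{p\eta(a-1)/(2a)}\exp(C\tau p^{(2a-1)/(a-1)})$ via Kolmogorov continuity (Lemma~\ref{L:LgDiv}), and optimizing in $p$ gives a one-step failure probability $c(m)=\exp(-Q\,m^{1-1/a}[\log m]^{2-1/a})$. The Markov property and weak comparison let one restart from $\beta^k 1_{S_{k-1}^m}$ at each step, propagating a lower bound $v\ge\beta^{k+1}1_{S_k^m}$ inductively; setting $\epsilon=\beta^m$ then yields the tail bound. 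The smallness comes from the short \emph{time scale} $\tau$, not from smallness of the coefficient. A second gap: for general $\mu\in\calM_a^*(\R)$ the induction needs a bounded initial profile, so the paper runs a separate ``Case~II'' argument---restart at a small time $\epsilon>0$, use Theorem~\ref{T:WeakSol} and the H\"older continuity of $x\mapsto u(\epsilon,x)$ (Theorem~\ref{T:Holder}) to find a nontrivial continuous minorant, then invoke Case~I. Your sketch does not address this reduction.
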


The following theorem gives more precise information on the positivity of the solutions.
Let $\spt{f}$ denote the support of function $f$, i.e., $\spt{f}:=\left\{x\in\R: f(x)\ne 0\right\}$.

\begin{theorem}[Strict positivity]\label{T1:Rates}
Suppose $\rho(0)=0$ and let $u(t,x)$ be the solution to \eqref{E:FracHt} with the initial data $\mu\in\calM_a^*(\R)$. Then we have the following two statements:\\
(1) If $\mu>0$, then for any compact set $K\subseteq \R_+^*\times\R$,
there exist finite constants $A>0$ and $B>0$ which only depend on $K$ such that for small enough $\epsilon>0$,
\begin{align}\label{E:Rates1}
P\left(\inf_{(t,x)\in K}u(t,x) <\epsilon \right)&\le A \exp\left(-B |\log(\epsilon)|^{1-1/a} \log\left(|\log(\epsilon)|\right)^{2-1/a}\right).
\end{align}
(2) If  $\mu(\ud x)=f(x)\ud x$ with $f\in C(\R)$, $f(x)\ge 0$ for all $x\in\R$ and $\spt{f}\neq \emptyset$,
then for any compact set $D\subseteq\spt{f}$ and any $T>0$,
there exist finite constants $A>0$ and $B>0$ which only depend on $D$ and $T$ such that for all small enough $\epsilon>0$,
\begin{align}
P\left(\inf_{(t,x)\in \: ]0,T]\times D}u(t,x) <\epsilon\right)\le A \exp\left(-B \left\{ |\log(\epsilon)|\cdot \log\left(|\log(\epsilon)|\right)\right\}^{2-1/a}\right).
\end{align}
\end{theorem}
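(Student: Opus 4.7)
The plan is to follow the strategy of \cite{ConusEct12Corr} adapted to the fractional setting, using the H\"older regularity established earlier in this paper to chain pointwise small-ball estimates into the claimed uniform estimate on compact sets. Decompose $u(t,x) = J_0(t,x) + I(t,x)$, where $J_0(t,x) = \int_\R G_a(t, x-y) \mu(\ud y)$ is the deterministic part and $I(t,x) = \int_0^t \int_\R G_a(t-s, x-y) \rho(u(s,y)) W(\ud s, \ud y)$ is the stochastic convolution. Since $G_a(t, \cdot)$ is a strictly positive continuous density for every $t > 0$ and $\mu$ is nonnegative and nonzero, $J_0$ is strictly positive and continuous on $\R_+^* \times \R$, so in case (1) $\inf_K J_0 \geq \theta > 0$; in case (2), continuity of $f$ extends $J_0$ continuously down to $t=0$ with $J_0(0,\cdot)=f$, and $D \subseteq \spt{f}$ yields $\inf_{\;]0,T] \times D} J_0 \geq \theta > 0$. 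Writing $K^* = K$ or $K^* = \;]0,T] \times D$ accordingly, on the event $\{\inf_{K^*} u < \epsilon\}$ with $\epsilon < \theta/2$, the stochastic part $I$ must drop below $-\theta/2$ somewhere on $K^*$.

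To pass from a supremum over $K^*$ to a family of pointwise events, apply a chaining argument driven by the H\"older regularity of $u$ proved earlier in this paper. Cover $K^*$ with a spacetime grid respecting the diffusive scaling of $G_a$, of temporal mesh $\eta^a$ and spatial mesh $\eta$ and cardinality $N_\eta \lesssim \eta^{-(1+1/a)}$. On $\{\inf_{K^*} u < \epsilon\}$, some grid point $(\hat t, \hat x)$ satisfies $u(\hat t, \hat x) \leq \epsilon + H\, \eta^{\alpha}$, where $H$ is the random H\"older seminorm of $u$ on $K^*$ and $\alpha$ is the joint H\"older exponent. Choosing $\eta = \eta(\epsilon)$ so that $H \eta^{\alpha} \leq \epsilon$ with suitable probability, using the $L^p$-bounds on $H$ from the H\"older regularity theorem, reduces the problem to a union bound over $N_\eta$ pointwise small-ball events $\{u(\hat t, \hat x) < 2\epsilon\}$, modulo a controlled tail for $H$.

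The heart of the proof is the pointwise estimate at a single node. Split $I(\hat t, \hat x) = I_{\mathrm{in}} + I_{\mathrm{out}}$, where $I_{\mathrm{in}}$ integrates over the temporal strip $s \in [\hat t - \tau, \hat t]$ for a parameter $\tau = \tau(\epsilon)$ and $I_{\mathrm{out}}$ is $\calF_{\hat t - \tau}$-measurable. By Burkholder-Davis-Gundy, the conditional $L^p$-norms of $I_{\mathrm{in}}$ are controlled by $\int_{\hat t - \tau}^{\hat t} \int_\R G_a^2 \cdot \E\!\left[\rho(u)^{2} \mid \calF_{\hat t - \tau}\right]$, which, combined with the scaling $\int_0^\tau \int G_a^2 \lesssim \tau^{(a-1)/a}$, $\rho(0)=0$, Lipschitz continuity of $\rho$, and the moment bounds on $u$ from \cite{ChenDalang14FracHeat}, yields conditional Gaussian-type tail bounds for $I_{\mathrm{in}}$. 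Since $u(\hat t, \hat x) = J_0 + I_{\mathrm{out}} + I_{\mathrm{in}}$ and the first two summands are $\calF_{\hat t - \tau}$-measurable, the event $\{u(\hat t, \hat x) < 2\epsilon\}$ forces either $I_{\mathrm{out}}$ to be very negative (unlikely by $L^p$ methods applied to $I_{\mathrm{out}}$) or $I_{\mathrm{in}}$ to take a large negative conditional value (unlikely by the Gaussian bound).

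The main obstacle is the joint optimization of the mesh $\eta$, the splitting time $\tau$, and the moment order $p$ (in Markov's inequality for $H$ and $I_{\mathrm{out}}$), against the threshold $\epsilon$; the $\log\log$ factor arises from the logarithmic optimization of $p$, analogous to \cite[Theorem~5.1]{ConusEct12Corr}. In case (2), continuity and positivity of $f$ on $D$ give uniform moment bounds on $u$ and on $H$ down to $t=0$, which yields the stronger exponent $2 - 1/a$ in line with \eqref{E:R-CJK}; in case (1), where the initial data is only a measure and $K$ must be bounded away from $t=0$, the moment bounds on $\sup_{K^*} u$ and on $H$ are weaker, and the same optimization yields the weaker exponent $1 - 1/a$.
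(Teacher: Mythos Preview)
Your proposal has a genuine gap at the ``heart of the proof,'' the pointwise small-ball estimate $P(u(\hat t,\hat x)<2\epsilon)$. Your decomposition $u=J_0+I_{\mathrm{out}}+I_{\mathrm{in}}$ does not yield a small bound: the term $I_{\mathrm{out}}$ is a mean-zero It\^o integral over $[0,\hat t-\tau]$ whose $L^p$-norm is of order one (not of order $\epsilon$ or $\tau$), so $P(I_{\mathrm{out}}<-\theta/4)$ is bounded away from zero uniformly in $\epsilon$. No choice of $p$ in Markov's inequality will make this probability decay. Consequently the union bound over grid points picks up a constant factor per node and diverges, rather than producing the claimed stretched-exponential rate. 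The $\log\log$ exponent cannot emerge from a one-shot splitting; it is a signature of an \emph{iterative} mechanism.

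The paper's proof uses exactly such an iterative structure, borrowed from Mueller's strong comparison argument. For part~(1) it reuses the inductive scheme in the proof of Theorem~\ref{T:SComp}: partition $[0,t]$ into $m$ slabs and track the event that the solution stays above $\beta^{k}$ times a spreading indicator on the $k$th slab. Lemma~\ref{L:LgDiv} (which \emph{is} the H\"older/Kolmogorov argument you have in mind, but applied to a \emph{single} short slab of length $t/m$ with initial data $1_{[-d,d]}$) gives a one-step failure probability $c(m)=\exp\bigl(-Q\,m^{1-1/a}(\log m)^{2-1/a}\bigr)$; chaining $m$ steps via the Markov property and weak comparison gives $P(\inf_K u<\beta^{m})\le 2m\,c(m)$, and setting $\epsilon=\beta^{m}$ yields the rate in~\eqref{E:Rates1}. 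For part~(2) the paper introduces stopping times $T_k=\inf\{s>T_{k-1}:\inf_{D}v(s,\cdot)\le e^{-k}\}$, rescales at each level so that the restarted solution has initial data $1_D$, and bounds $P(T_k-T_{k-1}\le 2t/n\mid\calF_{T_{k-1}})$ uniformly via the one-step H\"older estimate; the event $\{\inf_{]0,T]\times D}v<e^{-n}\}\subseteq\{T_n\le t\}$ is then controlled by a binomial count of short increments, producing the stronger exponent $(2a-1)/a$. The missing idea in your proposal is this multiplicative cascade through levels $\beta^m$ (resp.\ $e^{-k}$) together with the Markov/strong-Markov restart at each level.
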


Theorem \ref{T1:Rates} shows that for all $t>0$, the function $x\mapsto u(t,x)$ does not have a compact support (see \cite{Mueller91Support} and
\cite[Section 6.3]{Mueller09Tools} for some other scenarios where the compact support property can be preserved). We also note that thanks to Theorem \ref{T1:Rates}, one can regard the solution $u(t,
x)$ to (1.1) as the density at location $x$ of a continuous particle system at time $t$, where particles move as independent $a$-stable processes but branch independently according to the noise term;
see \cite{JosephDavarMueller14Strong}.


Furthermore, Theorem \ref{T1:Rates} implies that for all compact sets
$K\subseteq\R_+^*\times\R$ and all $p>0$, the negative moments exists: $\E\left[|\inf_{(t,x)\in K} u(t,x)|^{-p}\right]<\infty$.
Note that part (2) of  Theorem \ref{T1:Rates} gives essentially the same rate as those in \eqref{E:R-CJK} and \eqref{E:R-MN} when $a=2$.

\bigskip

The next three theorems will be used in the proofs of the above theorems.
Since they are interesting by themselves, we list them below.

The first one, which is used in the  proof of Theorem \ref{T:WComp}, says that we can approximate a solution to
\eqref{E:FracHt} starting from $\mu \in \calM_a^*(\R)$ by a solution to \eqref{E:FracHt} starting from smooth initial conditions.
Define
\begin{align}\label{E:psi}
\psi_{\epsilon}(x)=
\begin{cases}
1&\text{if $|x|\le 1/\epsilon$},\cr
1+1/\epsilon-|x| &\text{if $1/\epsilon\le |x|\le 1+1/\epsilon$},\cr
0 &\text{if $|x|\ge 1+1/\epsilon$}.
\end{cases}
\end{align}

\begin{theorem}\label{T:Approx}
Suppose that $\mu\in\calM_a^*(\R)$.
Let $u(t,x)$ and $u_{\epsilon}(t,x)$ be the solutions to \eqref{E:FracHt} starting from
$\mu$ and
$((\mu\:\psi_\epsilon)*\lMr{\delta}{G}{a}( \epsilon,\cdot))(x)$, respectively. Then
\[
\lim_{\epsilon\rightarrow 0}\E\left[|u(t,x)-u_{\epsilon}(t,x)|^2\right] =0,\quad \text{for all $t>0$ and $x\in\R$.}
\]
\end{theorem}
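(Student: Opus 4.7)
The plan is to put both $u$ and $u_\epsilon$ in mild form, reduce the claim to pointwise convergence of the deterministic (homogeneous) parts, and then propagate this convergence through the Walsh stochastic integral via Dalang's iterative $L^2$ estimate.

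First, by the semigroup property $\lMr{\delta}{G}{a}(\epsilon,\cdot)*\lMr{\delta}{G}{a}(t,\cdot)=\lMr{\delta}{G}{a}(t+\epsilon,\cdot)$, the deterministic part of $u_\epsilon$ becomes
$$J_0^\epsilon(t,x)=\int_\R \lMr{\delta}{G}{a}(t+\epsilon,x-y)\,\psi_\epsilon(y)\,\mu(\ud y),$$
whereas $u$ has deterministic part $J_0(t,x)=\int_\R \lMr{\delta}{G}{a}(t,x-y)\,\mu(\ud y)$. I would show $J_0^\epsilon(t,x)\to J_0(t,x)$ as $\epsilon\to 0^+$ by dominated convergence: $\psi_\epsilon\uparrow 1$, the kernel is continuous in time, and for $\epsilon$ small the integrand is dominated by a constant multiple of $\lMr{\delta}{G}{a}(t/2,x-y)$, which is integrable against $|\mu|$ precisely because $\mu\in\calM_a^*(\R)$.

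Next, writing $v_\epsilon=u-u_\epsilon$ and using the mild formulation together with the Walsh--It\^o isometry and the Lipschitz constant $L$ of $\rho$,
$$F_\epsilon(t,x):=\E\bigl[v_\epsilon(t,x)^2\bigr]\le 2|J_0(t,x)-J_0^\epsilon(t,x)|^2+2L^2\int_0^t\!\!\int_\R \lMr{\delta}{G}{a}(t-s,x-y)^2 F_\epsilon(s,y)\,\ud s\,\ud y.$$
Iterating (Dalang's scheme converges because $\int_0^t\!\int_\R \lMr{\delta}{G}{a}(s,y)^2\,\ud s\,\ud y<\infty$ for $a>1$) yields
$$F_\epsilon(t,x)\le 2\sum_{n\ge 0}(2L^2)^n\bigl(|J_0-J_0^\epsilon|^2 *_n \lMr{\delta}{G}{a}^2\bigr)(t,x),$$
where $*_n$ denotes the $n$-fold space--time convolution. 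Then I would pass $\epsilon\to 0$ inside the series by dominated convergence: each term tends to zero pointwise by the first step, and a summable majorant is obtained from the uniform envelope $J_0^\epsilon\le C\,J_0(t/2,\cdot)$ together with the a priori second-moment bounds for measure-valued initial data established in \cite{ChenDalang14FracHeat}.

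The main obstacle I anticipate is this last step: producing a uniform-in-$\epsilon$ integrable majorant for $|J_0-J_0^\epsilon|^2$ after $n$-fold convolution with $\lMr{\delta}{G}{a}^2$, simultaneously in $n$. This is most delicate in the borderline case $a=2$ with $\mu\in\calM_H(\R)\setminus\calM_a(\R)$, where the Gaussian tails of $J_0$ only narrowly balance the admissibility condition on $\mu$; the truncation $\psi_\epsilon$ is precisely what lets us approximate such $\mu$ by finite measures without breaking this balance.
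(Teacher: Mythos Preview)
Your proposal is correct and is essentially the paper's own argument: both put the solutions in mild form, reduce to the pointwise convergence $J_0^\epsilon\to J_0$ of the homogeneous parts, derive the integral inequality $F_\epsilon\le I_\epsilon+\LIP_\rho^2\bigl(F_\epsilon\star\lMr{\delta}{G}{a}^2\bigr)$, and close it through the resolvent kernel $\calK=\sum_n\calL_n$ with dominated convergence. The only organizational difference is that you iterate directly to obtain $F_\epsilon\le 2\,I_\epsilon+2\,I_\epsilon\star\calK$ and dominate via the envelope $|J_0^\epsilon|\le C\,(|\mu|*\lMr{\delta}{G}{a}(t/2,\cdot))$ together with the moment bound \eqref{E:MomUp}, whereas the paper convolves the inequality with $\widetilde{\calK}$, uses the identity $\LIP_\rho^2\,\lMr{\delta}{G}{a}^2\star\widetilde{\calK}=\widetilde{\calK}-\LIP_\rho^2\lMr{\delta}{G}{a}^2$, and then controls $I_\epsilon\star\widetilde{\calK}$ via the explicit bound on $\calK$ in Proposition~\ref{P:UpperBdd-K} and the estimate $(|\mu|*\lMr{\delta}{G}{a}(t,\cdot))(x)\le C_T\,t^{-1/a}$.
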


The following theorem, which is used in the proof of Theorem \ref{T:SComp}, shows the
sample-path regularity for the solutions to \eqref{E:FracHt}.
When the initial data has a bounded density, this has been proved in \cite{DebbiDozzi05On}.
For general initial data, the case where $a=2$ is proved in \cite{ChenDalang13Holder}.
The theorem below covers the cases where $1<a<2$.
We need some notation: Given a subset $K\subseteq \R_+\times\R$ and positive constants
$\beta_1,\beta_2$, denote by $C_{\beta_1,\beta_2}(K)$ the set of functions $v:
\R_+ \times \R \mapsto \R$ with the property that for each compact set
$D \subseteq K$, there is a finite constant $C$ such that for all $(t,x)$
and $(s,y)$ in $D$,
\[
   \vert v(t,x) - v(s,y) \vert \leq C \left[\vert t-s\vert^{\beta_1} + \vert x-y \vert^{\beta_2}\right].
\]
Denote
\[
   C_{\beta_1-,\beta_2-}(D) := \cap_{\alpha_1\in \;\left]0,\beta_1\right[} \cap_{\alpha_2\in \;\left]0,\beta_2\right[} C_{\alpha_1,\alpha_2}(D)\;.
\]

\begin{theorem}\label{T:Holder}
Let $u(t,x)$ be  the solution to \eqref{E:FracHt} starting from $\mu\in\calM_a^*(\R)$. 
Then we have 
\[
u\in
C_{\frac{a-1}{2a}-,\frac{a-1}{2}-}\left(\R_+^*\times\R\right),\;\;\text{a.s.}
\]
\end{theorem}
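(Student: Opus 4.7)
The plan is to apply Kolmogorov's continuity theorem to the mild formulation $u(t,x) = J_0(t,x) + I(t,x)$, where $J_0(t,x) := \int_\R \lMr{\delta}{G}{a}(t,x-y)\,\mu(\ud y)$ and $I(t,x) := \int_0^t\!\int_\R \lMr{\delta}{G}{a}(t-r,x-z)\,\rho(u(r,z))\, W(\ud r,\ud z)$. Fix a compact $K\subseteq \R_+^*\times\R$. Since $\lMr{\delta}{G}{a}(t,\cdot)$ is $C^\infty$ with all derivatives integrable against any $\mu\in\calM_a^*(\R)$ provided $t$ is bounded away from $0$, differentiating under the integral shows $J_0\in C^\infty(K)$; its regularity is therefore far better than claimed, and only the stochastic convolution $I$ requires real work.

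For $I$, fix $p\ge 2$ and apply, in sequence, the Burkholder--Davis--Gundy inequality, Minkowski's inequality in $L^{p/2}(\Omega)$, and the Lipschitz bound $|\rho(z)|\le C(1+|z|)$, together with the uniform moment estimate $\sup_{(r,z)\in K'}\E[|u(r,z)|^p]<\infty$ on a compact $K'\subseteq \R_+^*\times\R$ slightly larger than $K$ (available from \cite{ChenDalang14FracHeat}). This reduces the problem to a purely deterministic bound on
\[
\int_0^{T}\!\int_\R \bigl(\lMr{\delta}{G}{a}(t-r,x-z)\ind{r<t} - \lMr{\delta}{G}{a}(s-r,y-z)\ind{r<s}\bigr)^2 \ud z\,\ud r.
\]

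Splitting this via the triangle inequality into a purely spatial and a purely temporal part, I would establish the two kernel estimates
\[
\int_0^T\!\int_\R \bigl(\lMr{\delta}{G}{a}(r,x-z) - \lMr{\delta}{G}{a}(r,y-z)\bigr)^2 \ud z\,\ud r \le C|x-y|^{a-1}
\]
and an analogous bound $\le C|t-s|^{(a-1)/a}$ for the time-increment term. Both follow from Plancherel's identity, since $|\widehat{\lMr{\delta}{G}{a}}(r,\xi)|$ is of the form $e^{-cr|\xi|^a}$ (the skewness $\delta$ contributes only a phase factor), combined with the pointwise bound $|1-e^{ih\xi}|^2\le\min(4,|h\xi|^2)$ and the scaling relation $\lMr{\delta}{G}{a}(t,x) = t^{-1/a}\lMr{\delta}{G}{a}(1,t^{-1/a}x)$; the convergence of the resulting $\xi$-integrals uses precisely the range $1<a\le 2$. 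Assembling everything yields
\[
\E\bigl[|I(t,x)-I(s,y)|^p\bigr] \le C_p\bigl(|t-s|^{(a-1)/a} + |x-y|^{a-1}\bigr)^{p/2},
\]
so the anisotropic Kolmogorov continuity theorem gives local Hölder exponents strictly below $\frac{a-1}{2a} - \frac{c}{p}$ in time and $\frac{a-1}{2} - \frac{c}{p}$ in space; sending $p\to\infty$ concludes the proof.

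The main technical obstacle is the degeneracy of moment bounds as $t\to 0^+$ under measure-valued initial data (for $\mu=\delta_0$, $\E[|u(t,0)|^p]$ blows up as $t\to 0$), which forces the entire argument to be localized to compacts $K\subset \R_+^*\times\R$, with constants depending on $K$ and $K'$. A secondary difficulty is that for $a<2$ the kernel $\lMr{\delta}{G}{a}$ has no closed form and only polynomial (rather than Gaussian) tails, so the kernel-difference estimates must be carried out via Fourier analysis and the scaling law, rather than through the direct computations available in the Gaussian case $a=2$ treated in \cite{ChenDalang13Holder}.
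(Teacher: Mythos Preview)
Your overall strategy---BDG plus kernel-difference estimates plus Kolmogorov---is the right one, and your Fourier/scaling computation of the deterministic kernel integrals is essentially what \cite[Proposition~4.4]{ChenDalang14FracHeat} gives. But there is a genuine gap in how you invoke the moment bound. After BDG and Minkowski you obtain
\[
\Norm{I(t,x)-I(s,y)}_p^2 \;\le\; C_p \int_0^{t\vee s}\!\!\int_\R \bigl(\Delta G(r,z)\bigr)^2\,\Norm{u(r,z)}_p^2\,\ud z\,\ud r,
\]
and the domain of integration is $[0,t\vee s]\times\R$, not any compact $K'\subseteq\R_+^*\times\R$. Restricting $(t,x),(s,y)$ to a compact $K$ does nothing to keep $r$ away from $0$ in this integral. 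For measure-valued $\mu$ one has, e.g.\ from Lemma~\ref{L:MBds}(1), $\sup_{z}\Norm{u(r,z)}_p^2 \asymp r^{-2/a}$ as $r\downarrow 0$, and since $2/a\ge 1$ this factor is not integrable; you cannot simply pull it out and reduce to the clean deterministic integral you wrote. Your final paragraph flags the blow-up but the remedy you name (``localize to compacts $K$, $K'$'') does not touch the actual obstruction.

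The paper's proof sidesteps this via a time shift: for fixed $\epsilon>0$ it sets $V(t,x):=u(t+\epsilon,x)$ and uses the Markov property to rewrite $V$ in mild form with initial datum $u(\epsilon,\cdot)$ and time-shifted noise,
\[
V(t,x)=\bigl(u(\epsilon,\cdot)\ast\lMr{\delta}{G}{a}(t,\cdot)\bigr)(x)+\widetilde I(t,x),
\]
so that the stochastic convolution $\widetilde I$ involves only $\Norm{V(r,z)}_p=\Norm{u(r+\epsilon,z)}_p$, which is bounded uniformly in $(r,z)\in[0,T-\epsilon]\times\R$. With that uniform bound in hand, your kernel-difference estimates apply verbatim to $\widetilde I$, and the new ``initial'' piece $(u(\epsilon,\cdot)\ast\lMr{\delta}{G}{a}(t,\cdot))(x)$ is handled by Minkowski together with the $L^1$ increment bounds \eqref{E:timediff} and \eqref{E:GL1}. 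Varying $\epsilon>0$ covers all of $\R_+^*\times\R$. The missing ingredient in your outline is precisely this restart-at-time-$\epsilon$ step.
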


The last one, which is also used in the proof of Theorem \ref{T:SComp},
shows that the solution $u(t,x)$ to \eqref{E:FracHt} converges to the initial measure $\mu$
in the weak sense as $t\rightarrow 0$.
The case when $a=2$ is proved in \cite[Proposition 3.4]{ChenDalang13Holder}.
Let $C_c(\R)$ be the set of continuous functions with compact support
and $\InPrd{f,g}$ be the inner product in $L^2(\R)$.

\begin{theorem}\label{T:WeakSol}
Let $u(t,x)$ be  the solution to \eqref{E:FracHt} starting from $\mu\in\calM_a^*(\R)$. Then,
\[
\lim_{t\rightarrow 0}\InPrd{u(t,\circ),\phi} \stackrel{L^2(\Omega)}{=} \InPrd{\mu,\phi}\quad\text{for all $\phi\in C_c(\R)$.}
\]
\end{theorem}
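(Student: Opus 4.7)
The plan is to decompose $u(t,x) = J_0(t,x) + I(t,x)$ into the deterministic piece
\[
J_0(t,x) = \int_\R \lMr{\delta}{G}{a}(t,x-y)\,\mu(\ud y)
\]
and the stochastic convolution
\[
I(t,x) = \int_0^t\!\!\int_\R \lMr{\delta}{G}{a}(t-s,x-y)\,\rho(u(s,y))\,W(\ud s,\ud y),
\]
where $\lMr{\delta}{G}{a}$ is the Green kernel of $\partial_t - \Dxa$. Since $\InPrd{u(t,\cdot),\phi} = \InPrd{J_0(t,\cdot),\phi} + \InPrd{I(t,\cdot),\phi}$ and the first summand is deterministic, the claim reduces to showing $\InPrd{J_0(t,\cdot),\phi} \to \InPrd{\mu,\phi}$ as a numerical limit and $\E[\InPrd{I(t,\cdot),\phi}^2] \to 0$ as $t \downarrow 0$.

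For the deterministic piece, classical Fubini yields
\[
\InPrd{J_0(t,\cdot),\phi} = \int_\R (\phi * \tilde G_t)(y)\,\mu(\ud y),\qquad \tilde G_t(z) := \lMr{\delta}{G}{a}(t,-z),
\]
where $\tilde G_t$ is itself a probability density (of the $a$-stable process with skewness $-\delta$). Continuity of $\phi$ gives $(\phi * \tilde G_t)(y) \to \phi(y)$ pointwise as $t \downarrow 0$. The standard tail bound $\lMr{\delta}{G}{a}(t,z) \le C_T(1+|z|^{1+a})^{-1}$ for $t \in (0,T]$ (replaced by a Gaussian bound when $a=2$), combined with the admissibility of $\mu \in \calM_a^*(\R)$, produces a $|\mu|$-integrable dominating function. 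Dominated convergence closes this half.

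For the stochastic piece I would apply the stochastic Fubini theorem to rewrite
\[
\InPrd{I(t,\cdot),\phi} = \int_0^t\!\!\int_\R \Psi(t,s,z)\,\rho(u(s,z))\,W(\ud s,\ud z),\qquad \Psi(t,s,z) := (\phi * \tilde G_{t-s})(z),
\]
and then use Walsh's $L^2$-isometry to obtain
\[
\E\bigl[\InPrd{I(t,\cdot),\phi}^2\bigr] = \int_0^t\!\ud s\int_\R \Psi(t,s,z)^2\,\E\bigl[\rho(u(s,z))^2\bigr]\,\ud z.
\]
Now $\rho(y)^2 \le 2\rho(0)^2 + 2\Lip_\rho^2 y^2$, and the second-moment estimates from \cite{ChenDalang14FracHeat} give $\E[u(s,z)^2] \le C(J_0(s,z)^2 + H(s,z))$ with a locally bounded residual $H$. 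Combined with Young's convolution inequality $\|\Psi(t,s,\cdot)\|_p \le \|\phi\|_p$ for $p \in [1,\infty]$, the ``$\rho(0)^2$'' and ``$H$'' contributions are at most $O(t)$ as $t \to 0$; only the term $\int_0^t\!\ud s\int_\R \Psi(t,s,z)^2 J_0(s,z)^2\,\ud z$ requires genuine work.

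The main obstacle is this last term when $\mu$ is singular: $J_0(s,\cdot)^2$ blows up pointwise as $s \downarrow 0$ (for $\mu = \delta_0$ one has $J_0(s,0)^2 \sim s^{-2/a}$). The resolution uses the scaling $\lMr{\delta}{G}{a}(s,z) = s^{-1/a}\bar G(s^{-1/a}z)$ of the stable kernel, which yields $\|J_0(s,\cdot)\|_2^2 \le C s^{-1/a}$ for every finite $\mu$ (via Young's inequality with a measure); since $a > 1$, the time integral $\int_0^t s^{-1/a}\,\ud s = \frac{a}{a-1}t^{1-1/a}$ vanishes as $t \to 0$, and combined with $\|\Psi\|_\infty \le \|\phi\|_\infty$ this closes the estimate for singular $\mu$. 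For $\mu$ with a bounded density one swaps roles and uses $\|J_0(s,\cdot)\|_\infty < \infty$ together with $\|\Psi(t,s,\cdot)\|_2 \le \|\phi\|_2$; a localization of a general $\mu \in \calM_a^*(\R)$ into its mass near $\mathrm{supp}(\phi)$ and its tail (controlled via the decay of $\Psi$) combines the two cases.
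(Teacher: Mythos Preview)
Your overall architecture matches the paper's: decompose $u=J_0+I$, handle $\InPrd{J_0(t,\cdot),\phi}\to\InPrd{\mu,\phi}$ by dominated convergence, and reduce $\InPrd{I(t,\cdot),\phi}\to 0$ via stochastic Fubini and the Walsh isometry, followed by the second--moment bound \eqref{E:MomUp}. Where you diverge is in the treatment of the key term $\int_0^t\!\int_\R \Psi^2 J_0^2$. The paper's device is to dominate $|\phi(x)|\le C\,\lMr{\delta}{G}{a}(1,x)$ and use the semigroup property twice: first $\Psi(t,s,y)\le C\,\lMr{\delta}{G}{a}(t+1-s,y)$, then one factor of $J_0$ is peeled off via the pointwise bound $J_0(s,y)\le C s^{-1/a}$ (valid for every $\mu\in\calM_a(\R)$), and the remaining $\int_\R J_0(s,y)\,\lMr{\delta}{G}{a}(t+1-s,y)\,\ud y=J_0(t+1,0)$ collapses by semigroup. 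This handles general $\mu\in\calM_a^*(\R)$ in one stroke, with no localization. Your route via $\Norm{\Psi}_\infty\cdot\Norm{J_0(s,\cdot)}_2^2\le C s^{-1/a}$ is cleaner when $\mu$ is finite, but the passage to general $\mu$ is where the real work hides.

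Two concrete gaps. First, the ``locally bounded residual $H$'' claim is not correct: by \eqref{E:MomUp} the residual is $(\Vip^2+J_0^2)\star\overline{\calK}$, which inherits the same $J_0^2$ singularity as the leading term and is \emph{not} locally bounded for singular $\mu$; it requires the same analysis (in the paper this is the term $L_2(t)$, handled by the same semigroup argument together with \eqref{E:UpBd-K}). Second, the localization sketch for general $\mu$ is incomplete: splitting $\mu=\mu_1+\mu_2$ splits $J_0$ but not $u$, and for the tail piece $\mu_2$ (e.g.\ Lebesgue measure restricted to $\{|y|>R\}$) one has $\Norm{J_0^{(2)}(s,\cdot)}_2=+\infty$, so your $L^2$--$L^\infty$ pairing fails; you would need to exploit the stable--tail decay of $\Psi(t,s,\cdot)$ quantitatively, which you have not done. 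A minor point: the stated uniform bound $\lMr{\delta}{G}{a}(t,z)\le C_T(1+|z|^{1+a})^{-1}$ on $(0,T]$ is false at $z=0$; what you actually need (and what holds) is such a bound on $(\phi*\tilde G_t)(y)$, obtained from $\Norm{\phi*\tilde G_t}_\infty\le\Norm{\phi}_\infty$ together with the compact support of $\phi$.
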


In the following,
we first list some notation and preliminary results in Section \ref{S:Pre}.
Then we prove Theorem \ref{T:WComp} in Section \ref{S:WComp} with many
technical lemmas proved in the Appendix.
The proof of Theorem \ref{T:SComp} is presented in Section \ref{S:Scomp},
Theorem \ref{T1:Rates}  is proved in Section \ref{S:Rates}.
Finally, the three Theorems \ref{T:Approx}, \ref{T:Holder} and \ref{T:WeakSol} are proved in
Sections \ref{S:Approx}, \ref{S:Holder}, and \ref{S:WeakSol}, respectively.

\section{Notation and some preliminaries}\label{S:Pre}
The Green function associated to the
problem
\eqref{E:FracHt} is
\begin{align}\label{E:Green}
\lMr{\delta}{G}{a}(t,x) := \calF^{-1}
\left[\exp\left\{\lMr{\delta}{\psi}{a}(\cdot) t\right\} \right](x)
= \frac{1}{2\pi}\int_\R \ud \xi\: \exp\left\{i \xi x - t |\xi|^a
e^{-i \delta \pi\: \sgn(\xi)/2}\right\},
\end{align}
where $\calF^{-1}$ is the inverse Fourier transform and
\[
\lMr{\delta}{\psi}{a}(\xi) = -|\xi|^a e^{-i \delta \pi\:
\sgn(\xi)/2}\;.
\]
Denote the solution to the homogeneous equation
\begin{align}\label{E:FracHt-H}
 \begin{cases}
  \left(\displaystyle\frac{\partial}{\partial t} - \Dxa \right) u(t,x) = 0,&
t\in \R_+^*\;,\: x\in\R,\cr
u(0,\cdot) = \mu(\cdot),
 \end{cases}
\end{align}
by
\begin{align}\label{E:J0}
J_0(t,x) := \left(\lMr{\delta}{G}{a}(t,\cdot)*\mu\right)(x)
=\int_\R \mu(\ud y)\:\lMr{\delta}{G}{a}(t,x-y),
\end{align}
where ``$*$'' denotes the convolution in the space variable.

Following notation in \cite{ChenDalang13Heat}, let $W=\left\{
W_t(A),\, A\in\calB_b(\R),\, t\ge 0 \right\}$
be a space-time white noise
defined on a probability space $(\Omega,\calF,P)$, where
$\calB_b\left(\R\right)$ is the
collection of Borel sets with finite Lebesgue measure.
Let  $(\calF^0_t,\, t\ge 0)$ be the natural filtration generated by $W$ and augmented by the $\sigma$-field $\calN$ generated
by all $P$-null sets in $\calF$:
\[
\calF_t^0 = \sigma\left(W_s(A):0\le s\le
t,A\in\calB_b\left(\R\right)\right)\vee
\calN,\quad t\ge 0.
\]
Define $\calF_t := \calF_{t+}^0 = \wedge_{s>t}\calF_s^0$ for $t\ge 0$.
In the following, we fix this filtered
probability space $\left\{\Omega,\calF,\{\calF_t:t\ge0\},P\right\}$.
We use $\Norm{\cdot}_p$ to denote the
$L^p(\Omega)$-norm ($p\ge 1$).

The rigorous meaning of the SPDE \eqref{E:FracHt} is the integral (mild) form
\begin{equation}
\label{E:WalshSI}
 \begin{aligned}
  u(t,x) &= J_0(t,x)+I(t,x),\quad\text{where}\cr
I(t,x) &=\iint_{[0,t]\times\R}
\lMr{\delta}{G}{a}\left(t-s,x-y\right)\rho\left(u(s,y)\right)W(\ud s,\ud
y),
 \end{aligned}
\end{equation}
where the stochastic integral is the Walsh integral \cite{Walsh86}.

\begin{definition}\label{DF:Solution}
A process $u=\left(u(t,x),\:(t,x)\in\R_+^*\times\R\right)$  is called a {\it
random field solution} to
\eqref{E:FracHt} if:
\begin{enumerate}[(1)]
 \item $u$ is adapted, i.e., for all $(t,x)\in\R_+^*\times\R$, $u(t,x)$ is
$\calF_t$-measurable;
\item $u$ is jointly measurable with respect to
$\calB\left(\R_+^*\times\R\right)\times\calF$;
\item $\left(\lMr{\delta}{G}{a}^2 \star \Norm{\rho(u)}_2^2\right)(t,x)<+\infty$
for all $(t,x)\in\R_+^*\times\R$,
where ``$\star$'' denotes the simultaneous
convolution in both space and time
variables. Moreover, the function
$(t,x)\mapsto I(t,x)$ mapping $\R_+^*\times\R$ into
$L^2(\Omega)$ is continuous;
\item $u$ satisfies \eqref{E:WalshSI} a.s.,
for all $(t,x)\in\R_+^*\times\R$.
\end{enumerate}
\end{definition}

Throughout the paper, we assume that the function $\rho:\R\mapsto \R$ is Lipschitz
continuous with Lipschitz constant $\LIP_\rho>0$, and moreover, for some constants $\Lip_\rho>0$ and $\Vip \ge 0$,
\begin{align}\label{E:LinGrow}
|\rho(x)|^2 \le \Lip_\rho^2 \left(\Vip^2 +x^2\right),\qquad \text{for
all $x\in\R$}.
\end{align}
Note that the above growth condition \eqref{E:LinGrow} is a consequence of $\rho$ being Lipschitz continuous.

Let $a^*$ be the dual of $a$, i.e., $1/a+1/a^*=1$.
The following constant is finite:
\begin{align}\label{E:Cst-dLa}
\Lambda=\lMr{\delta}{\Lambda}{a} := \sup_{x\in\R} \lMr{\delta}{G}{a}(1,x)\;,
\end{align}
and in particular, $\lMr{0}{\Lambda}{a}=
\pi^{-1}\Gamma\left(1+1/a\right)$; see \cite[(3.10)]{ChenDalang14FracHeat}.
In the following, we often omit the dependence of this constant on $\delta$ and
$a$ and simply write $\lMr{\delta}{\Lambda}{a}$ as $\Lambda$.
This rule will also apply to other constants.

For all $(t,x)\in\R_+^*\times \R$, $n\in\bbN$ and $\lambda\in\R$, define
\begin{align}
\notag
\calL_0\left(t,x;\lambda\right) &:= \lambda^2 \lMr{\delta}{G}{a}^2(t,x)  \\
\label{E:Ln}
\calL_n\left(t,x;\lambda\right)&:=
\underbrace{\left(\calL_0\star \cdots\star\calL_0\right)}_{\text{$n+1$
factors } \calL_0(\cdot,\circ;\lambda)}
,\quad\text{for $n\ge 1$,}\\
\calK\left(t,x;\lambda\right)&:= \sum_{n=0}^\infty
\calL_n\left(t,x;\lambda\right).
\label{E:K}
\end{align}
We apply the following conventions to $\calK(t,x;\lambda)$:
\begin{align*}
 \calK(t,x) := \calK(t,x;\lambda),\quad
\overline{\calK}(t,x) := \calK\left(t,x;\Lip_\rho\right),\quad
\widehat{\calK}_p(t,x) := \calK\left(t,x;4\sqrt{p} \Lip_\rho\right),\quad\text{for $p\ge 2$}.
\end{align*}
%
%
The following theorem is from \cite[Theorem 3.1]{ChenDalang14FracHeat} for $1<a<2$ and \cite[Theorem 2.4]{ChenDalang13Heat} for $a=2$.

\begin{theorem}[Existence,uniqueness and moments]\label{T:ExUni}
Suppose that $\mu\in \calM_a^*\left(\R\right)$, and $\rho$ is Lipschitz continuous and satisfies \eqref{E:LinGrow}.
Then the SPDE \eqref{E:FracHt} has a unique (in the sense of versions) random
field solution $\{u(t,x)\!: (t,x)\in\R_+^* \times \R \}$ starting from $\mu$. Moreover, for all even integers $p\ge 2$, all $t>0$ and $x\in\R$,
\begin{align}\label{E:MomUp}
 \Norm{u(t,x)}_p^2 \le
\begin{cases}
 J_0^2(t,x) + \left(\left[\Vip^2+J_0^2\right] \star \overline{\calK} \right)
(t,x),& \text{if $p=2$}\;,\cr
2J_0^2(t,x) + \left(\left[\Vip^2+2J_0^2\right] \star \widehat{\calK}_p \right)
(t,x),& \text{if $p>2$}\;.
\end{cases}
\end{align}
\end{theorem}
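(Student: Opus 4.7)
The plan is a standard Picard iteration in the Walsh stochastic integral framework. Set $u_0(t,x):=J_0(t,x)$ and define inductively
$$u_{n+1}(t,x):=J_0(t,x)+\iint_{[0,t]\times\R}\lMr{\delta}{G}{a}(t-s,x-y)\,\rho(u_n(s,y))\,W(\ud s,\ud y).$$
First one verifies that $J_0$ is well defined and locally bounded on $\R_+^*\times\R$: this is exactly the purpose of the admissibility class $\calM_a^*(\R)$, whose tail condition $\sup_y\int(1+|y-x|^{1+a})^{-1}|\mu|(\ud x)<\infty$ (or its Gaussian analogue when $a=2$) balances the tails of $\mu$ against the spatial decay of $\lMr{\delta}{G}{a}(t,\cdot)$. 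Combined with the scaling identity $\lMr{\delta}{G}{a}(t,x)=t^{-1/a}\lMr{\delta}{G}{a}(1,t^{-1/a}x)$ and the uniform bound \eqref{E:Cst-dLa}, this yields $\lMr{\delta}{G}{a}^2\in L^1([0,T]\times\R)$ for every $T<\infty$, which drives all the estimates below.

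Applying the Burkholder--Davis--Gundy inequality together with the growth bound \eqref{E:LinGrow} to the stochastic integral yields, for every even $p\ge 2$, an absolute constant $z_p$ (with $z_2=1$ and $z_p$ of order $p$ for $p>2$) such that
$$\Norm{u_{n+1}(t,x)}_p^2\le c_p\, J_0^2(t,x)+z_p\,\Lip_\rho^2\iint_{[0,t]\times\R}\lMr{\delta}{G}{a}^2(t-s,x-y)\bigl(\Vip^2+\Norm{u_n(s,y)}_p^2\bigr)\,\ud s\,\ud y,$$
with $c_p=1$ when $p=2$ and $c_p=2$ otherwise. Iterating this recursion produces a convolution series whose $n$-th term is precisely $\calL_n(\cdot,\circ;\lambda)$ with $\lambda=\sqrt{z_p}\,\Lip_\rho$, so the limiting majorant coincides with the right-hand side of \eqref{E:MomUp}, once one verifies that the series $\calK$ defined by \eqref{E:K} converges on compact subsets of $\R_+^*\times\R$. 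The latter convergence follows from the Beta-function type decay of $\calL_n$ induced by the $t^{-1/a}$ scaling of $\lMr{\delta}{G}{a}$.

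The Cauchy property of the iterates in $L^p$ follows from the identical convolution estimate applied to $\Norm{u_{n+1}-u_n}_p^2$, using the Lipschitz property of $\rho$ in place of \eqref{E:LinGrow}; the limit $u$ then satisfies \eqref{E:WalshSI}, and uniqueness is obtained by running the same Gronwall-type bound on the difference of two putative solutions. Adaptedness, joint measurability, and the $L^2(\Omega)$-continuity of $(t,x)\mapsto I(t,x)$ required in Definition \ref{DF:Solution} are then standard consequences of the dominating kernel together with dominated convergence. The main obstacle, and what distinguishes this from the classical bounded-initial-data theory, is that $J_0$ typically blows up as $t\downarrow 0$---for example $J_0(t,x)=\lMr{\delta}{G}{a}(t,x)$ when $\mu=\delta_0$---so the estimates cannot be uniform and must be carried pointwise in $(t,x)$, implicitly weighted by $J_0^2$; this is the reason the moment bound in \eqref{E:MomUp} is phrased as a convolution $J_0^2\star\calK$ rather than a single constant. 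The case $a=2$ with $\mu\in\calM_H(\R)\setminus\calM_2(\R)$ is treated in parallel but relies on the Gaussian spatial decay of $\lMr{0}{G}{2}$ to absorb the weaker tail condition on $\mu$, which accounts for the two separate citations in the statement.
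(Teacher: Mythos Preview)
The paper does not actually prove this theorem; it is quoted verbatim from \cite[Theorem 3.1]{ChenDalang14FracHeat} for $1<a<2$ and \cite[Theorem 2.4]{ChenDalang13Heat} for $a=2$, as stated in the sentence immediately preceding the theorem. Your Picard-iteration sketch is exactly the strategy carried out in those references, including the key observation that the estimates must be pointwise in $(t,x)$ and weighted by $J_0^2$ rather than uniform, so as to accommodate the blow-up of $J_0$ at $t=0$ for measure-valued $\mu$. One small quibble: your BDG constant $z_p$ (in your normalization, the squared constant) should satisfy $z_p\le 4p$ rather than merely ``of order $p$'' in some vague sense, so that $\sqrt{z_p}\,\Lip_\rho\le 2\sqrt{p}\,\Lip_\rho$; the extra factor of $2$ in the paper's $\widehat{\calK}_p=\calK(\cdot,\circ;4\sqrt{p}\,\Lip_\rho)$ comes from the Minkowski splitting $\Norm{u}_p^2\le 2J_0^2+2\Norm{I}_p^2$ before iterating. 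Apart from that bookkeeping, your outline matches the cited proofs.
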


In order to use the moment bounds in \eqref{E:MomUp}, we need some estimates on $\calK(t,x)$.
Recall that if the partial differential operator is the heat operator
$\frac{\partial }{\partial  t}-\frac{\nu}{2} \Delta$ where $\nu>0$, then
\[
\calK^{\mbox{\scriptsize heat}}(t,x;\lambda) = G_{\frac{\nu}{2}}(t,x)
\left(\frac{\lambda^2}{\sqrt{4\pi\nu t}}+\frac{\lambda^4}{2\nu}
\: e^{\frac{\lambda^4 t}{4\nu}}\Phi\left(\lambda^2
\sqrt{\frac{t}{2\nu}}\right)\right),
\]
where $\Phi(x)$ is the distribution function of the standard normal
random variable and $G_\nu(t,x)=\frac{1}{\sqrt{2\pi\nu t}}\exp\left(-x^2/(2\nu t)\right)$; see \cite{ChenDalang13Heat}. If the partial
differential operator is the wave operator
$\frac{\partial ^2 }{\partial t^2}-\kappa^2 \Delta$ where $\kappa>0$, then
\[
\calK^{\mbox{\scriptsize wave}}(t,x;\lambda) = \frac{\lambda^2}{4}
I_0\left(\sqrt{\frac{\lambda^2((\kappa t)^2-x^2)}{2\kappa}}\right) \Indt{|x|\le
\kappa t},
\]
where $I_0(x)$ is the modified Bessel function of the first kind
of order $0$; see \cite{ChenDalang14Wave}.
Except these two cases, there are no explicit formulas for $\calK(t,x)$.
The following upper bound on $\calK(t,x)$ from \cite[Proposition 3.2]{ChenDalang14FracHeat} will be useful in this paper.

\begin{proposition}
\label{P:UpperBdd-K}
Let $\gamma:=\lambda^2\Lambda\:\Gamma(1-1/a)$.
For some finite constant $C=C(\lambda)>0$,
\begin{align}
\calK(t,x;\lambda)
&\le
\frac{C }{t^{1/a}} \lMr{\delta}{G}{a}(t,x)
\left(
1+t^{1/a} \exp\left(\gamma^{a^*} t\right)
\right), \quad\text{for all $t\ge 0$ and $x\in\R$.}
\label{E:UpBd-K}
\end{align}
\end{proposition}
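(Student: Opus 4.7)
The plan is to obtain closed-form upper bounds on each term $\calL_n$ in the series \eqref{E:Ln}--\eqref{E:K} and then sum. Two elementary facts about the Green function suffice: the self-similarity $\lMr{\delta}{G}{a}(s,y) = s^{-1/a}\, \lMr{\delta}{G}{a}\bigl(1, s^{-1/a}y\bigr)$ built into \eqref{E:Green}, which together with \eqref{E:Cst-dLa} gives the pointwise bound
\[
   \lMr{\delta}{G}{a}^{2}(s,y) \le \Lambda\, s^{-1/a}\, \lMr{\delta}{G}{a}(s,y),
\]
and the Chapman--Kolmogorov identity $\lMr{\delta}{G}{a}(s,\cdot) * \lMr{\delta}{G}{a}(t-s,\cdot) = \lMr{\delta}{G}{a}(t,\cdot)$, which is immediate from \eqref{E:Green} because $\lMr{\delta}{\psi}{a}$ appears linearly in $t$ in the Fourier exponent.

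Writing $\alpha := 1-1/a = 1/a^{*}$, I would prove by induction on $n$ that
\[
   \calL_n(t,x;\lambda) \le \frac{\gamma^{\,n+1}\, t^{(n+1)\alpha-1}}{\Gamma\bigl((n+1)\alpha\bigr)}\, \lMr{\delta}{G}{a}(t,x),\qquad n\ge 0.
\]
The base case $n=0$ is the pointwise bound on $\lMr{\delta}{G}{a}^{2}$ together with the definition $\gamma = \lambda^{2}\Lambda\,\Gamma(\alpha)$. For the step $n \to n+1$ I write $\calL_{n+1} = \calL_n \star \calL_0$, apply the same pointwise bound to $\calL_0$, carry out the spatial convolution via Chapman--Kolmogorov, and evaluate the remaining temporal integral as a Beta integral,
\[
   \int_0^t s^{(n+1)\alpha-1}(t-s)^{-1/a}\, ds = t^{(n+2)\alpha-1}\, B\bigl((n+1)\alpha,\, \alpha\bigr).
\]
The value $\Gamma((n+1)\alpha)\Gamma(\alpha)/\Gamma((n+2)\alpha)$ telescopes cleanly: one $\Gamma((n+1)\alpha)$ cancels the denominator at level $n$, and the extra $\Gamma(\alpha)$ combines with $\lambda^{2}\Lambda$ to produce the next factor of $\gamma$.

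Summing the resulting bound over $n$ yields
\[
   \calK(t,x;\lambda) \le \lMr{\delta}{G}{a}(t,x)\; \frac{\gamma}{t^{1/a}}\; E_{\alpha,\alpha}\!\bigl(\gamma\, t^{\alpha}\bigr),
   \qquad E_{\alpha,\alpha}(z) := \sum_{n=0}^{\infty} \frac{z^{n}}{\Gamma\bigl((n+1)\alpha\bigr)},
\]
where $E_{\alpha,\alpha}$ is the two-parameter Mittag--Leffler function. The proposition then reduces to the uniform envelope $E_{\alpha,\alpha}(z) \le C\bigl(1 + z^{(1-\alpha)/\alpha}\exp(z^{1/\alpha})\bigr)$ for $z \ge 0$: inserting $z = \gamma\, t^{\alpha}$ gives $z^{1/\alpha} = \gamma^{a^{*}} t$, and since $1-\alpha = 1/a$ the prefactor $\gamma\, t^{-1/a}\, z^{(1-\alpha)/\alpha}$ reduces to the constant $\gamma^{a^{*}}$. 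Absorbing $\gamma$-dependent constants into $C=C(\lambda)$ then produces $C\, t^{-1/a}\, \lMr{\delta}{G}{a}(t,x)\bigl(1 + t^{1/a}\exp(\gamma^{a^{*}} t)\bigr)$ as claimed.

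The main technical point I expect is the last envelope on $E_{\alpha,\alpha}$ with the \emph{sharp} exponent $z^{1/\alpha}$: any enlargement of this exponent would worsen the coefficient $\gamma^{a^{*}}$ inside the exponential in the proposition. For bounded $z$ the envelope is trivial because $E_{\alpha,\alpha}$ is entire; for large $z$ it is the classical large-argument asymptotic $E_{\alpha,\alpha}(z) \sim \alpha^{-1}\, z^{(1-\alpha)/\alpha}\exp(z^{1/\alpha})$, obtainable from Stirling's formula on $\Gamma((n+1)\alpha)$ together with a Laplace-type estimate on the maximal summand, which peaks near $n \approx z^{1/\alpha}/\alpha$. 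The two regimes glue together into the advertised uniform envelope.
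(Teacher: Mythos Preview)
Your argument is correct. The induction on $\calL_n$ via the pointwise bound $\lMr{\delta}{G}{a}^{2}(s,y)\le \Lambda\, s^{-1/a}\,\lMr{\delta}{G}{a}(s,y)$, the semigroup identity, and the Beta integral is exactly the natural route, and the identification of the resulting series with $E_{\alpha,\alpha}(\gamma t^{\alpha})$ together with the classical large-argument asymptotic $E_{\alpha,\alpha}(z)\sim \alpha^{-1}z^{(1-\alpha)/\alpha}\exp(z^{1/\alpha})$ yields the stated bound with the sharp exponential rate $\gamma^{a^*}t$.

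There is nothing to compare against in the present paper: Proposition~\ref{P:UpperBdd-K} is not proved here but is quoted verbatim from \cite[Proposition~3.2]{ChenDalang14FracHeat}. Your proof is in fact the standard one and is essentially what appears in that reference; the only point you leave at the level of a sketch is the uniform envelope on $E_{\alpha,\alpha}$, but since this is a textbook fact about Mittag--Leffler functions (and since $\alpha=1-1/a\in\,]0,1/2]$ is well inside the classical range), that is harmless.
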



\section{Proof of Theorem \ref{T:WComp}}\label{S:WComp}
Before proving Theorem \ref{T:WComp}, we need some preparation.
One may view $\lMr{\delta}{G}{a}(t,x)$ as an operator, denoted by $\lMr{\delta}{\bold G}{a}(t)$ for clarity, as follows:
\[
\lMr{\delta}{\bold G}{a}(t) f (x) := \left(\lMr{\delta}{G}{a}(t,\cdot)*f\right)(x).
\]
Let $\bold I$ be the identity operator: $\bold I f(x)=(\delta *f)(x) = f(x)$.
Set
\[
\tlMr{\epsilon}{\delta}{D}{a} = \frac{\lMr{\delta}{\bold G}{a}(\epsilon)-\bold I}{\epsilon}.
\]
Let
\begin{align}
\tlMr{\epsilon}{\delta}{\bold G}{a}(t) = \exp(t\tlMr{\epsilon}{\delta}{D}{a})&=
e^{-t/\epsilon}\sum_{n=0}^\infty \frac{(t/\epsilon)^n}{n!} \lMr{\delta}{\bold G}{a}(n\epsilon)=e^{-t/\epsilon}\bold I + \tlMr{\epsilon}{\delta}{\bold R}{a}(t),
\end{align}
where the operator $\tlMr{\epsilon}{\delta}{\bold R}{a}(t)$ has a density, denoted by $\tlMr{\epsilon}{\delta}{R}{a}(t,x)$, which is equal to
\begin{align}\label{E:R}
\tlMr{\epsilon}{\delta}{R}{a}(t,x)&=
e^{-t/\epsilon}\sum_{n=1}^\infty \frac{(t/\epsilon)^n}{n!} \lMr{\delta}{G}{a}(n\epsilon,x).
\end{align}
One may also write the kernel of $\tlMr{\epsilon}{\delta}{\bold G}{a}(t)$ as
\begin{align} \label{E:Geps}
\tlMr{\epsilon}{\delta}{G}{a}(t,x) = e^{-t/\epsilon} \delta_0(x) + \tlMr{\epsilon}{\delta}{R}{a}(t,x).
\end{align}
The two operators $\tlMr{\epsilon}{\delta}{\bold G}{a}$ and $\lMr{\delta}{\bold G}{a}$ are close in many senses;
see Appendix for more details.

\begin{proof}[Proof of Theorem \ref{T:WComp}]
Denote $\phi_\epsilon(x)=(2\pi\epsilon)^{-1/2}\exp\left(-x^2/(2\epsilon)\right)$.
For $\epsilon>0$ and $x\in\R$, denote
\[
W_x^\epsilon (t) := \int_0^t \int_\R \phi_\epsilon(x-y)W(\ud s,\ud y),\quad\text{for $t\ge 0$}.
\]
Clearly, $t\mapsto W_x^\epsilon (t)$ is a one-dimensional Brownian motion.
Denote $\W_x^\epsilon(t) = \frac{\ud }{\ud t}W_x^\epsilon (t)$. Then the quadratic variation of $\ud W_x^\epsilon(t)$ is
\begin{align}\label{E:Quad}
\ud \InPrd{W_x^\epsilon(t)} = \int_\R \phi_\epsilon^2(x-y)\ud y \ud t =\frac{1}{\sqrt{4\pi\epsilon}}\: \ud t.
\end{align}
Consider the following stochastic partial differential equation
\begin{align}\label{E:Ueps}
\begin{cases}
\displaystyle
 \frac{\partial }{\partial t} u_\epsilon(t,x) = \tlMr{\epsilon}{\delta}{D}{a} u_\epsilon(t,x)
+ \rho(u_\epsilon(t,x)) \W_x^\epsilon(t), & \quad t>0, \; x\in\R, \\[0.7em]
u_\epsilon(0,x)= (\mu*\lMr{\delta}{G}{a}(\epsilon,\cdot))(x), \quad x\in\R.
\end{cases}
\end{align}
Since $\rho$ is globally Lipschitz continuous, \eqref{E:Ueps} has a unique strong solution
\[
u_\epsilon(t,x)= (\mu*\lMr{\delta}{G}{a}(\epsilon,\cdot))(x)
+ \int_0^t \ud s \tlMr{\epsilon}{\delta}{D}{a} u_\epsilon(s,x) +
\int_0^t \rho(u_\epsilon(s,x))\ud W_x^\epsilon(s).
\]

{\vspace{0.7em}\noindent \bf Step 1.}
Let $u_{\epsilon,i}(t,x)$ be the solutions to \eqref{E:Ueps} with initial data $\mu_i$, $i=1,2$, respectively.
Denote $v_\epsilon(t,x):= u_{\epsilon,2}(t,x)-u_{\epsilon,1}(t,x)$. We will prove that
\begin{align}\label{E:NonUe}
P\left(v_\epsilon(t,x)\ge 0,\; \text{for every $t>0$ and $x\in\R$}\right) =1.
\end{align}
Let $a_n=-2(n^2+n+2)^{-1}$, $n\ge 0$. Then $a_n\uparrow 0$ as $n\rightarrow \infty$ and
$\int^{a_n}_{a_{n-1}}x^{-2}\ud x = n$. Let $\psi_n(x)$, $n=1,2,\dots$, be nonnegative continuous functions supported on $\:]a_{n-1},a_n[\:$ such that
\[
0\le \psi_n(x)\le \frac{2}{n x^2}\quad\text{and}\quad \int^{a_n}_{a_{n-1}}\psi_n(x) \ud x=1.
\]
Define
\[
\Psi_n(x) := \int_0^x\ud y\int_0^y \psi_n(z)\ud z.
\]
Clearly, $\Psi_n(x)\in C^2(\R)$ with $\Psi_n''(x)=\psi_n(x)$, $\Psi_n(x)=0$ for $x\ge 0$, and $-1\le \Psi_n'(x) =\int_0^x \psi_n(z)\ud z\le 0$ for all $x\in\R$. Let $\one(\cdot)$
denote the indicator function. Here are
three important properties: For all $x\in\R$, as $n\rightarrow+\infty$,
\begin{align}\label{E:PsiConv}
\Psi_n(x)\uparrow -(x\wedge 0)=:\Psi(x), \quad
\Psi_n'(x)\downarrow -\one (x<0)
\quad\text{and}\quad
\Psi_n'(x)\:x\uparrow \Psi(x),
\end{align}
Because for each $x\in\R$ fixed, $u_\epsilon(t,x)$ is a semi-martingale, by \Itos formula,
\begin{align*}
 \Psi_n(v_\epsilon(t,x))=& \int_0^t \Psi_n'(v_\epsilon(s,x)) \left[\rho(u_{\epsilon,2}(s,x))-\rho(u_{\epsilon,1}(s,x))\right] \ud W_x^\epsilon(s)\\
&+\frac{1}{2}\int_0^t\Psi_n''(v_\epsilon(s,x)) \left[\rho(u_{\epsilon,2}(s,x))-\rho(u_{\epsilon,1}(s,x))\right]^2\frac{1}{\sqrt{4\pi\epsilon}}\ud s\\
&+ \int_0^t \Psi_n'(v_\epsilon(s,x)) \tlMr{\epsilon}{\delta}{D}{a}  v_{\epsilon}(s,x) \ud s.
\end{align*}
By the Lipschitz condition on $\rho$,
\[
\Psi_n''(v_\epsilon(s,x)) \left[\rho(u_{\epsilon,2}(s,x))-\rho(u_{\epsilon,1}(s,x))\right]^2
\le \LIP_\rho^2 \Psi_n''(v_\epsilon(s,x)) v_\epsilon^2(s,x)\le
2\LIP_\rho^2/n.
\]
Hence,
\begin{align*}
 \E\left[\Psi_n(v_\epsilon(t,x))\right]\le &\frac{\LIP_\rho^2 t}{n \sqrt{4\pi\epsilon}}
+ \E\left[\frac{1}{\epsilon}\int_0^t  \ud s \: \Psi_n'(v_\epsilon(s,x)) \int_\R \ud y \lMr{\delta}{G}{a}(\epsilon,x-y)\left[v_\epsilon(s,y)-v_\epsilon(s,x)\right]\right].
\end{align*}
Now let $n$ go to $+\infty$, by \eqref{E:PsiConv} and the monotone convergence theorem,
\begin{align*}
 \E\left[\Psi(v_\epsilon(t,x))\right]\le & \frac{1}{\epsilon}\int_0^t  \E\left[\one(v_\epsilon(s,x)<0)v_\epsilon(s,x)\right] \ud s\\
&-\frac{1}{\epsilon}\int_0^t\ud s\int_\R \ud y
\lMr{\delta}{G}{a}(\epsilon,x-y)
\E\left[\one(v_\epsilon(s,x)<0)v_\epsilon(s,y)\right].
\end{align*}
Notice that
\begin{align*}
 -\frac{1}{\epsilon}\int_0^t  \ud s\int_\R\ud y   &\lMr{\delta}{G}{a}(\epsilon,x-y) \E\left[\one(v_\epsilon(s,x)<0) v_\epsilon(s,y)\right]\\
\le&
 -\frac{1}{\epsilon}\int_0^t  \ud s\int_\R\ud y  \lMr{\delta}{G}{a}(\epsilon,x-y) \E\left[\one(v_\epsilon(s,x)<0,v_\epsilon(s,y)<0) v_\epsilon(s,y)\right]\\
=&\frac{1}{\epsilon}\int_0^t  \ud s\int_\R\ud y \lMr{\delta}{G}{a}(\epsilon,x-y) \E\left[\one(v_\epsilon(s,x)<0,v_\epsilon(s,y)<0) |v_\epsilon(s,y)|\right]\\
\le &\frac{1}{\epsilon}\int_0^t  \ud s\int_\R\ud y \lMr{\delta}{G}{a}(\epsilon,x-y) \E\left[\one(v_\epsilon(s,y)<0) |v_\epsilon(s,y)|\right]
\end{align*}
Then using the fact that $|x|\one(x<0) = \Psi(x)$, we have that
\[
 \E\left[\Psi(v_\epsilon(t,x))\right]\le \frac{1}{\epsilon}\int_0^t  \ud s\int_\R\ud
y
\lMr{\delta}{G}{a}(\epsilon,x-y) \E\left[\Psi(v_\epsilon(s,y))\right].
\]
Therefore, by Gronwall's lemma applied to $\sup_{y\in\R}\E\left[\Psi(v_\epsilon(s,y))\right]$, one can conclude that
$\E\left[\Psi(v_\epsilon(s,y))\right]=0$ for every $t>0$ and $x\in\R$. This proves \eqref{E:NonUe}.

{\vspace{0.7em}\noindent \bf Step 2.} In this step, we assume that
$\mu(\ud x) = f(x)\ud x$ with $f\in L^\infty(\R)$ and $f(x)\ge 0$ for all $x\in\R$.
Denote $f_\epsilon(x):= (\mu*\lMr{\delta}{G}{a}(\epsilon,\cdot))(x)$.
We will prove that
\begin{align}\label{E:uAprox}
\lim_{\epsilon\rightarrow 0}  \sup_{x\in\R}
\Norm{u_\epsilon(t,x)-u(t,x)}_2^2 = 0,\quad \text{for all $t>0$},
\end{align} where $u_\epsilon$ is a solution to \eqref{E:Ueps} with $u_\epsilon(0,x)=f_\epsilon(x)$ and $u$ is a solution to \eqref{E:FracHt}.

Fix $T>0$.
Notice that $u_\epsilon(t,x)$ can be written in the following mild form using the kernel of $\tlMr{\epsilon}{\delta}{\bold G}{a}(t)$ in \eqref{E:Geps}:
\begin{align*}
u_\epsilon(t,x)=&\left(f_\epsilon*\tlMr{\epsilon}{\delta}{G}{a}(t,\cdot)\right)(x)
+\int_0^t e^{-(t-s)/\epsilon}\rho\left(u_\epsilon(s,x)\right)\ud W_x^\epsilon(s)\\
&+\int_0^t\int_\R\tlMr{\epsilon}{\delta}{R}{a}(t-s,x-y) \rho\left(u_\epsilon(s,y)\right)\ud W_y^\epsilon(s)\ud y,
\end{align*}
where the last term equals to
\[
\int_0^t \int_\R \left(\int_\R \ud z \tlMr{\epsilon}{\delta}{R}{a}(t-s,x-z)\rho(u_\epsilon(s,z)) \phi_\epsilon(y-z)\right) W(\ud s,\ud y).
\]
By \eqref{E:RGaprx2} below, the boundedness of the initial data implies that for all $t>0$,
\begin{gather}\label{E:AT}
A_t:=\sup_{\epsilon\in\:]0,1]}\sup_{s\in[0,t]} \sup_{x\in\R} \Norm{u_\epsilon(s,x)}_2^2\vee \Norm{u(s,x)}_2^2<+\infty.
\end{gather}
By the linear growth condition \eqref{E:LinGrow},

\begin{align*}
&\Norm{u_\epsilon(t,x)-u(t,x)}_2^2\\
&\hspace{1em}\le
6\left(f_{\epsilon}*\tlMr{\epsilon}{\delta}{G}{a}(t,\cdot)-f*\lMr{\delta}{G}{a}(t,\cdot)\right)^2(x) \\
&\hspace{2em} +6\Lip_\rho^2 \int_0^t \ud s  \: \frac{1}{\sqrt{4\pi\epsilon}}e^{-2(t-s)/\epsilon}\left(\Vip^2+\Norm{u_\epsilon(s,x)}_2^2\right)\\
&\hspace{2em}+6\int_0^t\ud s\int_\R\ud y\: \E\left[\left|
\int_\R \ud z \tlMr{\epsilon}{\delta}{R}{a}(t-s,x-z)
\left[\rho(u_\epsilon(s,z))-\rho(u(s,z))\right]\phi_\epsilon(y-z)
\right|^2\right]\\
&\hspace{2em}+6\int_0^t\ud s\int_\R\ud y\: \E\left[\left|
\int_\R \ud z \tlMr{\epsilon}{\delta}{R}{a}(t-s,x-z)
\left[\rho(u(s,z))-\rho(u(s,y))\right]\phi_\epsilon(y-z)
\right|^2\right]\\
&\hspace{2em}+6\int_0^t\ud s\int_\R\ud y\: \E\left[\left|
\int_\R \ud z
\left[\tlMr{\epsilon}{\delta}{R}{a}(t-s,x-z)-\lMr{\delta}{G}{a}(t-s,x-z)\right]  \rho(u(s,y))\phi_\epsilon(y-z)
\right|^2\right]\\
&\hspace{2em}+6\int_0^t\ud s\int_\R\ud y\: \E\left[\left|
\int_\R \ud z
\left[\lMr{\delta}{G}{a}(t-s,x-z)-\lMr{\delta}{G}{a}(t-s,x-y)\right]  \rho(u(s,y))\phi_\epsilon(y-z)
\right|^2\right]\\
&\hspace{1em}=: 6 \sum_{n=1}^6 I_n(t,x;\epsilon).
\end{align*}

Denote $C_f:=\sup_{x\in\R} f(x)\ge \sup_{x\in\R} f_\epsilon(x)$.
Using the semigroup property, we see that
\begin{align*}
& I_1(t,x;\epsilon)  \\
\le& \left[\left(f_\epsilon*\left|\tlMr{\epsilon}{\delta}{G}{a}(t,\cdot)-\lMr{\delta}{G}{a}(t,\cdot)\right|\right)(x) +\left(f*|\lMr{\delta}{G}{a}(t+\epsilon,\cdot)-\lMr{\delta}{G}{a}(t,\cdot)|\right)
(x) \right]\\
& \times \left[\left(f_\epsilon*\tlMr{\epsilon}{\delta}{G}{a}(t,\cdot)\right)(x)+\left(f*\lMr{\delta}{G}{a}(t,\cdot)\right)(x)\right]\\
\le &
2\: C_f^2\left(e^{-t/\epsilon}+\int_\R \ud y \left|\tlMr{\epsilon}{\delta}{R}{a}(t,y)-\lMr{\delta}{G}{a}(t,y)\right|+\int_{\R}\ud y \left|
\lMr{\delta}{G}{a}{(t+\epsilon,y)}-\lMr{\delta}{G}{a}(t,y) \right| \right)\\
\le &
2\: C_f^2
\left(2e^{-t/\epsilon}+(C'+C'') \left(\epsilon/t\right)^{1/2} \right),
\end{align*}
where the last step is due to Lemma \ref{L:RGaprxL1}, \eqref{E:timediff} and the fact that $\log(1+x) \leq \sqrt{x}$ for all $x\geq 0$, and  $C'$ and $C''$ are the constants defined in
\eqref{E:RGaprxL1} and \eqref{E:constantc'}. For simplicity, define $C_*:=C'+C''$.

As for $I_2(t,x;\epsilon)$,
\[
I_2(t,x;\epsilon)\le \Lip_\rho^2\: (\Vip^2+A_t)\frac{\sqrt{\epsilon}}{\sqrt{4\pi}}
 \left(1- e^{-2t/\epsilon }\right),
\]
which implies
\[
\lim_{\epsilon\rightarrow 0}\sup_{0\le t\le T}\sup_{x\in\R}I_2(t,x;\epsilon) =0.
\]
By the H\"older inequality and the Lipschitz continuity of the function $\rho$,
\begin{align}\notag
I_3(t,x;\epsilon)&
\le \LIP_\rho^2 \int_0^t\ud s\int_\R\ud y\int_\R\ud z \tlMr{\epsilon}{\delta}{R}{a}^2(t-s,x-z)\Norm{u_\epsilon(s,z)-u(s,z)}_2^2 \phi_\epsilon(y-z)\\
&\le \LIP_\rho^2 \int_0^t\ud s\int_\R\ud y\tlMr{\epsilon}{\delta}{R}{a}^2(s,x-y)\Norm{u_\epsilon(t-s,y)-u(t-s,y)}_2^2,
\label{E:I3}
\end{align}
and similarly,
\begin{align*}
I_4(t,x;\epsilon)
& \le \LIP_\rho^2 \int_0^t\ud s\int_\R\ud y\int_\R\ud z \tlMr{\epsilon}{\delta}{R}{a}^2(s,z)\Norm{u(t-s,x-z)-u(t-s,x-y)}_2^2 \phi_\epsilon(y-z).
\end{align*}
By Lemma \ref{L:MInc}, for some constant $C:=C(T,a,\delta,\mu)$,
\[
\Norm{u(t-s,x-z)-u(t-s,x-y)}_2^2 \le C (t-s)^{-1/a}|y-z| + C A_T |y-z|^{a-1}.
\]
Hence, integrating over $\ud y$ first and then integrating over $\ud z$ using \eqref{E:RGaprx2} give that
\begin{align*}
I_4(t,x;\epsilon)
 &\le \LIP_\rho^2 C \int_0^t\ud s\int_\R\ud z \tlMr{\epsilon}{\delta}{R}{a}^2(s,z)
\left[(t-s)^{-1/a} \sqrt{2\epsilon/\pi}+\frac{A_T 2^{(a-1)/2}}{\sqrt{\pi}}\Gamma\left(a/2\right)\epsilon^{(a-1)/2}
\right]\\
&\le\frac{\LIP_\rho^2 C \: C_{a,\delta}}{\sqrt{\pi}} \int_0^t \ud s
\: s^{-1/a}\left[(t-s)^{-1/a} \sqrt{2\epsilon}+A_T 2^{(a-1)/2}\Gamma\left(a/2\right)\epsilon^{(a-1)/2}
\right],
\end{align*}
where $C_{a,\delta}$ is defined in Lemma \ref{L:RGaprx}. Finally, integrating over $\ud s$ using the Beta integral, we have that for some finite constant $C^*:=C^*(T,a,\delta,\mu,A_T)>0$,
\[
I_4(t,x;\epsilon)\le C^*  \LIP_\rho^2 \left(t^{1-2/a} \epsilon^{1/2}+\epsilon^{(a-1)/2}\right).
\]
By H\"older inequality, \eqref{E:LinGrow} and \eqref{E:AT},
\[
I_5(t,x;\epsilon)\le \Lip_\rho^2 \left(\Vip^2+A_t\right)\int_0^t\ud s\int_\R\ud y \int_\R\ud z \: \phi_\epsilon(y-z)
\left[\tlMr{\epsilon}{\delta}{R}{a}(t-s,x-z)-\lMr{\delta}{G}{a}(t-s,x-z)\right]^2.
\]
Integrate $\ud y$ and enlarge the integral interval for $\ud s$ from $[0,t]$ to $[0,T]$,
\[
I_5(t,x;\epsilon)\le \Lip_\rho^2 \left(\Vip^2+A_T\right)\int_0^T\ud s\int_\R \ud z \:
\left[\tlMr{\epsilon}{\delta}{R}{a}(s,z)-\lMr{\delta}{G}{a}(s,z)\right]^2,
\]
and then apply \eqref{E:RGaprx1} to obtain
\[
\lim_{\epsilon\rightarrow 0} \sup_{0\le t\le T}\sup_{x\in\R} I_5(t,x;\epsilon) =0.
\]
Similarly to the case of $I_5$, we have that
\begin{align*}
I_6(t,x;\epsilon)\le & \Lip_\rho^2 \left(\Vip^2+A_T\right)\int_0^T\ud s\int_\R\ud y \int_\R\ud z \: \phi_\epsilon(y-z)
\left[\lMr{\delta}{G}{a}(s,x-z)-\lMr{\delta}{G}{a}(s,x-y)\right]^2\\
= &
 2\Lip_\rho^2 \left(\Vip^2+A_T\right)\int_0^T\ud s\int_\R\ud y\left[
\lMr{\delta}{G}{a}^2(s,y)
-\lMr{\delta}{G}{a}(s,y) \int_\R\ud z \lMr{\delta}{G}{a}(s,z) \phi_\epsilon(y-z)\right].
\end{align*}
Define $F_\epsilon(s,y):=\lMr{\delta}{G}{a}^2(s,y)
-\lMr{\delta}{G}{a}(s,y) \int_\R\ud z \lMr{\delta}{G}{a}(s,z) \phi_\epsilon(y-z)$. Clearly, $\lim_{\epsilon\rightarrow 0}F_\epsilon(s,y)=0$ for all $s>0$ and $y\in\R$.
On the other hand,
\[
F_\epsilon(s,y)\le
\lMr{\delta}{G}{a}^2(s,y) + \lMr{\delta}{G}{a}(s,y)s^{-1/a} \Lambda,
\]
where the constant $\Lambda$ is defined in \eqref{E:Cst-dLa}. In fact, this upper bound is integrable:
\begin{align*}
\int_0^T\ud s\int_\R\ud y &\left(\lMr{\delta}{G}{a}^2(s,y) + \lMr{\delta}{G}{a}(s,y)s^{-1/a} \Lambda\right)\\
&=\int_0^T \ud s \left[\lMr{\delta}{G}{a}(2s,0)+ \Lambda s^{-1/a}\right]
= \left(\frac{\lMr{\delta}{G}{a}(1,0)}{2^{1/a}}+\Lambda\right) \frac{a}{a-1}\:T^{1-1/a}.
\end{align*}
Hence, the dominated convergence theorem implies that
\[
\lim_{\epsilon\rightarrow 0}\sup_{0\le t\le T}\sup_{x\in\R}I_6(t,x;\epsilon) =0.
\]
Now set $M(t;\epsilon):=\sup_{y\in\R} \Norm{u_\epsilon(t,y)-u(t,y)}_2^2$.
Fix $T>0$. Combining things together,  we can get that for some constant $C_T>0$,
\[
M(t;\epsilon)\le C_T\int_0^t\ud s\: (t-s)^{-1/a} M(s;\epsilon)
+ H(T;\epsilon) + \widehat{H}(t;\epsilon),
\]
where
\begin{align*}
H(T;\epsilon) & :=6\sum_{n=2,5,6}\sup_{0\le t\le T}\sup_{x\in\R} I_n(t,x;\epsilon),\\
 \widehat{H}(t;\epsilon) &:=12\: C_f^2
\left(2e^{-t/\epsilon}+C_* \left(\epsilon/t\right)^{1/2}\right) + 6\: C^*  \LIP_\rho^2 \left(t^{1-2/a} \epsilon^{1/2}+\epsilon^{(a-1)/2}\right).
\end{align*}
Then by Chandirov's lemma, which is a variation of Bellman's inequality (see \cite[Theorem 1.4, on p. 5]{BainovSimeonov92}), for $0<t\le T$,
\begin{align*}
 M(t;\epsilon) &\le H(T;\epsilon)+ \widehat{H}(t;\epsilon)
+\int_0^t  \ud s \left(H(T;\epsilon)+ \widehat{H}(s;\epsilon)\right)(t-s)^{-1/a} \,
\exp\left(\int_s^t \ud \tau \: (t-\tau)^{-1/a}\right)\\
&= H(T;\epsilon)+ \widehat{H}(t;\epsilon)
+\int_0^t  \ud s \left(H(T;\epsilon)+ \widehat{H}(s;\epsilon)\right)(t-s)^{-1/a}\,
\exp\left(\frac{a}{a-1}(t-s)^{1-1/a}\right)\\
&\le H(T;\epsilon)\left(1+\frac{a}{a-1} T^{1-1/a}\,\exp\left(\frac{a\:T^{1-1/a}}{a-1}\right)\right)\\
&\qquad\qquad\qquad+ \widehat{H}(t;\epsilon)+\int_0^t  \ud s\:  \widehat{H}(s;\epsilon)(t-s)^{-1/a}\,
\exp\left(\frac{a\: (t-s)^{1-1/a}}{a-1}\right).
\end{align*}
Clearly, as $\epsilon \rightarrow 0$, the first two terms in the above upper bound go to zero.
The integral also goes to zero by applying the dominated convergence theorem.
This proves \eqref{E:uAprox}.

Finally, suppose that $\mu_i(\ud x)=f_i(x)\ud x$ with $f_i\in L^\infty(\R)$, $i=1,2$.
If $f_1(x)\le f_2(x)$ for almost all $x\in\R$, then by Step 1 we know that $v_\epsilon(t,x):= u_{\epsilon,2}(t,x)- u_{\epsilon,1}(t,x)\ge 0$ for all $t>0$ and $x\in\R$, a.s. Then Step 2 implies
$v_\epsilon(t,x)$ converges to $v(t,x)=u_2(t,x) - u_1(t,x)$ in $L^2(\Omega)$ for all $t>0$ and $x\in\R$. Therefore,  the nonnegativity of $v(t,x)$ is inherited from that of $v_\epsilon(t,x)$, that is,
\[
P(u_1(t,x)\le u_2(t,x),\:\text{for all $t>0$ and $x\in\R$}) =1.
\]

{\vspace{0.7em}\noindent \bf Step 3.}
Now we assume that $\mu_i \in\calM_a^*(\R)$.
Recall the definition of $\psi_\epsilon$ in \eqref{E:psi}.
Fix $\epsilon>0$. Let $u_{\epsilon,i}$, $i=1,2$, be the solutions to \eqref{E:FracHt} starting from
$\left([\mu_i \psi_\epsilon]*\lMr{\delta}{G}{a}(\epsilon,\cdot)\right)(x)$.
Denote $v(t,x)=u_2(t,x)-u_1(t,x)$ and $v_\epsilon(t,x)=u_{\epsilon,2}(t,x)-u_{\epsilon,1}(t,x)$.
Because $\psi_\epsilon$ is a continuous function with compact support on $\R$, the initial data for $u_{\epsilon,i}(t,x)$ is bounded:
\[
\sup_{x\in\R} |\left([\mu_i \psi_\epsilon]*\lMr{\delta}{G}{a}(\epsilon,\cdot)\right)(x)|
\le
\frac{\Lambda}{\epsilon^{1/a}} \int_\R \psi_\epsilon(y)|\mu_i|(\ud y)<+\infty,
\]
where $\Lambda$ is defined in \eqref{E:Cst-dLa}. Hence, by Step 2, we have that
\[
P(v_\epsilon(t,x)\ge 0,\:\text{for all $t>0$ and $x\in\R$}) =1,\quad\text{for all  $\epsilon>0$.}
\]
Applying Theorem \ref{T:Approx}, we obtain
\[
P\left(v(t,x)\ge 0,\:\text{for all $t>0$ and $x\in\R$}\right) =1.
\]
This completes the proof of Theorem \ref{T:WComp}.
\end{proof}


\section{Proof of Theorem \ref{T:SComp}} \label{S:Scomp}

We need several lemmas.
Lemma \ref{L:J0LInd} below plays a role to initialize the induction procedure.

\begin{lemma}\label{L:J0LInd}
Let $d>0$. For all $t>0$ and $M>0$, there exist some constants $1<m_0=m_0(t,M)<\infty$ and
$0<\gamma \leq 1/4$ such that for all $m\ge m_0$, all $s\in \left[t/(2m),t/m\right]$ and $x\in\R$,
\begin{align}\label{E:J0LInd1}
\left(\lMr{\delta}{G}{a}(s,\cdot)*1_{]-d,d[}(\cdot)\right)(x)
\ge \gamma\, 1_{]-d-M/m,d+M/m[}(x).
\end{align}
\end{lemma}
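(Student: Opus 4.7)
The plan is to use the self-similarity of the $a$-stable Green function,
\[
\lMr{\delta}{G}{a}(s,z)=s^{-1/a}\,\lMr{\delta}{G}{a}(1,s^{-1/a}z),
\]
to reduce the convolution to an integral over a single controllable interval. Applying the substitution $w=s^{-1/a}(x-y)$ gives, for every $s>0$ and $x\in\R$,
\[
\left(\lMr{\delta}{G}{a}(s,\cdot)*1_{]-d,d[}(\cdot)\right)(x)=\int_{s^{-1/a}(x-d)}^{s^{-1/a}(x+d)}\lMr{\delta}{G}{a}(1,w)\,\ud w.
\]
Since $s\in[t/(2m),t/m]$ forces $s^{-1/a}\in\bigl[(m/t)^{1/a},(2m/t)^{1/a}\bigr]$, and since $a>1$ implies $m^{1-1/a}\to\infty$, the quantities
\[
\epsilon(m):=\frac{2^{1/a}M}{t^{1/a}\,m^{1-1/a}}\qquad\text{and}\qquad N(m):=(m/t)^{1/a}\,d
\]
satisfy $\epsilon(m)\downarrow 0$ and $N(m)\uparrow\infty$ as $m\to\infty$.

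Next I split on the sign of $x$. For $0\le x\le d+M/m$ one has $x-d\le M/m$ and $x+d\ge d$, so $s^{-1/a}(x-d)\le\epsilon(m)$ and $s^{-1/a}(x+d)\ge N(m)$; since $\lMr{\delta}{G}{a}(1,\cdot)\ge 0$, shrinking the domain of integration gives
\[
\left(\lMr{\delta}{G}{a}(s,\cdot)*1_{]-d,d[}(\cdot)\right)(x)\ge\int_{\epsilon(m)}^{N(m)}\lMr{\delta}{G}{a}(1,w)\,\ud w\xrightarrow[m\to\infty]{}\alpha_+:=\int_0^{\infty}\lMr{\delta}{G}{a}(1,w)\,\ud w.
\]
A symmetric computation for $-d-M/m<x\le 0$, now using the inclusion $[-N(m),-\epsilon(m)]\subseteq[s^{-1/a}(x-d),s^{-1/a}(x+d)]$, produces a lower bound that converges to $\alpha_-:=\int_{-\infty}^0\lMr{\delta}{G}{a}(1,w)\,\ud w$. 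Crucially, neither lower bound depends on $x$ or $s$, so the convergences are uniform over the ranges of interest.

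To conclude, set $\gamma:=\min\{1/4,\alpha_+/2,\alpha_-/2\}$ and choose $m_0$ so large that both of the above integrals exceed $\gamma$ for every $m\ge m_0$. The only non-trivial verification is that $\alpha_\pm>0$: this relies on the standard fact that for $a\in\;]1,2]$ and $|\delta|\le 2-a$ the $a$-stable density is strictly positive on all of $\R$ (the extremal-skewness laws become one-sided only when $a<1$), a fact one can read off from \cite{UchaikinZolotarev99,Zolotarev86}. This positivity of the two tails is the main subtlety; once it is in hand, the remainder of the argument is purely a scaling calculation.
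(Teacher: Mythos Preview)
Your argument is correct and essentially identical to the paper's: both apply the scaling $\lMr{\delta}{G}{a}(s,z)=s^{-1/a}\lMr{\delta}{G}{a}(1,s^{-1/a}z)$, split on the sign of $x$, and observe that the resulting integration interval expands to cover a half-line as $m\to\infty$. The paper phrases the computation in terms of a random variable $Z$ with density $\lMr{\delta}{G}{a}(1,\cdot)$ and sets $\gamma:=\min\{P(Z\le 0),P(Z\ge 0)\}/2$, which coincides with your $\min\{1/4,\alpha_+/2,\alpha_-/2\}$ since $\alpha_+ +\alpha_-=1$.
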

\begin{proof}
Let $Z$ be a random variable with the stable density $\lMr{\delta}{G}{a}(1,x)$.
Define $\gamma:=\min\{P(Z\le 0), P(Z\ge 0)\}/2$. Clearly, $0<\gamma\le 1/4$.
We first consider the case where $-d-M/m\leq x \le 0$.
Because  $t/2  \le ms\le t$, we have
\begin{align*}
\left(\lMr{\delta}{G}{a}(s,\cdot)*1_{]-d,d[}(\cdot)\right)(x) &=
\int_{-d}^d \lMr{\delta}{G}{a}(s,x-y)\ud y= \int^{\frac{x+d}{s^{1/a}}}_{\frac{x-d}{s^{1/a}}} \lMr{\delta}{G}{a}(1,z)\ud z
\\
& \ge P\left(-d\, (2m)^{1/a} t^{-1/a} \le Z\le -M\,m^{(1-a)/a} t^{-1/a} \right).
\end{align*}
Similarly, when $0 \le x \le d+M/m$, we have
\begin{align*}
\left(\lMr{\delta}{G}{a}(s,\cdot)*1_{]-d,d[}(\cdot)\right)(x)
&\ge P\left(M\, t^{-1/a} m^{(1-a)/a} \le Z \le d (2m)^{1/a} t^{-1/a}\right).
\end{align*}
Therefore, when $m$ is large enough, the above probabilities are bigger than $\gamma$. 
This completes the proof of Lemma \ref{L:J0LInd}.
\end{proof}

\begin{lemma}\label{L:MBds}
(1) If $\mu\in\calM_a(\R)$ and $\rho$ satisfies \eqref{E:LinGrow}, then for all $p\ge 2$,
there exists some finite constant $C:=C(a,\delta,\Lip_\rho,\Vip,\mu,p)>0$ such that,
\[
 \sup_{x\in\R}
\Norm{u(t,x)}_p^2 \le C (t\vee 1)^{2(1+1/a)}t^{-2/a}\left[1+t^{1-1/a}+ t\exp\left( \gamma^{a^*} t\right)\right],
\]
for all $t>0$, where $\gamma:=8p\Lip_\rho\Lambda\Gamma(1/a^*)$.\\
(2) If $\mu(\ud x) = c\,\ud x$, $c\ne 0$ and $\rho(0)=0$, then for some constant $Q:=Q(c,a,\LIP_\rho,\Lambda)>0$,
\[
 \sup_{x\in\R}\E\left(|u(t,x)|^p\right)\le Q^p \exp\left( Q p^{\frac{2a-1}{a-1} } t\right),\quad
\text{for all $p\ge 2$ and $t\ge 0$}.
\]
\end{lemma}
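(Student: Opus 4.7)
The plan is to reduce both statements to the moment bound \eqref{E:MomUp} in Theorem~\ref{T:ExUni} together with the kernel estimate in Proposition~\ref{P:UpperBdd-K}.

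For part~(1), I would first obtain a uniform pointwise bound on $J_0(t,x)$. Writing $M_0:=\sup_{y\in\R}\int|\mu|(\ud z)/(1+|y-z|^{1+a})<\infty$ and combining the flat bound $\lMr{\delta}{G}{a}(t,x)\le \Lambda t^{-1/a}$ with the decay $\lMr{\delta}{G}{a}(t,x)\lesssim t/(t^{1+1/a}+|x|^{1+a})$, splitting the $y$-integration into $\{|x-y|\le 1\}$ and $\{|x-y|>1\}$ yields $|J_0(t,x)|\le C_1(M_0)(t^{-1/a}+t)$, hence $J_0^2(t,x)\le C_2 (t\vee 1)^{2(1+1/a)} t^{-2/a}$. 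Substituting into the $p>2$ branch of \eqref{E:MomUp} reduces the remaining task to estimating $(\Vip^2+2J_0^2)\star\widehat{\calK}_p$. For the $\Vip^2\star\widehat{\calK}_p$ piece, integrating $y$ first against Proposition~\ref{P:UpperBdd-K} and using $\int\lMr{\delta}{G}{a}(s,y)\ud y=1$ gives $\int_\R\widehat{\calK}_p(s,y)\ud y\le C(s^{-1/a}+\exp(\gamma^{a^*}s))$, after which the $\ud s$ integral is elementary.

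The main obstacle lies in the $J_0^2\star\widehat{\calK}_p$ term: the naive pointwise bound on $J_0^2$ introduces an $(t-s)^{-2/a}$ singularity at $s=t$ that is non-integrable for $a<2$. I would therefore \emph{linearise} via the inequality $J_0(t-s,x-y)^2\le \Lambda(t-s)^{-1/a}\int|\mu|(\ud z)\lMr{\delta}{G}{a}(t-s,x-y-z)$, insert this into the convolution, and then use the semigroup identity $\lMr{\delta}{G}{a}(t-s,\cdot)*\lMr{\delta}{G}{a}(s,\cdot)=\lMr{\delta}{G}{a}(t,\cdot)$ to carry out the $y$-integration against the Green-function part of $\widehat{\calK}_p$ appearing in Proposition~\ref{P:UpperBdd-K}. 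What remains factors out the uniform bound on $\int|\mu|(\ud z)\lMr{\delta}{G}{a}(t,x-z)$ from Step~1 and leaves an $s$-integral of the form $\int_0^t (t-s)^{-1/a} s^{-1/a}\bigl(1+s^{1/a} e^{\gamma^{a^*}s}\bigr)\ud s$, which splits into a Beta integral $B(1-1/a,1-1/a)\,t^{1-2/a}$ and an exponential part dominated by $(a/(a-1))\,t^{1-1/a}e^{\gamma^{a^*}t}$. Collecting the three contributions and normalising by the common factor $(t\vee 1)^{2(1+1/a)}t^{-2/a}$ yields the stated inequality, with the precise form of $\gamma$ differing from that in Proposition~\ref{P:UpperBdd-K} only by harmless numerical factors that are absorbed into the final constant $C$.

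Part~(2) should follow as a cleaner variant of the same method. Because $\mu=c\,\ud x$, the conservation law $\int\lMr{\delta}{G}{a}(t,x-y)\ud y=1$ gives $J_0(t,x)\equiv c$; and because $\rho(0)=0$, the Lipschitz hypothesis permits taking $\Vip=0$ in \eqref{E:LinGrow}. Then \eqref{E:MomUp} collapses to $\|u(t,x)\|_p^2\le 2c^2\bigl(1+\int_0^t\!\int_\R\widehat{\calK}_p(s,y)\,\ud y\,\ud s\bigr)$, and the same space-then-time integration handled in part~(1) produces $\|u(t,x)\|_p^2\le C(c,a,\LIP_\rho,\Lambda)\exp(\gamma^{a^*}t)$ with $\gamma^{a^*}$ of order $p^{a/(a-1)}$. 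Raising to the $p/2$-th power and using $1+a/(a-1)=(2a-1)/(a-1)$ yields the exponent $p^{(2a-1)/(a-1)}$, and absorbing every numerical constant into a single $Q>0$ recovers the stated bound $Q^p\exp(Qp^{(2a-1)/(a-1)}t)$. Apart from this bookkeeping, the entire genuine difficulty resides in the linearisation/semigroup trick used in part~(1).
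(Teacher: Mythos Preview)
Your proof of part~(2) matches the paper's exactly: $J_0\equiv c$, $\Vip=0$, then \eqref{E:MomUp} and Proposition~\ref{P:UpperBdd-K} give $\Norm{u(t,x)}_p^2\le 2c^2+Cc^2\bigl(\tfrac{a}{a-1}t^{1-1/a}+\gamma_p^{-a^*}e^{\gamma_p^{a^*}t}\bigr)$, absorb the polynomial term into the exponential, and raise to the $p/2$ power.

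For part~(1) the paper does \emph{not} give an argument at all: it simply cites \cite[Lemma 4.9 and (4.20)]{ChenDalang14FracHeat}. Your outline is therefore more than what the paper offers, and is in fact close to what that reference does. The one genuine slip is in your linearisation step. The inequality
\[
J_0(t-s,x-y)^2\le \Lambda(t-s)^{-1/a}\int_\R|\mu|(\ud z)\,\lMr{\delta}{G}{a}(t-s,x-y-z)
\]
would require $|J_0(r,\cdot)|\le\Lambda r^{-1/a}$, which is false for general $\mu\in\calM_a(\R)$: the pointwise bound $\lMr{\delta}{G}{a}(r,\cdot)\le\Lambda r^{-1/a}$ says nothing useful once $|\mu|(\R)=\infty$. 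What you actually have from your own first step is $|J_0(r,\cdot)|\le C_1(M_0)(r^{-1/a}+r)\le C_1(M_0)(t\vee1)^{1+1/a}r^{-1/a}$ for $0<r\le t$, and it is this constant that should appear in the linearisation. This is not cosmetic: that extra factor $(t\vee1)^{1+1/a}$, together with the same factor coming from the bound on $\int|\mu|(\ud z)\lMr{\delta}{G}{a}(t,x-z)$ after the semigroup step, is precisely what produces the $(t\vee1)^{2(1+1/a)}$ in the statement. With this correction the rest of your argument (semigroup identity, Beta integral, exponential piece) goes through as written.
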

\begin{proof}
Part (1) is from \cite[Lemma 4.9 and (4.20)]{ChenDalang14FracHeat}.
As for part (2), notice that $J_0(t,x) \equiv c$. Then by \eqref{E:MomUp} and \eqref{E:UpBd-K}, for $p\ge 2$ and $p\in\bbN$,
\begin{align*}
\Norm{u(t,x)}_p^2 & \le 2c^2 +C \:c^2\int_0^t\ud s\:  \left(s^{-1/a}+\exp\left(\gamma_p^{a^*} s\right)\right)\\
&\le
2c^2 +C \:c^2\left(\frac{a}{a-1} t^{\frac{a-1}{a}} + \gamma_p^{-a^*} \exp\left(\gamma_p^{a^*} t\right)\right),
\end{align*}
where $\gamma_p=16\: p \Lip_\rho^2 \Lambda \Gamma(1/a^*)$ and the constant $C=C(\LIP_\rho)$ is defined in Proposition \ref{P:UpperBdd-K}. Notice that $\log(x)\le \beta x$ for all $x\ge 0$
whenever $\beta\ge e^{-1}$.
So by choosing $\theta= \frac{a-1}{a e}\gamma_2^{-a^*}$, we have that $\exp\left(\theta \gamma_p^{a^*} t\right) \ge  t^{\frac{a}{a-1}}$ for all $t>0$.
Hence, if $c\ne 0$, then
\[
\Norm{u(t,x)}_p^2 \le \left[ 2c^2 +
C \: c^2\left(\frac{a}{a-1} + \gamma_2^{-a^*} \right)\right]\exp\left(\theta \gamma_p^{a^*} t\right).
\]
Then, raise both sides by a power of $p/2$. This completes the proof of Lemma \ref{L:MBds}.
\end{proof}

The following lemma proves the inductive step.

\begin{lemma}\label{L:LgDiv}
Let $d>0, t>0$ and $M>0$. If $\rho(0)=0$ and $\mu(\ud x)=1_{[-d,d]}(x)\ud x$, then there are some finite constants $Q:=Q(\beta, \LIP_\rho, \Lambda, t)>0$, $0<\beta\le 1/8$, and $m_0>0$ such that
for
all $m\ge m_0$,
\begin{align*}
P\Big(
u(s,x)\ge \beta 1_{]-d-M/m,d+M/m[}(x)\;\;&\text{for all $\frac{t}{2m}\le s \le \frac{t}{m}$ and $x\in\R$}
\Big)\\
&\ge 1 -\exp\left(- Q \: m^{1-1/a} [\log(m)]^{2-1/a}\right).
\end{align*}
\end{lemma}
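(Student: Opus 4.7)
The plan is to decompose $u(s,x)=J_0(s,x)+I(s,x)$, use Lemma~\ref{L:J0LInd} to pin down a pointwise lower bound on $J_0$ on the rectangle $K_m:=[t/(2m),t/m]\times[-d-M/m,d+M/m]$, and then show that the stochastic integral part $I$ stays small uniformly on $K_m$ with the required probability. Since $\mu=1_{[-d,d]}\,\mathrm{d}x\ge 0$ and $\rho(0)=0$, the weak comparison principle (Theorem~\ref{T:WComp}) against the zero solution gives $u(s,x)\ge 0$ a.s., which settles the desired inequality for every $x\notin\,]-d-M/m,d+M/m[$. For $(s,x)\in K_m$ with $m\ge m_0$, Lemma~\ref{L:J0LInd} provides $J_0(s,x)\ge\gamma$ for some $\gamma\in(0,1/4]$, so that setting $\beta:=\gamma/2\le 1/8$ reduces the lemma to the estimate
\[
   P\Bigl(\sup_{(s,x)\in K_m}|I(s,x)|>\gamma/2\Bigr)\le\exp\!\bigl(-Qm^{1-1/a}(\log m)^{2-1/a}\bigr).
\]

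To bound this probability, I would first control the moments of $I$. Since $\mu\le \mathrm{d}x$ and $\mathrm{d}x\in\calM_a^*(\R)$, a second use of Theorem~\ref{T:WComp} gives $0\le u\le v$ a.s., where $v$ is the solution from Lebesgue initial data. Lemma~\ref{L:MBds}(2) then yields $\Norm{u(s,x)}_p\le Q\exp(Qp^{a/(a-1)}s)$ for all $p\ge 2$. Inserting this into BDG for the Walsh integral, using $|\rho(u)|\le\LIP_\rho|u|$, and the standard estimate $\int_0^s\!\int_\R\lMr{\delta}{G}{a}^2(s-r,y)\,\mathrm{d}y\,\mathrm{d}r\le Cs^{1-1/a}$, produces the pointwise bound
\[
   \Norm{I(s,x)}_p\le C\sqrt{p}\,s^{(a-1)/(2a)}\exp\!\bigl(Qp^{a/(a-1)}s\bigr),
\]
together with analogous Hölder-type moment estimates for the increment $I(s,x)-I(s',x')$, with Hölder exponents up to $(a-1)/(2a)$ in time and $(a-1)/2$ in space and $p$-dependent prefactors of the same exponential order (the latter being standard variants of the estimates underlying Theorem~\ref{T:Holder}).

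A Kolmogorov--Centsov / dyadic chaining argument on $K_m$ then upgrades the pointwise bound to a sup-moment bound on $\Norm{\sup_{K_m}|I|}_p$ of essentially the same form. Markov's inequality at level $\beta$ gives, to leading order in $p$ and $m$,
\[
   \log P\bigl({\textstyle\sup_{K_m}}|I|>\beta\bigr)\le \tfrac{p}{2}\log p-\tfrac{(a-1)p}{2a}\log m+Qp^{(2a-1)/(a-1)}t/m+O(p),
\]
where the second term comes from the $s^{(a-1)/(2a)}$ smallness with $s\le t/m$. Optimizing by balancing the last two terms forces $p\asymp(m\log m)^{1-1/a}$; substituting this choice back produces the advertised exponent $-Q'm^{1-1/a}(\log m)^{2-1/a}$.

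The main obstacle is the chaining step producing $\Norm{\sup_{K_m}|I|}_p$ with the right joint $(p,m)$-dependence. One has to adapt Kolmogorov--Centsov to the anisotropic rectangle $K_m$ (narrow in time, of fixed unit size in space) and track the BDG prefactors carefully, so that the time smallness $s^{(a-1)/(2a)}$ survives intact through the passage from pointwise to sup-moment and the optimization genuinely delivers the $(\log m)^{2-1/a}$ factor in the exponent, rather than, e.g., $(\log m)^{1-1/a}$ or just $\log m$. Once this supremum moment estimate is in place, the remaining Markov optimization is a standard exercise following \cite{ConusEct12Corr}.
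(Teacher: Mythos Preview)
Your plan is essentially the paper's: decompose $u=J_0+I$, invoke Lemma~\ref{L:J0LInd} for the deterministic lower bound, control $\sup_{K_m}|I|$ via a Kolmogorov-type supremum moment estimate, apply Chebyshev, and optimize over $p$. The paper obtains the moment input by citing Lemma~\ref{L:MBds}(1), whereas you sandwich $0\le u\le v$ against the Lebesgue solution and use Lemma~\ref{L:MBds}(2); your route is in fact cleaner, since part~(1) carries a $t^{-2/a}$ prefactor that is awkward on $S'\ni\{s=0\}$. The paper also packages the chaining slightly differently, working on the fixed box $[0,1]^2$ and extracting the time-smallness from $I(0,\cdot)\equiv 0$ through the H\"older seminorm (hence the exponent $\eta<1$), rather than chaining directly on the thin rectangle $K_m$.

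There is, however, a real problem in your optimization step. At your proposed optimizer $p\asymp(m\log m)^{1-1/a}$ one has $\log p\sim\frac{a-1}{a}\log m$, so the BDG contribution you kept,
\[
\tfrac{p}{2}\log p\;\sim\;\tfrac{a-1}{2a}\,p\log m,
\]
\emph{exactly cancels} (to leading order) your smallness term $-\tfrac{(a-1)p}{2a}\log m$. What survives is the positive moment-growth term $Qp^{(2a-1)/(a-1)}t/m\asymp m^{1-1/a}(\log m)^{2-1/a}$, so your displayed upper bound for $\log P$ is \emph{not} negative of the advertised order at that $p$; balancing ``the last two terms'' while ignoring the first is not legitimate here. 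The paper sidesteps this by writing the combined BDG/Kolmogorov constants as $C^p$ with $C$ independent of $p$, so no $p\log p$ term enters its $f(p)$. If you keep the $z_p^p\sim p^{p/2}$ factor honestly, the present line of argument does not deliver the exponent $m^{1-1/a}(\log m)^{2-1/a}$; you must either justify absorbing $p^{p/2}$ into the exponential (note $p^{a/(a-1)}\tau\asymp\log m$ at the optimizer, so $(p/2)\log p$ and $Qp^{(2a-1)/(a-1)}\tau$ are genuinely the same order), follow the paper's accounting, or settle for a slightly weaker rate.
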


\begin{proof}
Define $S:=S_{t,m,d,M}:=\{(s,y): t/(2m)\le s\le t/m, \; |x|\le d+M/m \}$.
By Lemma \ref{L:J0LInd}, for some constant $0<\beta\le 1/8$,
\begin{align}\label{E_:2beta}
\left(\mu * \lMr{\delta}{G}{a}(s,\cdot)\right)(x)
\ge 2\beta 1_{]-d-M/m,d+M/m[}(x)\quad \text{for all $s\in \left[t/(2m),t/m\right]$ and $x\in\R$.}
\end{align}
Hence,
\begin{align*}
 P\Bigg(&
u(s,x) < \beta 1_{]-d-M/m,d+M/m[}(x)\;\;\text{for some $\frac{t}{2m}\le s \le \frac{t}{m}$ and $x\in\R$}
\Bigg)\\
 & \le
 P\Bigg(
I(s,x) < - \beta\;\;\text{for some $(s,x)\in S$}
\Bigg) \\
&\le
 P\left(
\sup_{(s,x)\in S}|I(s,x)| >\beta
\right)\le
\beta^{-p}
\E\left[
\sup_{(s,x)\in S}|I(s,x)|^p\right],
\end{align*}
where we have applied Chebyshev's inequality in the last step.
Denote $\tau=t/m$ and $S':=\{(s,y): 0\le s\le t/m, \; |x|\le d+M/m \}$.
By the fact that $I(0,x)\equiv 0$ for all $x\in\R$,  a.s., we see that for all $0<\eta< 1-\frac{2(a+1)}{p(a-1)}$,
\begin{align}
\E\left[
\sup_{(s,x)\in S}\left|\frac{I(s,x)}{\tau^{\frac{a-1}{2a}\eta}}\right|^p\right]&\le
\E\left[
\sup_{(s,x), (s',x')\in S'}\left|\frac{I(s,x)-I(s',x')}{\left(|x-x'|^{\frac{a-1}{2}}+|s-s'|^{\frac{a-1}{2a}}\right)^\eta}\right|^p\right].
\label{E_:Holder}
\end{align}

Let us find the upper bound of \eqref{E_:Holder}.  By the Burkholder-Davis-Gundy inequality  and \cite[Proposition 4.4]{ChenDalang14FracHeat}, for some universal constant $C_1>0$,
\[
\E\left[|I(s,x)-I(s',x')|^p\right]\le C_1  \left(|x-x'|^{\frac{a-1}{2}}+|s-s'|^{\frac{a-1}{2a}}\right)^{p/2} \sup_{(t,y)\in S'} \|u(t,y)\|_p^p\:,
\]
for all $(s,x)$ and $(s',x')\in S'$. Hence, by part (1) of Lemma \ref{L:MBds}, 
\[
\sup_{(t,y)\in S'} \|u(t,y)\|_p^p \le Q^p \exp\left(Q p^{\frac{2a-1}{a-1}} \tau\right)=:C_{p,\tau},
\]
for some constant $Q:=Q(a,\LIP_\rho,\Lambda)>0$ and for all $p\in [2,\infty[\:$.
Notice that when $m$ is sufficiently large, $S\subset [0,1]^2$. Hence, the right hand side of \eqref{E_:Holder} is bounded by the same quantity with $S$ replaced by $[0,1]^2$.
Then by Kolmogorov's continuity theorem (see \cite[Theorem 1.4.1]{Kunita90Flow} and \cite[Proposition 4.2]{ChenDalang13Holder}), for some universal constant $C>0$, the
expectation on the right hand side of \eqref{E_:Holder} is bounded above by $C^p C_{p,\tau}$.

We consider the case where $p=O\left([m\log m]^{1-1/a}\right)$ as $m\rightarrow \infty$ (see \eqref{E:pA} below).
In this case, we have $p^{a/(a-1)}\tau=O(\log m)$ as $m\rightarrow\infty$ since $\tau=t/m$.
This implies that there exists some constant $Q':=Q'(\beta, \LIP_\rho, \Lambda, t)$ such that
\begin{align*}
\beta^{-p}\E\left[
\sup_{(s,x)\in S}\left|I(s,x)\right|^p\right]
&\le
Q'\, \tau^{\frac{(a-1)\eta}{2a}p}\, \exp\left(Q' p^{\frac{2a-1}{a-1}} \tau\right)\\
&=
Q' \exp\left(Q' p^{\frac{2a-1}{a-1}} \tau+\frac{(a-1)\eta}{2a}p \log(\tau)\right).
\end{align*}
By denoting $\eta = \theta \left(1-\frac{2}{p}\:
\frac{a+1}{a-1}\right)$ with $\theta \in \:]0,1[\;$, the above exponent becomes
\[
f(p):=Q' \tau  p^{\frac{2a-1}{a-1}}+\frac{\log (\tau )\:\theta\: \left(p[a-1]-2 [a+1]\right)}{2 a}.
\]
It is easy to see that  $f(p)$ for $p\ge 2$ is minimized at
\begin{align*}
p= & \left(\frac{(a-1)^2 \theta  \log (1/\tau )}{2 a
   (2 a-1) Q' \tau }\right)^{1-1/a}=
\left(\frac{(a-1)^2 \theta \: m \log(m/t)}{2 a (2 a-1) Q' t }\right)^{1-1/a}.
\end{align*}
Thus, for some constants $A:=A(\beta, \LIP_\rho, \Lambda, t)$ and $Q'':=Q''(\beta, \LIP_\rho, \Lambda, t)$,
\begin{align}\label{E:pA}
\min_{p\ge 2} f(p) \le f(p') = - Q'' m^{1-1/a} [\log(m)]^{2-1/a},\quad\text{with $p'= A  \left[m \log(m)\right]^{1-1/a}.$}
\end{align}
This completes the proof of Lemma \ref{L:LgDiv}.
\end{proof}

\begin{figure}[htbp]
\centering
 \includegraphics{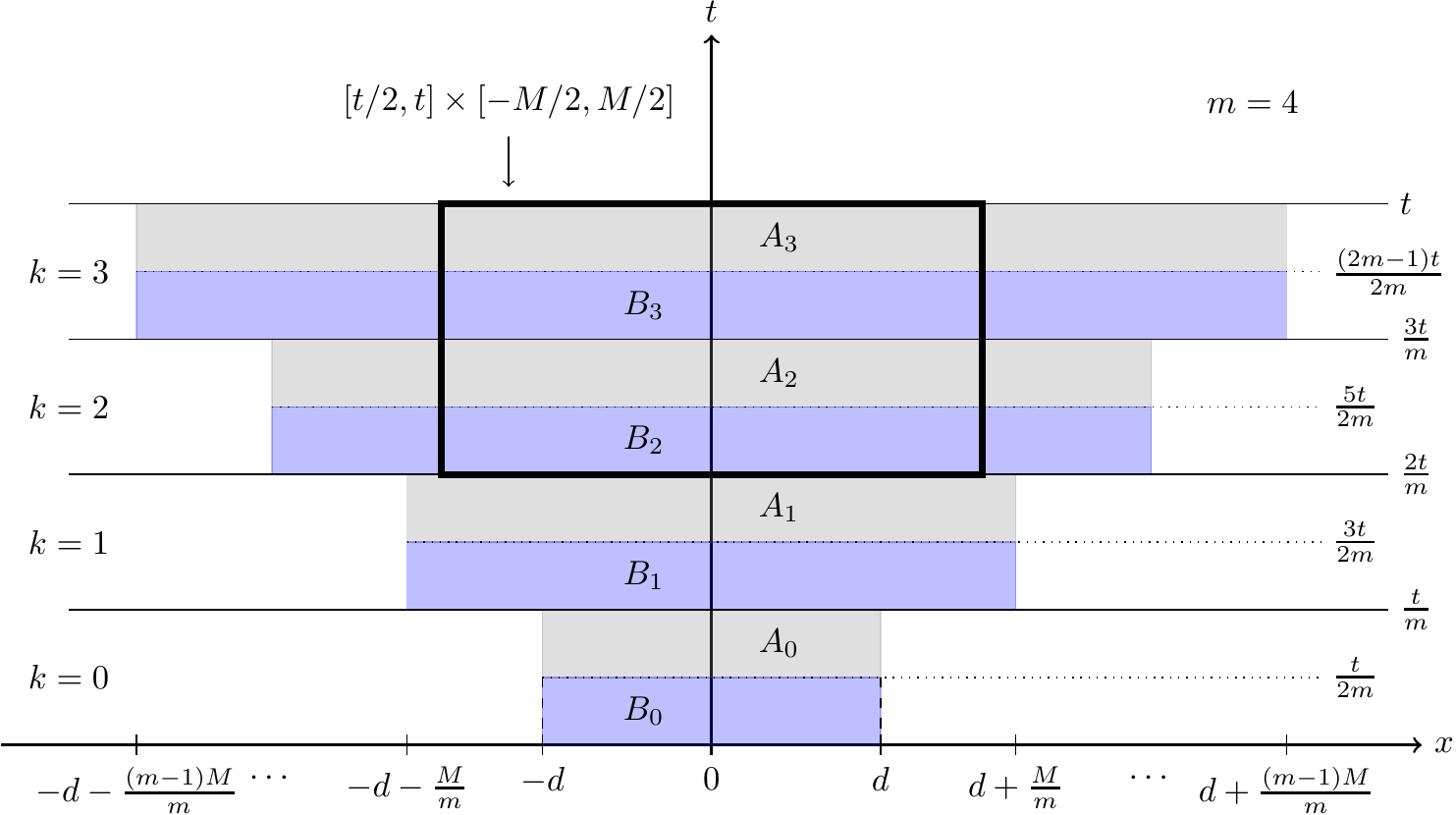}
\caption{Induction schema for the strong comparison principle.}
\label{F:SComp}
\end{figure}

\begin{proof}[Proof of Theorem \ref{T:SComp}]
Let $u(t,x):=u_2(t,x)-u_1(t,x)$ and $\tilde \rho(u):=\rho(u+u_1)-\rho(u_1)$.  Then $u(t,x)$ is, in fact, a solution to \eqref{E:FracHt} with the nonlinear function $\tilde\rho$ and the
initial data $\mu:=\mu_2-\mu_1$. We note that $\tilde\rho(0)=0$ and $\tilde\rho$ is a Lipschitz continuous function with the same Lipschitz constant as for $\rho$. For simplicity, we will use $\rho$
instead of $\tilde\rho$.  By the weak comparison principle (Theorem \ref{T:WComp}), we only need to consider the case when $\mu$ has compact support and prove that $u(t,x)>0$ for all $t>0$ and
$x\in\R$, a.s.

%

\paragraph{Case I.} We first assume that $\mu(\ud x)=f(x)\ud x$ with $f\in C(\R)$ and $f(x)\ge 0$ for all $x\in\R$.
Since $\mu>0$, there exists $x\in\R$ such that $f(x)>0$.
By the weak comparison principle (Theorem \ref{T:WComp}),
we only need to consider the case where $f(x)=1_{[-d,d]}(x)$ for some $d>0$ (this is because, if $f(x)=1_{[a,b]}(x)$, then we can use $f(x-(a+b)/2)$ as our initial function; in addition, if $f(x)=c1_{[-d,d]}(x)$, then we can consider $\tilde u(t,x):=c u(t,x)$ which is the unique solution to \eqref{E:FracHt} with the
initial function $1_{[-d,d]}(x)$ and with replacing $\rho(z)$ by $c\rho(z/c)$ which is also Lipschitz continuous with the same Lipschitz constant as $\rho(z)$).

Let $\gamma\in \:]0,1/4]$ be the constant defined in Lemma \ref{L:J0LInd} and let $\beta:=\gamma/2$.
For any $M>0$ and $k=0,1,\cdots,m-1$, define the events
\begin{align*}
A_k &:= \left\{u(s,x)\ge \beta^{k+1} 1_{S_k^m}(x)\:\text{for all $s\in\left[\frac{(2k+1)t}{2m},\frac{(k+1)t}{m} \right]$ and $x\in\R$}\right\},\\
B_k &:= \left\{u(s,x)\ge \beta^{k+1} 1_{S_k^m}(x)\:\text{for all $s\in\left[\frac{kt}{m},\frac{(2k+1)t}{2m} \right]$ and $x\in\R$}\right\}, 
\end{align*}
where 
\[
S^m_k :=\: \left ]-d-\frac{Mk}{m},d+\frac{Mk}{m}\right[\:.
\]
See Figure \ref{F:SComp} for an illustration of the schema.

By Lemma \ref{L:LgDiv}, there are constants $Q>0$ and $m_0>0$ such that for all $m\ge m_0$,
\[
P(A_0)\ge 1-c(m),
\]
where
\begin{equation}\label{E:CM}
c(m):=\exp\left(- Q \: m^{1-1/a} [\log(m)]^{2-1/a}\right).
\end{equation}
By definition, on the event $A_{k-1}$, $k\ge 1$,
\[
u\left(\frac{k t}{m},x\right)\ge  \beta^{k} 1_{S_{k-1}^m}(x),\quad\text{for all $x\in\R$.}
\]
Let $w_k(s,x)$ be the solution to the following SPDE:
\begin{align*}
 \begin{cases}
  \left(\displaystyle\frac{\partial}{\partial t} - \Dxa \right) w_k(s,x) =
\rho_k\left(w_k(s,x)\right) \W_k(s,x),& s\in \R_+^*:=\;]0,+\infty[\;,\: x\in\R,\cr
w_k(0,x) = 1_{S_{k-1}^m}(x),
 \end{cases}
\end{align*}
where $\rho_k(x):=\beta^{-k} \rho(\beta^{k} x)$ and $\{\W_k(s,x):=\W(s+kt/m,x)\}_{k\ge 1}$ is the time-shifted white noise.
Note that $\rho_k(x)$ is also a Lipschitz continuous  function with the
same Lipschitz constant as for $\rho$ and $\rho_k(0)=0$. Thus, by Lemma \ref{L:LgDiv}, we see that by the same constants $Q$ and $m_0$ as in \eqref{E:CM},  for all $m\ge m_0$,
\begin{align}\label{E:wk}
P\left( w_k(s,x) \geq \beta 1_{S_{k}^m}(x) \: \text{for all $s \in \left[\frac{t}{2m}, \frac{t}{m} \right]$} \:\text{and $x\in\R$}\right) \geq 1-c(m).
 \end{align}
Let $v(s,x)$ be a solution to \eqref{E:FracHt} with the initial data $\mu(\ud x):=\beta^k 1_{S_{k-1}^m}(x)\ud x$,
subject to the above time-shifted noise $\W_k$ with the same $\rho$. Then $v(s,x)=\beta^k w_k(s,x)$ a.s. for all $s\ge 0$ and $x\in\R$.
Since $u(s+kt/m,x) \ge v(s,x)$ for all $x\in\R$ and $s\ge 0$ by the Markov property and the weak comparison principle (Theorem \ref{T:WComp}), \eqref{E:wk} implies that
\[
P\left(A_k\mid \calF_{kt/m}\right)\ge 1- c(m), \quad\text{a.s. on $A_{k-1}$ for $1\le k\le m-1$.}
\]
Hence,
\[
P\left(A_k\mid A_{k-1}\cap \cdots \cap A_0\right) \ge 1-c(m),\quad\text{for all $1\le k\le m-1$.}
\]
Furthermore, because $A_0\subseteq B_0$, on the event $A_0$, we see that
\[
P(B_0) \ge P(A_0) \ge 1-c(m).
\]
Similarly, one can prove that
\[
P\left(B_k\mid B_{k-1}\cap \cdots \cap B_0\right) \ge 1-c(m),\quad\text{for all $1\le k\le m-1$.}
\]
Then,
\begin{align}\notag
 P\left(\cap_{0\le k\le m-1} \left[A_k \cap B_k\right] \right)
&\ge 1-\left(1-P\left(\cap_{0\le k\le m-1}A_k\right)\right)-\left(1-P\left(\cap_{0\le k\le m-1}B_k\right)\right)\\ \notag
&\ge (1-c(m))^{m-1}P(A_0)+ (1-c(m))^{m-1}P(B_0) -1\\
&\ge 2(1-c(m))^{m}-1.
\label{E_:abcup}
\end{align}
Therefore, for all $t>0$ and $M>0$,
\begin{align*}
 P\left(u(s,x)> 0 \;\; \text{for all $t/2\le s\le t$ and $|x|\le M/2$}\right)
&\ge
\lim_{m\rightarrow\infty} P\left(\cap_{0\le k\le m-1} \left[A_k \cap B_k\right] \right)
\\
&\ge \lim_{m\rightarrow\infty} 2(1-c(m))^{m}-1 =1.
\end{align*}
Since $t$ and $M$ are arbitrary, this completes the proof for Case I.

\paragraph{Case II.} Now we assume that $\mu\in\calM_{a,+}^*(\R)$.
We only need to prove that for each $\epsilon>0$,
\begin{align}\label{E:uEpsilon}
P\left(u(t,x)>0\; \text{for $t\ge \epsilon$ and $x\in\R$}\right) =1.
\end{align}
Fix $\epsilon>0$. Denote $V(t,x):=u(t+\epsilon,x)$. By the Markov property, $V(t,x)$ solves \eqref{E:FracHt} with
the time-shifted noise $\W_\epsilon (t,x):=\W(t+\epsilon,x)$ starting from $V(0,x)=u(\epsilon,x)$, i.e.,
\begin{equation}
 \label{E:VInt}
\begin{aligned}
 V(t,x) &= \left(u(\epsilon,\circ)* \lMr{\delta}{G}{a}(t,\circ)\right)(x)
+ \iint_{[0,t]\times\R} \rho(V(s,y)) \lMr{\delta}{G}{a}(t-s,x-y)W_\epsilon(\ud s,\ud y)\\
&=: \widetilde{J}_0(t,x) + \widetilde{I}(t,x).
\end{aligned}
\end{equation}

We first claim that
\begin{align}\label{E:uepsilonx}
P\left(u(\epsilon,x)= 0, \;\text{for all $x\in\R$}\right) =0.
\end{align}
Notice that by Theorem \ref{T:Holder}, the function $x\mapsto u(t,x)$ is H\"older continuous over $\R$
a.s. The weak comparison principle (Theorem \ref{T:WComp}) shows that $u(t,x)\ge 0$ a.s.
Hence, if \eqref{E:uepsilonx} is not true, then by the Markov property and the strong comparison principle in Case I, at all times $\eta\in [0, \epsilon]$, with some strict positive probability,
$u(\eta,x)=0$ for all $x\in\R$, which contradicts Theorem \ref{T:WeakSol} as $\eta$ goes to zero.
Therefore, there exists a sample space $\Omega'$ with $P(\Omega')=1$ such that for each $\omega\in\Omega'$,
there exists $x\in\R$ such that $u(\epsilon,x,\omega)>0$.

Since $u(\epsilon,x,\omega)$ is continuous at $x$, one can find two nonnegative constants $c$ and $\beta$ such that
$u(\epsilon,y,\omega)\ge \beta 1_{[x-c,x+c]}(y)$ for all $y\in\R$.
Then Case I implies that
\[
P\left(V_{\omega}(t,x)>0\;\text{for all $t\ge 0$ and $x\in\R$}\right) =1,
\]
where $V_\omega$ is the solution to \eqref{E:VInt} starting from $u(\epsilon,x,\omega)$. Therefore, \eqref{E:uEpsilon} is true. 
This completes the whole proof of Theorem \ref{T:SComp}.
\end{proof}

\section{Proof of Theorem \ref{T1:Rates}}\label{S:Rates}
We first prove part (1).
For any compact sets $K\subseteq \R_+^*\times\R$, one can find $\eta>0$, $T>0$ and $N>0$ such that
$K\subseteq [\eta,T]\times [-N,N]$. Then choose $M=2NT/\eta$.
If $\mu(\ud x)=f(x)\ud x$ with $f\in C(\R)$ and $f(x)\ge 0$ for all $x\in\R$, then following the proof of Theorem \ref{T:SComp}, from \eqref{E_:abcup}, we see that
\begin{align*}
  P\left(\inf_{(s,x)\in K}u(s,x)< \beta^{m} \right)&
\le 1- P\left(\cap_{0\le k\le m-1} \left[A_k \cap B_k\right] \right)\\
&\le 2\left[1-(1-c(m))^m\right],
\end{align*} where $c(m)$ is defined in \eqref{E:CM}.
Because $\log(1-x)\ge -2 x$ for $0<x\le 1/2$, when $m$ is sufficiently large, so that
\begin{align}\label{E:1/2}
m \exp\left(- Q \: m^{1-1/a} [\log(m)]^{2-1/a}\right) \le 1/2,
\end{align}
we have that
\[
(1-c(m))^m\ge \exp\left(-2\: m \exp\left(- Q \: m^{1-1/a} [\log(m)]^{2-1/a}\right) \right).
\]
Since $1-e^{-x}\le x$ for $x\geq 0$, if $m$ is sufficiently large such that \eqref{E:1/2} holds, then
\[
\left[1-(1-c(m))^m\right]\le
2\: m \exp\left(- Q \: m^{1-1/a} [\log(m)]^{2-1/a}\right).
\]
If $\mu\in\calM_{a,+}^*(\R)$, then we follow the notation of Case II in the proof of Theorem \ref{T:SComp} with
$\epsilon =\eta/2$. Using the Markov property, for each initial data $u(\epsilon,x,\omega)$, we apply the previous case
to get \eqref{E:Rates1} with  $u(t,x)$ replaced by $V_\omega(t-\epsilon,x)$.
Because the upper bound of which does not depend on $\omega$ and $u(\epsilon,x)$ is independent of $V(t,x)$,
\eqref{E:Rates1} holds for $V(t-\epsilon,x) =u(t,x)$.
This completes the proof of part (1) of Theorem \ref{T1:Rates}.

\bigskip
Now we prove part (2).
Since $f$ is a continuous function, there exists finite constant $c>0$  such that $f(x)\ge c 1_{D}(x)$. Without loss of generality, we assume
that $c=1$.   Let $v(t,x)$ be the solution to \eqref{E:FracHt} with the initial data $1_{D}(x)\ud x$.
By Theorem \ref{T:WComp},  $u(t,x)\ge v(t,x)$ for all $t>0$ and $x\in\R$, a.s.
Hence, it suffices to prove that for all $n\ge 1$,
\[
P\left(\inf_{x\in D\:}\inf_{t\in\:]0,T]} v(t,x)\le  e^{-n}\right) \le A \exp\left(-B(n\log(n))^{(2a-1)/a}\right).
\]
We define a set of $\{\calF_t\}_{t\ge 0}$-stopping times as follows:  $T_0:=0$, and
\[
T_{k+1}:=\inf\left\{s>T_k: \inf_{x\in \:D}v(s,x)\le  e^{-k-1}\right\},
\]
where we use the convention that $\inf \phi=\infty$.

Similar to the proof of Theorem \ref{T:SComp},  let $\{\W_k(t,x): k\in\bbN\}$ be time-shifted space-time white noises
and let $v_k(t,x)$ be the unique solution to \eqref{E:FracHt} subject to the noise $\W_k$, starting from $v_k(0,x)=e^{-(k-1)}1_{D}(x)$. Then
\[
w_k(t,x):=e^{k-1}v_k(t,x)
\]
solves
\begin{align*}
 \begin{cases}
  \left(\displaystyle\frac{\partial}{\partial t} - \Dxa \right) w_k(t,x) =
\rho_k\left(w_k(t,x)\right) \W_k(t,x),& t\in \R_+^*:=\;]0,+\infty[\;,\: x\in\R,\cr
w_k(0,x) = 1_{D}(x),
 \end{cases}
\end{align*}
where $\rho_k(x):=e^{k-1}\rho(e^{-(k-1)}x)$.
From the definitions of the stopping times, we  see that
\[
e^{k-1} v(T_{k-1},x)\ge 1_{D}(x),\quad \text{for all $x\in\R$, a.s. on $\{T_{k-1}<\infty\}$, for all $k\ge 1$}.
\]
Therefore, by the strong Markov property and the weak comparison principle in Theorem \ref{T:WComp},
we obtain that on $\{T_{k-1}<\infty\}$,
\begin{equation*}
P\left(T_{k}-T_{k-1} \leq \frac{2t}{n} \Big{|} \calF_{T_{k-1}}\right)\leq P\left(\sup_{(t,x)\in \;]0,2T/n]\times D} \left| w_k(t,x)-w_k(0,x)\right| \geq  1-1/e\right).
\end{equation*}
Since $\rho_k$ is Lipschitz continuous with the same Lipschitz constant as $\rho$, a suitable form of the Kolmogorov continuity theorem (see the arguments in the proof of Lemma \ref{L:LgDiv})
implies that for all $\eta \in\;]0,1-2(a+1)/(p(a-1))[\;$,
there exists a finite constant $Q>0$, not depending on $p$, $n$ and $\tau$, such that for all $p\geq 2, n\geq 1,$ and $\tau \in \;]0,1[\;$,
%
\begin{equation}\label{E:HolderTime}
\E \left[ \sup_{(s,x)\in \;]0,\tau]\times D} \left|w_k(s,x)-w_k(0,x)\right|^p \right] \leq Q\, \tau^{p\eta(a-1)/2a}\exp\left(Q\,\tau  p^{(2a-1)/(a-1)}\right).
\end{equation}
Letting $\tau:=2t/n$ for $0<t<T$ and minimizing the right hand side of \eqref{E:HolderTime} over $p$, we obtain that for some finite constant $Q'>0$, not depending on $n$,
\begin{equation*}
P\left(T_{k}-T_{k-1} \leq \frac{2t}{n} \Big{|} \calF_{T_{k-1}}\right)\leq Q' \exp\left\{-Q' \,n^{(a-1)/a}(\log n)^{(2a-1)/a}\right\}.
\end{equation*}
Therefore, we obtain the following:
\begin{align*}
&P\left(\inf_{x\in D\:}\inf_{t\in\:]0,T]} v(t,x)\le  e^{-n}\right)\leq P\{T_n\leq t\}\\
& \leq P\big(\text{at least $\lfloor n/2 \rfloor$-many distinct values $k\in \{1,2, \dots, n\}$ such that $T_k-T_{k-1}\leq 2t/n$} \big)\\
& \leq {n\choose \lfloor n/2 \rfloor}c_1^{\lfloor n/2 \rfloor} \exp\left\{-c_2{\lfloor n/2 \rfloor} \,n^{(a-1)/a}(\log n)^{(2a-1)/a} \right\}.
\end{align*}
This completes the proof of Theorem \ref{T1:Rates}. \myEnd

\section{Proof of Theorem \ref{T:Approx}}\label{S:Approx}
\begin{proof}[Proof of Theorem \ref{T:Approx}]
Fix $\epsilon>0$. By Theorems \ref{T:ExUni},  both $u(t,x)$ and $u_\epsilon(t,x)$ are well-defined solutions to \eqref{E:FracHt}.
By Lipschitz continuity of $\rho$ and the moment formulas \eqref{E:MomUp},
\begin{align*}
\Norm{u(t,x)-u_\epsilon(t,x)}_2^2 \le&\quad
\left[\left((\mu \: \psi_\epsilon)*\lMr{\delta}{G}{a}(\epsilon,\cdot)*\lMr{\delta}{G}{a}(t,\cdot)\right)(x)
-\left(\mu*\lMr{\delta}{G}{a}(t,\cdot)\right)(x)\right]^2\\
&+ \LIP_\rho^2 \int_0^t\ud s\int_\R\ud y
\Norm{u(s,y)-u_\epsilon(s,y)}_2^2 \lMr{\delta}{G}{a}^2(t-s,x-y).
\end{align*}
Denote the first part on the above upper bound as $I_\epsilon(t,x)$.
Let $\widetilde{\calK}(t,x):=\calK(t,x;\LIP_\rho)$ and denote $f_\epsilon(t,x):=\Norm{u(t,x)-u_\epsilon(t,x)}_2^2$. Then formally,
\[
(f_\epsilon\star \widetilde{\calK})(t,x) \le \left(I_\epsilon\star \widetilde{\calK}\right)(t,x)
+ \LIP_\rho^2 \left(f_\epsilon\star \lMr{\delta}{G}{a}^2\star \widetilde{\calK}\right)(t,x).
\]
Using the fact that $ \left(\LIP_\rho^2\lMr{\delta}{G}{a}^2\star \widetilde{\calK}\right)(t,x)= \widetilde{\calK}(t,x)-\LIP_\rho^2\lMr{\delta}{G}{a}^2(t,x)$, one has that
\[
\left(f_\epsilon\star\lMr{\delta}{G}{a}^2\right)(t,x)\le \LIP_\rho^{-2} \left(I_\epsilon\star\widetilde{\calK}\right)(t,x).
\]
Hence, it reduces to show that
\begin{align}\label{E_:IKlim0}
\lim_{\epsilon\rightarrow 0} \left(I_\epsilon\star\widetilde{\calK}\right)(t,x) =0,\quad\text{for all $t>0$ and $x\in\R$}.
\end{align}
We first assume that $a\in \:]1,2[\:$. Notice that
\[
I_\epsilon(t,x) =
\left[
\left((\mu\:\psi_\epsilon)*\left[\lMr{\delta}{G}{a}(t+\epsilon,\cdot)-\lMr{\delta}{G}{a}(t,\cdot)\right]\right)(x)
+
\left([\mu\psi_\epsilon-\mu]*\lMr{\delta}{G}{a}(t,\cdot)\right)(x)
\right]^2.
\]
By \cite[(4.3)]{ChenDalang14FracHeat}, for $0<t\le T$ and $x\in\R$,
\begin{align}\label{E_:InitD}
(|\mu \psi_\epsilon| * \lMr{\delta}{G}{a}(t,\cdot))\left(x\right)\le
(|\mu| * \lMr{\delta}{G}{a}(t,\cdot))\left(x\right) \le  C_T t^{-1/a},
\end{align}
with $C_T:=A_a\: K_{a,0}\: (T\vee 1)^{1+1/a}$, where $A_a$ is defined as
\begin{align}
 \label{E:Aa}
A_a:=\sup_{y\in\R}\int_\R \frac{|\mu|(\ud z)}{1+|y-z|^{1+a}}.
\end{align}
Hence, if $0<t +\epsilon\le T$, then
\[
I_\epsilon(t,x)\le 4 C_T t^{-1/a} \left[
\left(|\mu\:\psi_\epsilon|*\left|\lMr{\delta}{G}{a}(t+\epsilon,\cdot)-\lMr{\delta}{G}{a}(t,\cdot)\right|\right)(x)
+
\left(|\mu\psi_\epsilon-\mu|*\lMr{\delta}{G}{a}(t,\cdot)\right)(x)
\right].
\]
Now use the upper bound on $\widetilde{\calK}(t,x)$ in \eqref{E:UpBd-K},
\begin{equation}
 \label{E_:IcalK}
\begin{aligned}
 \left(I_\epsilon\star \widetilde{\calK}\right)(t,x)
\le&  2 C_T C' C_T' \int_0^t \ud s \; s^{-1/a} (t-s)^{-1/a}\left[ g_1(t,s,\epsilon, x)+g_2(t,x,\epsilon,x)\right]
\end{aligned}
\end{equation}
where $C':=C'(a,\delta,\LIP_\rho)$ is defined in Proposition \ref{P:UpperBdd-K},
$C_T' =1+T^{1/a}\exp\left((\LIP^2_\rho\Lambda\Gamma(1/a^*))^{a^*} T\right)$,
$1/a^*+1/a=1$, and
\begin{align*}
 g_1(t,s,\epsilon,x) &=\left(|\mu\:\psi_\epsilon|*\left|\lMr{\delta}{G}{a}(s+\epsilon,\cdot)-\lMr{\delta}{G}{a}(s,\cdot)\right|*\lMr{\delta}{G}{a}(t-s,\cdot)\right)(x) ,\\
g_2(t,s,\epsilon,x) &=\left(|\mu\psi_\epsilon-\mu|*\lMr{\delta}{G}{a}(s,\cdot)*\lMr{\delta}{G}{a}(t-s,\cdot)\right)(x).
\end{align*}
By the semigroup property and the dominated convergence theorem,
\[
g_2(t,s,\epsilon,x) =\left(|\mu\psi_\epsilon-\mu|*\lMr{\delta}{G}{a}(t,\cdot)\right)(x)\rightarrow 0,\quad\text{as $\epsilon\rightarrow 0$}.
\]
Clearly, $g_2(t,s,\epsilon,x)\le 2 (\mu*\lMr{\delta}{G}{a}(t,\cdot))(x)$.
Again, by the dominated convergence theorem and by bounding
$\lMr{\delta}{G}{a}(t+\epsilon,\cdot)$ using \cite[(4.3)]{ChenDalang14FracHeat}, one can show that $\lim_{\epsilon\rightarrow 0}g_1(t,s,\epsilon,x)  =0$.
Then by the semigroup property and \eqref{E_:InitD},
for $0<t+\epsilon \le T$,
\begin{align*}
g_1(t,s,\epsilon,x)\le&  \left(|\mu|*(\lMr{\delta}{G}{a}(s+\epsilon,\cdot)+\lMr{\delta}{G}{a}(s,\cdot))* \lMr{\delta}{G}{a}(t-s,\cdot)\right)(x)\\
=&\left(|\mu|*\lMr{\delta}{G}{a}(t+\epsilon,\cdot)\right) (x)
+\left(|\mu|*\lMr{\delta}{G}{a}(t,\cdot)\right) (x)\\
\le& \:C_T\left((t+\epsilon)^{-1/a}+t^{-1/a}\right)\le 2 C_T t^{-1/a}.
\end{align*}
Hence, both upper bounds on $g_1$ and $g_2$ are integrable over $\ud s$ in \eqref{E_:IcalK}. Therefore, by another application of the dominated convergence theorem, we have proved
\eqref{E_:IKlim0}.
Since both functions $f_\epsilon(t,x)$ and $\lMr{\delta}{G}{a}(t,x)$ are nonnegative and the support of $\lMr{\delta}{G}{a}(t,x)$ is over $\R$, we
can conclude that $\lim_{\epsilon\rightarrow
0}f_\epsilon(t,x) =0$ for almost all $t>0$ and $x\in\R$.

When $a=2$, one can apply the dominated convergence theorem to show that $I_\epsilon(t,x)\rightarrow 0$ as $\epsilon\rightarrow 0$. Another application of the dominated convergence theorem shows that
\eqref{E_:IKlim0} is true. The rest is same as the previous case.
We leave the details for interested readers.
This completes the proof of Theorem \ref{T:Approx}.
\end{proof}

\section{Proof of Theorem \ref{T:Holder}}\label{S:Holder}

Without loss of generality, we assume that $\mu\ge 0$.
Let $u(t,x)$ be the solution to \eqref{E:FracHt} starting from $\mu\in\calM_a(\R)$.
Fix $T>0$ and $\epsilon\in\;]0,(T/2)\wedge 1]$.
Denote $V(t,x):=u(t+\epsilon,x)$. By the Markov property, $V(t,x)$ solves \eqref{E:FracHt} with
the time-shifted noise $\W_\epsilon (t,x):=\W(t+\epsilon,x)$ starting from $V(0,x)=u(\epsilon,x)$.
Recall the integral form $ V(t,x)= \widetilde{J}_0(t,x) + \widetilde{I}(t,x)$ in \eqref{E:VInt}.

\paragraph{Time increments}
Recall that $u(t,x)=J_0(t,x)+ I(t,x)$.
Let $0<\epsilon\le t\le t'\le T-\epsilon$.
So
\begin{align*}
\Norm{I(t+\epsilon,x)-I(t'+\epsilon,x)}_p^2
\le& \quad 2\Norm{u(t+\epsilon,x)-u(t'+\epsilon,x)}_p^2\\
&+ 2\left|J_0(t+\epsilon,x)-J_0(t'+\epsilon,x)\right|^2,
\end{align*}
with
\begin{align*}
 \Norm{u(t+\epsilon,x)-u(t'+\epsilon,x)}_p^2 =&
\Norm{V(t,x)-V(t',x)}_p^2\\
\le &
2\Norm{\widetilde{I}(t,x)-\widetilde{I}(t',x)}_p^2 + 2\Norm{\widetilde{J}_0(t,x)-\widetilde{J}_0(t',x)}_p^2.
\end{align*}
Notice that for all $p\ge 2$, by  the Burkholder-Davis-Gundy inequality (see \cite[Lemma 3.3]{ChenDalang13Heat}),
\begin{align*}
  \Norm{\widetilde{I}(t,x)-\widetilde{I}\left(t',x\right)}_p^2
\le  2 z_p^2 \Lip_\rho^2 I_1\left(t,t',x\right)
  +
  2 z_p^2  \Lip_\rho^2 I_2\left(t,t',x\right)\;,
\end{align*}
where $z_p\le 2\sqrt{p}$ and $z_2=1$, and
\begin{gather*}
I_1\left(t,t',x\right)=\iint_{\left[0,t\right]\times\R}\ud s \ud y\:
  \left(\lMr{\delta}{G}{a}\left(t-s,x-y\right)-\lMr{\delta}{G}{a}(t'-s,x-y) \right)^2  \left(\Vip^2+\Norm{V\left(s,y\right)}_p^2\right),\\
I_2\left(t,t',x\right)
  = \iint_{\left[t,t'\right]\times\R}\ud s \ud y\:
\lMr{\delta}{G}{a}^2\left(t'-s,x-y\right)\left(\Vip^2+ \Norm{V\left(s,y\right)}_p^2\right).
\end{gather*}
By part (2) of Lemma \ref{L:MBds}, for some finite constant $Q:=Q(a,\delta,\Lip_\rho,\Vip,\mu,p,\epsilon,T)>0$,
\[
\sup_{(s,y)\in[0,t]\times\R} \Norm{V(s,y)}_p^2
=\sup_{(s,y)\in[\epsilon,t+\epsilon]\times\R} \Norm{u(s,y)}_p^2
\le Q.
\]
Then apply  \cite[Proposition 4.4]{ChenDalang14FracHeat} to see that  for some finite constant $C_1=C_1(a,\delta)>0$,
\begin{align}\label{E:ItidT}
 \Norm{\widetilde{I}(t,x)-\widetilde{I}\left(t',x\right)}_p^2
\le C_1 z_p^2 \Lip_\rho^2 Q
\: |t'-t|^{1-1/a}.
\end{align}

By Minkowski's integral inequality and \eqref{E:timediff} below, for some finite constant $C_2:=C_2(a)>0$,
\begin{align*}
\Norm{\widetilde{J}_0(t,x)-\widetilde{J}_0(t',x)}_p^2
\le & \sup_{y\in\R}\Norm{u(\epsilon,y)}_p^2 \left(\int_\R \ud y \: \left|\lMr{\delta}{G}{a}(t,y)-\lMr{\delta}{G}{a}(t',y)\right|\right)^2\\
\le &  C_2  \: Q\: \left[\log(t'/t)\right]^2\le   C_2 \: t^{-2}  Q\:  |t'-t|^2,
\end{align*}
where in the last step, we have applied the inequality $\log(1+x)\le x$ for $x>-1$.
Because $|t'-t|\le T^{\frac{a+1}{2a}} |t'-t|^{\frac{a-1}{2a}}$ and $t\ge \epsilon$, we have that
\begin{align}\label{E:JTilde}
\Norm{\widetilde{J}_0(t,x)-\widetilde{J}_0(t',x)}_p^2 \le
C_2\: \epsilon^{-2} T^{\frac{a+1}{a}} \: Q\:  |t'-t|^{\frac{a-1}{a}}.
\end{align}
Similarly,
\begin{align*}
\left|J_0(t+\epsilon,x)-J_0(t'+\epsilon,x)\right|^2 \le &\sup_{y\in\R} J_0^2(\epsilon,y)
\left(\int_\R\ud y \left| \lMr{\delta}{G}{a}(t,x)-\lMr{\delta}{G}{a}(t',x)\right|\right)^2\\
\le& C_2\: \epsilon^{-2} \: T^{\frac{a+1}{a}} \: Q\:  |t'-t|^{\frac{a-1}{a}}.
\end{align*}

\paragraph{Space increments}
Fix $t\ge \epsilon$. Let $x, x'\in [-T,T]$. Then
\begin{align*}
\Norm{I(t+\epsilon,x)-I(t+\epsilon,x')}_p^2
\le& \quad 2\Norm{u(t+\epsilon,x)-u(t+\epsilon,x')}_p^2\\
&+ 2\left|J_0(t+\epsilon,x)-J_0(t+\epsilon,x')\right|^2,
\end{align*}
with
\begin{align*}
 \Norm{u(t+\epsilon,x)-u(t+\epsilon,x')}_p^2 \le &
2\Norm{\widetilde{I}(t,x)-\widetilde{I}(t,x')}_p^2 + 2\Norm{\widetilde{J}_0(t,x)-\widetilde{J}_0(t,x')}_p^2.
\end{align*}
For $p\ge 2$, by the Burkholder-Davis-Gundy inequality and \cite[Proposition 4.4]{ChenDalang14FracHeat},
\begin{align*}
  \Norm{\widetilde{I}(t,x)-\widetilde{I}\left(t,x'\right)}_p^2
\le&  2 z_p^2 \Lip_\rho^2 \iint_{\left[0,t\right]\times\R}\ud s \ud y\:
  \left(\lMr{\delta}{G}{a}\left(t-s,x-y\right)-\lMr{\delta}{G}{a}(t-s,x'-y) \right)^2\\
&\hspace{3em}\times\left(\Vip^2+\Norm{V\left(s,y\right)}_p^2\right)\\
\le & 2 z_p^2 \Lip_\rho^2 Q |x'-x|^{a-1}.
\end{align*}
By the Minkowski's integral inequality and \eqref{E:GL1}, for some finite constant $C_3:=C_3(a)>0$,
\begin{align}\notag
\Norm{\widetilde{J}_0(t,x)-\widetilde{J}_0(t,x')}_p^2
\le & \sup_{y\in\R}\Norm{u(\epsilon,y)}_p^2 \left(\int_\R \ud y \: \left|\lMr{\delta}{G}{a}(t,x-y)-\lMr{\delta}{G}{a}(t,x'-y)\right|\right)^2\\ 
\le &  C_3  \: Q\: t^{-1/a}|x'-x|\le  C_3  \: Q\: \epsilon^{-1/a}(2T)^{2-a}|x'-x|^{a-1}.
\label{E:JTildeX}
\end{align}
Similarly,
\begin{align*}
\left|J_0(t+\epsilon,x)-J_0(t+\epsilon,x')\right|^2 \le &\sup_{y\in\R} J_0^2(\epsilon,y)
\left(\int_\R\ud y \left| \lMr{\delta}{G}{a}(t,x)-\lMr{\delta}{G}{a}(t,x')\right|\right)^2\\
\le& C_3  \: Q\: \epsilon^{-1/a}(2T)^{2-a}|x'-x|^{a-1}.
\end{align*}

Finally, combining the two cases, we see that for all compact sets $D\subseteq \R_+^*\times\R$, one can find
$T>0$, $\epsilon\in\;]0,(T/2)\wedge 1]$, such that
$D\subseteq K(\epsilon,T):=[2\epsilon,T]\times[-T,T]$. There is
some finite constant $Q':=Q'(a,\delta,\Lip_\rho,\Vip,\mu,p,\epsilon,T)>0$ such that
for all $(t,x)$ and $(t',x')\in D$,
\begin{align*}
\Norm{I(t,x)-I(t',x')}_p^2\le& Q'\left(|t'-t|^{1-1/a}+|x'-x|^{a-1}\right).
\end{align*}
Then the H\"older continuity follows from Kolmogorov's continuity theorem (see \cite[Theorem 1.4.1]{Kunita90Flow} and \cite[Proposition 4.2]{ChenDalang13Holder}).
Note that $J_0(t,x)$ belongs to $C^{\infty}(\R_+^*\times\R)$ (see \cite[Lemma 4.9]{ChenDalang14FracHeat}).
This completes the proof of Theorem \ref{T:Holder}. \myEnd

\section{Proof of Theorem \ref{T:WeakSol}} \label{S:WeakSol}
The case when $a=2$ is proved in \cite[Proposition 3.4]{ChenDalang13Holder}.
Assume that $1<a<2$. Fix $\phi\in C_c(\R)$. For simplicity, we only prove the case where $\rho(u)=\lambda u$ and $\mu\ge 0$.
As in the proof  \cite[Proposition 3.4]{ChenDalang13Holder},
we only need to prove that
\[
\lim_{t\rightarrow 0_+} \int_\R \ud x \: I(t,x) \phi(x) = 0 \quad\text{in
$L^2(\Omega)$}.
\]
Denote $L(t):=\int_\R I(t,x) \phi(x)\ud x$.
By the stochastic Fubini theorem (see \cite[Theorem 2.6, p. 296]{Walsh86}),
whose assumptions are easily checked,
\[
L(t) =  \int_0^t \int_\R \left(\int_\R \ud x\;  \lMr{\delta}{G}{a}(t-s,x-y) \phi(x)\right)
\rho(u(s,y)) W(\ud s,\ud y).
\]
Hence, by \Itos isometry,
\begin{align*}
\E\left[L(t)^2\right]=
\lambda^2 \int_0^t \ud s \int_\R  \ud y
\left(\int_{\R} \ud x\; \lMr{\delta}{G}{a}(t-s,x-y)\phi(x)\right)^2
\Norm{u(s,y)}_2^2 .
\end{align*}
Assume that $t\le 1$. Since for some constant $C>0$, $|\phi(x)|\le C \lMr{\delta}{G}{a}(1,x)$ for all $x\in\R$, we can apply the semigroup property to get
\begin{align*}
\E\left[L(t)^2\right]\le
C^2\lambda^2\Lambda \int_0^t \ud s \frac{1}{(t+1-s)^{1/a}}
\int_\R  \ud y
\lMr{\delta}{G}{a}(t+1-s,y)
\Norm{u(s,y)}_2^2,
\end{align*}
where the constant $\Lambda$ is defined in \eqref{E:Cst-dLa}.
Apply the moment formula \eqref{E:MomUp}, 
\begin{align*}
\E\left[L(t)^2\right]\le
C^2\lambda^2\Lambda \left[L_1(t)+L_2(t)\right],
\end{align*}
with
\[
L_1(t):= \int_0^t\ud s \frac{1}{(t+1-s)^{1/a}} \int_\R \ud y \: J_0^2(s,y) \lMr{\delta}{G}{a}(t+1-s,y),
\]
and
\[
L_2(t):= \int_0^t\ud s \frac{1}{(t+1-s)^{1/a}} \int_\R \ud y \: \left(J_0^2\star\calK\right)(s,y) \lMr{\delta}{G}{a}(t+1-s,y).
\]
We first consider $L_1(t)$. By \cite[(4.20)]{ChenDalang14FracHeat}, for some constant $C_1:=C_1(a,\delta,\mu)>0$,
$J_0(t,x)\le C_1 t^{-1/a}$. Thus,
\begin{align*}
L_1(t)\le &C_1 \int_0^t\ud s \frac{1}{(t+1-s)^{1/a}s^{1/a}} \int_\R \ud y \: J_0(s,y) \lMr{\delta}{G}{a}(t+1-s,y)\\
=&C_1 J_0(t+1,0)  \int_0^t\ud s \frac{1}{(t+1-s)^{1/a}s^{1/a}} \\
\le &C_1 J_0(t+1,0)  \int_0^t\ud s \frac{1}{(1-s)^{1/a}s^{1/a}} \rightarrow 0, \quad\text{as $t\rightarrow 0$.}
\end{align*}
The case for $L_2(t)$ can be proved in a similar way, where one needs to apply \eqref{E:UpBd-K}. We leave the details for interested readers. This completes the proof of Theorem
\ref{T:WeakSol}. \myEnd

\section*{Appendix}

Recall the kernel function $\tlMr{\epsilon}{\delta}{R}{a}(t,x)$ defined in \eqref{E:R}.
The following two lemmas \ref{L:RGaprxL1} and \ref{L:RGaprx} below show that $\tlMr{\epsilon}{\delta}{R}{a}(t,x)$
is an approximation of $\lMr{\delta}{G}{a}(t,x)$. The proofs of both Lemmas depend on Lemma \ref{L:GH3F3} below.

\begin{lemma}\label{L:GH3F3}
For all $b\in \R$,
\begin{align}\label{E:GH3F3}
\lim_{z\rightarrow\infty} e^{-z} z^{b+1} \sum_{k=1}^\infty \frac{z^{k-1}}{k! \: k^b} =1.
\end{align}
If $b\ge -1$, then
\begin{align}\label{E:GH3F3CB}
C_b:= \sup_{z\ge 0} e^{-z} z^{b+1} \sum_{k=1}^\infty \frac{z^{k-1}}{k! \: k^b}<+\infty.
\end{align}
\end{lemma}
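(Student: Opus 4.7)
The sum on the left-hand side of \eqref{E:GH3F3} has a clean probabilistic interpretation: if $Y$ denotes a Poisson random variable with mean $z > 0$, then
\begin{equation*}
e^{-z} z^{b+1} \sum_{k=1}^\infty \frac{z^{k-1}}{k!\, k^b}
= \sum_{k=1}^\infty e^{-z}\frac{z^k}{k!}\left(\frac{k}{z}\right)^{-b}
= \E\left[\left(\frac{Y}{z}\right)^{-b} \Indt{Y \geq 1}\right].
\end{equation*}
Both assertions of the lemma will follow by analyzing this expectation through standard concentration facts for $Y$.

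For the limit \eqref{E:GH3F3}, the plan is to use that $\mathrm{Var}(Y/z) = 1/z \to 0$, so $Y/z \to 1$ in probability and $(Y/z)^{-b}\Indt{Y\geq 1} \to 1$ in probability as $z \to \infty$. To promote this to convergence of expectations, I would split $\{Y\geq 1\}$ into the three regions $\{1 \leq Y < z/2\}$, $\{z/2 \leq Y \leq 2z\}$ and $\{Y > 2z\}$. On the middle region, $(Y/z)^{-b}$ is bounded by $2^{|b|}$ and a further split at $\{|Y/z - 1| < \epsilon\}$ (for arbitrary $\epsilon>0$, combined with $P(|Y/z-1|\geq \epsilon)\to 0$) shows by bounded convergence that the middle contribution tends to $1$. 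On the two outer regions, Chernoff's bound for the Poisson gives $P(Y < z/2) + P(Y > 2z) \leq e^{-c z}$ for some $c > 0$; paired with the crude pointwise bound $(Y/z)^{-b} \leq z^{|b|}$ (useful when $b > 0$ and $Y \geq 1$) or, when $b < 0$, with Cauchy--Schwarz using $\E[(Y/z)^{2|b|}] = O(1)$, the tail contributions are of order $z^{|b|} e^{-cz/2} \to 0$.

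For the supremum bound \eqref{E:GH3F3CB} when $b \geq -1$, I would split on the sign of $b$. If $-1 \leq b \leq 0$, write $|b| \in [0,1]$; since $x \mapsto x^{|b|}$ is concave on $[0,\infty)$, Jensen's inequality yields
\begin{equation*}
\E\left[\left(\frac{Y}{z}\right)^{|b|} \Indt{Y \geq 1}\right] \leq \E\left[\left(\frac{Y}{z}\right)^{|b|}\right] \leq \left(\E[Y/z]\right)^{|b|} = 1,
\end{equation*}
uniformly in $z > 0$. If $b > 0$, split $\{Y \geq 1\}$ at $Y = z/2$: on $\{Y \geq z/2\}$ the integrand is $\leq 2^b$, while on $\{1 \leq Y < z/2\}$ the bound $(Y/z)^{-b} \leq z^b$ together with the Chernoff estimate $P(Y < z/2) \leq e^{-cz}$ gives a contribution $\leq z^b e^{-cz}$, bounded uniformly for $z \geq 1$. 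For $z \in [0,1]$ the function $z \mapsto e^{-z} z^{b+1} \sum_{k\geq 1} z^{k-1}/(k!\,k^b)$ is continuous on a compact interval (the series is entire in $z$ and $z^{b+1}$ is continuous at $0$ precisely because $b+1 \geq 0$), hence bounded there automatically; this is the only place where the hypothesis $b \geq -1$ is used.

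The main technical point is the upper tail $\{Y > 2z\}$ in the case $b < 0$: the integrand $(Y/z)^{|b|}$ grows with $Y$, so a pointwise bound is unavailable and one must combine the exponential Chernoff decay with a moment bound via Cauchy--Schwarz. Everything else reduces to routine bookkeeping.
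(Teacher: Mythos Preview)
Your argument is correct and takes a genuinely different route from the paper. You recognize the expression as $\E[(Y/z)^{-b}\Indt{Y\ge 1}]$ for a Poisson variable $Y$ with mean $z$, and then handle both assertions via concentration: Chernoff bounds for the tails $\{Y<z/2\}$ and $\{Y>2z\}$, bounded convergence on the bulk, and Jensen for the case $-1\le b\le 0$. Once the probabilistic rewriting is in place, \eqref{E:GH3F3} is essentially the law of large numbers $Y/z\to 1$ plus a uniform-integrability check, and \eqref{E:GH3F3CB} follows from a one-line Jensen bound or a Chernoff tail estimate combined with continuity on $[0,1]$.

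The paper proceeds analytically instead. It first proves \eqref{E:GH3F3} for $b\in\bbN$ by induction via l'H\^opital's rule, then treats $b\in[1/2,3/2[$ directly by the pointwise estimate $|k^{-b}-z^{-b}|\le b|k-z|\bigl(z^{-1}k^{-1/2}+z^{-1/2}k^{-1}\bigr)$ and Cauchy--Schwarz together with the identity $\sum_{k\ge 0}\frac{z^k}{k!}(k-z)^2=ze^z$ (the Poisson variance in disguise); general $b\in\R$ is then reached by integer shifts via l'H\^opital again. For \eqref{E:GH3F3CB} the paper simply invokes continuity on $\R_+\cup\{+\infty\}$, using the limit just proved together with finiteness at $z=0$, which is exactly where $b\ge -1$ enters.

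Your approach is shorter and avoids the case analysis on $b$; the paper's approach is more hands-on and yields explicit intermediate estimates (the variance identity \eqref{E:Square} reappears in later lemmas). Both ultimately rest on the same Poisson concentration.
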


Note that when $b\in\bbN$, the series in \eqref{E:GH3F3} converges to the {\it generalized hypergeometric function} (see \cite[Chapter
16]{NIST2010}):
\[
\sum_{k=1}^\infty\frac{z^{k-1}}{k!\: k^b}=\lMr{b}{F}{b}\Big((\:\underbrace{1,\dots,1}_{\text{$b+1$}}\:),(\:\underbrace{2,\dots,2}_{\text{$b+1$}}\:);z\Big),\quad\text{for $b\in\bbN$ and $z\in \bbC$.}
\]

\begin{proof}[Proof of Lemma \ref{L:GH3F3}]
Clearly, the series converges on $z\in\bbC$ and it defines an entire function.
We first assume that $b\in \bbN$. We will prove \eqref{E:GH3F3} by induction. Clearly, the case $b=0$ is true.
Suppose that \eqref{E:GH3F3} is true for $b$. Now let us consider the case $b+1$: Applying
l'H\^{o}pital's rule and the induction assumption, we obtain
\begin{equation}\label{E:lHopital}
\begin{aligned}
\lim_{z\rightarrow\infty} e^{-z} z^{b+2} \sum_{k=1}^\infty \frac{z^{k-1}}{k! \: k^{b+1}}&=
\lim_{z\rightarrow\infty}\frac{\sum_{k=1}^\infty \frac{z^{k}}{k! \: k^{b+1}}}{\frac{e^z}{z^{b+1}}}
=
\lim_{z\rightarrow\infty}\frac{\sum_{k=1}^\infty \frac{z^{k-1}}{k! \: k^{b}}}{\frac{e^z(z^{b+1}-(b+1)z^b)}{z^{2(b+1)}}}\\
&=
\lim_{z\rightarrow\infty} e^{-z} z^{b+1} \sum_{k=1}^\infty \frac{z^{k-1}}{k! \: k^b}=1.
\end{aligned}
\end{equation}
This proves Lemma \ref{L:GH3F3} for $b\in\bbN$.

Now assume that $1/2 \leq b<3/2$.
Because the function $f(x)=x^b$ for $x\ge 1$ is either concave or convex,
we have that for all $k\ge 1$ and $z\ge 1$,
\[
\left|z^b - k^b\right| \le b \left(z^{b-1}\vee k^{b-1}\right) |k-z|
\le
b \left(z^{b-1}+ k^{b-1}\right) |k-z|.
\]
Hence, for all $z\ge 1$ and $k\ge 1$,
\begin{align}
\left|\frac{1}{k^b}-\frac{1}{z^b}\right| =
\frac{\left|z^b-k^b\right|}{z^b \:k^b} \le
b |k-z|\left(\frac{1}{z\: k^b}+\frac{1}{z^b k}\right)
 \le
b|k-z|\left(\frac{1}{z \: k^{1/2}}+\frac{1}{z^{1/2} k}\right).
\label{E:DeltaB}
\end{align}
Thus, for $z\ge 1$, by Cauchy-Schwartz inequality and \eqref{E:Square},
\begin{align*}
 \left|\sum_{k=2}^\infty \frac{z^k}{k! k^b} - \frac{1}{z^b}\left(e^z-1-z\right)\right|&=
 \left|\sum_{k=2}^\infty \frac{z^k}{k! }\left(\frac{1}{k^{b}}-\frac{1}{z^{b}}\right)\right|\\
&\le
\frac{b}{ z} \sum_{k=2}^\infty \frac{z^k}{k!}\frac{|k-z|}{\sqrt{k}}+ \frac{b}{\sqrt{z}} \sum_{k=2}^\infty \frac{z^k}{k!}\frac{|k-z|}{ k}\\
& \le
\frac{b}{z} \left(\sum_{k=0}^\infty \frac{z^k}{k!}|k-z|^2\right)^{1/2}\left(\sum_{k=1}^\infty \frac{z^k}{k! \:k}\right)^{1/2} \\
& \quad+
\frac{b}{\sqrt{z}} \left(\sum_{k=0}^\infty \frac{z^k}{k!}|k-z|^2\right)^{1/2}\left(\sum_{k=1}^\infty \frac{z^k}{k! \:k^2}\right)^{1/2} \\
&=
\frac{b}{z^{3/2}}\left(e^{z} z^2\sum_{k=1}^\infty \frac{z^{k-1}}{k! \:k}\right)^{1/2}
+
\frac{b}{z^{3/2}}\left(e^{z} z^3\sum_{k=1}^\infty \frac{z^{k-1}}{k! \:k^2}\right)^{1/2}
\end{align*}
Hence, by the previous proof for the case $b\in\mathbb{N}$, and because $b<3/2$,
\[
\lim_{z\rightarrow \infty} z^{b}e^{-z} \left|\sum_{k=2}^\infty \frac{z^k}{k! k^{b}} - \frac{1}{z^{b}}\left(e^z-1-z\right)\right|
\le
\lim_{z\rightarrow \infty}\frac{2 b}{ z^{3/2-b}} =0.
\]
Therefore,
\[
 \lim_{z\rightarrow\infty} z^{b} e^{-z} \sum_{k=2}^\infty \frac{z^k}{k!\; k^{b}}
= \lim_{z\rightarrow\infty} z^{b} e^{-z} \frac{1}{z^{b}}\left(e^z-1-z\right)=1.
\]

Now assume that $b<1/2$. Let $c\in [1/2,3/2[\;$ and $n\in\bbN$ such that $b+n=c$.
Then apply l'H\^{o}pital's rule $n$ times as in \eqref{E:lHopital},
\[
1=\lim_{z\rightarrow\infty} e^{-z} z^{c+1} \sum_{k=1}^\infty \frac{z^{k-1}}{k! \: k^{c}}
=\lim_{z\rightarrow\infty} e^{-z} z^{c} \sum_{k=1}^\infty \frac{z^{k-1}}{k! \: k^{c-1}}
=\cdots
=\lim_{z\rightarrow\infty} e^{-z} z^{c-n+1} \sum_{k=1}^\infty \frac{z^{k-1}}{k! \: k^{c-n}}.
\]
Similarly, if $b\ge 3/2$, then let $c\in [1/2,3/2[\;$ and $n\in\bbN$ such that $b=c+n$.
Then apply l'H\^{o}pital's rule $n$ times as in \eqref{E:lHopital},
\[
\lim_{z\rightarrow\infty} e^{-z} z^{b+1} \sum_{k=1}^\infty \frac{z^{k-1}}{k! \: k^{b}}
=\lim_{z\rightarrow\infty} e^{-z} z^{b} \sum_{k=1}^\infty \frac{z^{k-1}}{k! \: k^{b-1}}
=\cdots
=\lim_{z\rightarrow\infty} e^{-z} z^{b-n+1} \sum_{k=1}^\infty \frac{z^{k-1}}{k! \: k^{b-n}}=1.
\]
This proves \eqref{E:GH3F3} for all $b\in\R$.

Finally, \eqref{E:GH3F3CB} follows from the fact that the function $f(z)=z^{b+1} e^{-z} \sum_{k=1}^\infty \frac{z^{k-1}}{k!\; k^{b}}$ is continuous over $\R_+\cup\{+\infty\}$ with $f(\infty)=1$ and
$|f(0)|<\infty$ if $b\ge -1$ (actually, $f(0)=0$ if $b>-1$ and $f(0)=1$ if $b=-1$).
This completes the proof of Lemma \ref{L:GH3F3}.
\end{proof}

\begin{lemma}\label{L:RGaprxL1}
There exists a finite constant $C>0$ such that
\[
\int_\R \ud x \: \left|\tlMr{\epsilon}{\delta}{R}{a}(t,x)-\lMr{\delta}{G}{a}(t,x)\right|\le
e^{-t/\epsilon}+C\left(\frac{\epsilon}{t}\right)^{1/2},\quad\text{for all $\epsilon>0$ and $t>0$,}
\]
where the constant $C$ can be chosen as
\begin{align}
 \label{E:RGaprxL1}
C=\frac{1}{a}\left(1+K_{a,1}\Gamma\left(\frac{a}{a+2}\right)\Gamma\left(\frac{a+4}{a+2}\right)\right)
 \left[\sup_{z\ge 0}
e^{-z}z (4  z^2+7 z+1) \sum_{k=1}^\infty \frac{z^{k-1}}{k! \: k^2}\right]^{\frac{1}{2}},
\end{align}
with the constant $K_{a,1}$ defined in \cite[(4.3)]{ChenDalang14FracHeat}.
\end{lemma}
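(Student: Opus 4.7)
The plan is to represent the difference $\tlMr{\epsilon}{\delta}{R}{a}-\lMr{\delta}{G}{a}$ as a Poisson-weighted average of Green-function time increments, estimate each increment via the (assumed) $L^1$-time bound \eqref{E:timediff}, and then collapse the resulting Poisson sum by Cauchy--Schwarz combined with Lemma~\ref{L:GH3F3}.

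The starting identity is $e^{-t/\epsilon}\sum_{n=1}^\infty (t/\epsilon)^n/n!=1-e^{-t/\epsilon}$, which lets me write
\[
\tlMr{\epsilon}{\delta}{R}{a}(t,x)-\lMr{\delta}{G}{a}(t,x)=e^{-t/\epsilon}\sum_{n=1}^\infty\frac{(t/\epsilon)^n}{n!}\bigl[\lMr{\delta}{G}{a}(n\epsilon,x)-\lMr{\delta}{G}{a}(t,x)\bigr]-e^{-t/\epsilon}\lMr{\delta}{G}{a}(t,x).
\]
Taking absolute values, integrating and using $\int_{\R}\lMr{\delta}{G}{a}(t,x)\,\ud x=1$ gives the $e^{-t/\epsilon}$ contribution plus $e^{-t/\epsilon}\sum_{n\ge 1}(t/\epsilon)^n/n!$ times the Green-function time increment at time pair $(n\epsilon,t)$. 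Inserting \eqref{E:timediff} in the form $\int_{\R}|\lMr{\delta}{G}{a}(s,x)-\lMr{\delta}{G}{a}(t,x)|\,\ud x\le c_0|\log(s/t)|$ with $c_0=\tfrac{1}{a}\bigl(1+K_{a,1}\Gamma(a/(a+2))\Gamma((a+4)/(a+2))\bigr)$ and setting $z:=t/\epsilon$ reduces the problem to controlling $e^{-z}\sum_{n\ge 1}(z^n/n!)|\log(n/z)|$ by a multiple of $z^{-1/2}$.

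Since $e^{-z}\sum_{n\ge 1}z^n/n!\le 1$, Cauchy--Schwarz with respect to the Poisson weights yields
\[
e^{-z}\sum_{n=1}^\infty\frac{z^n}{n!}|\log(n/z)|\le\Bigl(e^{-z}\sum_{n=1}^\infty\frac{z^n}{n!}|\log(n/z)|^2\Bigr)^{1/2}.
\]
For the pointwise bound on the logarithm I would use the elementary inequality $|\log u|\le |u-1|/\sqrt{u}$ valid for all $u>0$, so that $|\log(n/z)|^2\le (n-z)^2/(nz)$. Substituting,
\[
e^{-z}\sum_{n=1}^\infty\frac{z^n}{n!}|\log(n/z)|^2\le\frac{1}{z}\,e^{-z}\sum_{n=1}^\infty\frac{z^n(n-z)^2}{n!\,n}.
\]
Expanding $(n-z)^2=n^2-2nz+z^2$ and invoking the standard Poisson identities $e^{-z}\sum z^n/n!=1-e^{-z}$, $e^{-z}\sum nz^n/n!=z$ together with $e^{-z}\sum z^n/(n!\,n)=z\,e^{-z}\sum_{k\ge 1}z^{k-1}/(k!\,k)$ reduces the right-hand side to a linear combination of the hypergeometric series $g_b(z):=e^{-z}\sum_{k\ge 1}z^{k-1}/(k!\,k^b)$ with $b\in\{1,2\}$. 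Using the first-order differential identity $\frac{d}{dz}[z\,g_2(z)]=-z\,g_2(z)+g_1(z)$ to eliminate $g_1$ in favour of $g_2$, everything bundles into $\frac{1}{z}\cdot e^{-z}z(4z^2+7z+1)\sum_{k\ge 1}z^{k-1}/(k!\,k^2)$, whose supremum over $z\ge 0$ is finite by Lemma~\ref{L:GH3F3} with $b=2$.

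The main obstacle is this last algebraic reduction in Step 5: matching the combination of Poisson moments and the series $g_1,g_2$ exactly to the polynomial $4z^2+7z+1$ multiplied by the single series $\sum z^{k-1}/(k!\,k^2)$. The reduction is mechanical but error-prone, requiring careful shifts of summation indices and the recursion between $g_1$ and $g_2$. Once this identity is in place, Cauchy--Schwarz delivers the $z^{-1/2}=(\epsilon/t)^{1/2}$ decay with the explicit constant $C$ displayed in \eqref{E:RGaprxL1}, while the separate $e^{-t/\epsilon}$ term survives from Step~1.
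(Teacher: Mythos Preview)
Your Steps 1--3 coincide with the paper's argument, and your Cauchy--Schwarz bound
\[
e^{-z}\sum_{n\ge 1}\frac{z^n}{n!}\,|\log(n/z)|\le z^{-1/2}\Bigl(e^{-z}\sum_{n\ge 1}\frac{z^n(n-z)^2}{n!\,n}\Bigr)^{1/2}
\]
is valid. The gap is in Step~5: the claimed reduction to $e^{-z}z(4z^2+7z+1)\sum_{k\ge 1}z^{k-1}/(k!\,k^2)$ is \emph{false}. Expanding your sum gives $-z+2ze^{-z}+z^3 g_1(z)$ with $g_1$ only (no $g_2$), and numerically the two expressions disagree (at $z\to\infty$ yours tends to $1$ while the target tends to $4$; at $z=1$ they are roughly $0.22$ versus $5.06$). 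No differential identity between $g_1$ and $g_2$ can turn an expression with limit $1$ into one with limit $4$. Moreover, the boundedness of your own quantity is not immediate from Lemma~\ref{L:GH3F3}: the leading terms $-z$ and $z^3 g_1(z)\sim z$ cancel, which requires the second-order expansion $z^2 g_1(z)=1+O(1/z)$ rather than just $z^2 g_1(z)\to 1$.

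The reason you cannot land on the stated constant is that the paper uses a \emph{different} log bound and a \emph{different} Cauchy--Schwarz splitting. The paper bounds $|\log(k/z)|\le |k-z|(1/k+1/z)=|k^2-z^2|/(kz)$ (from concavity of $\log$), which is weaker than your $|\log u|\le |u-1|/\sqrt{u}$ by roughly a factor $2$, and then applies Cauchy--Schwarz to $\sum \frac{z^{k-1}}{k!\,k}|k^2-z^2|$ with the splitting $\frac{z^{k-1}}{k!\,k^2}$ versus $\frac{z^{k-1}}{k!}(k^2-z^2)^2$. The second factor is evaluated \emph{in closed form} via the falling-factorial identities $k^2=P_k^2+P_k^1$, $k^4=P_k^4+6P_k^3+7P_k^2+P_k^1$, yielding $\sum_{k\ge 1}\frac{z^{k-1}}{k!}(k^2-z^2)^2=e^z(4z^2+7z+1)-z^3$; this is exactly where the polynomial $4z^2+7z+1$ originates. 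Your sharper log bound would in fact give a smaller constant, but not one expressible in the form \eqref{E:RGaprxL1}, so if the goal is the explicit constant you must follow the paper's coarser bound and its particular Cauchy--Schwarz factorisation.
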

\begin{proof}
From \cite[(4.3)]{ChenDalang14FracHeat},
\begin{align}\notag
 \left|\frac{\partial }{\partial t} \lMr{\delta}{G}{a}(t,x)\right| =& \left|-\frac{1}{at}\left(\lMr{\delta}{G}{a}(t,x)+ x \frac{\partial
\lMr{\delta}{G}{a}(t,x)}{\partial x}\right)\right|\\
\notag
&
\le
\frac{1}{at}\left(\lMr{\delta}{G}{a}(t,x)+ \left|x \frac{\partial
\lMr{\delta}{G}{a}(t,x)}{\partial x}\right|\right)\\
\label{E:DtG}
&\le
\frac{1}{at}\left[\lMr{\delta}{G}{a}(t,x)+t^{-\frac{2}{a}}\frac{K_{a,1}|x|}{1+|t^{-1/a} x|^{2+a}}\right].
\end{align}
Thus, for $0<t\le t'$,
\begin{equation}\label{E:timediff}
\begin{aligned}
\int_\R \ud x \left|\lMr{\delta}{G}{a}(t',x)-\lMr{\delta}{G}{a}(t,x)\right| &
\le \int_\R\ud x \int_{t}^{t'}\ud s \left|\frac{\partial }{\partial t} \lMr{\delta}{G}{a}(s,x)\right|\\&
\le \int_t^{t'}\ud s \: \frac{1}{a s} \left(1+\int_\R \ud y \frac{K_{a,1}|y|}{1+|y|^{2+a}}\right)
\le C' \log\left(\frac{t'}{t}\right),
\end{aligned}
\end{equation}
where
\begin{equation}\label{E:constantc'}
C':=\frac{1}{a}\left(1+\int_\R\ud y \frac{K_{a,1}|y|}{1+|y|^{2+a}}\right)=
\frac{1}{a}\left(1+K_{a,1}\Gamma\left(\frac{a}{a+2}\right)\Gamma\left(\frac{a+4}{a+2}\right)\right),
\end{equation}
and the integral in \eqref{E:constantc'} is evaluated by Lemma \ref{L:Beta}.
Notice that
\[
\left|\tlMr{\epsilon}{\delta}{R}{a}(t,x)-\lMr{\delta}{G}{a}(t,x)\right|
\le e^{-t/\epsilon} \lMr{\delta}{G}{a}(t,x)
+ e^{-t/\epsilon} \sum_{k=1}^\infty \left(\frac{t}{\epsilon}\right)^k \frac{1}{k!}
\left|\lMr{\delta}{G}{a}(t,x)-\lMr{\delta}{G}{a}(k\epsilon,x)\right|.
\]
By the above inequality, we have that
\[
\int_\R\ud x\left|\tlMr{\epsilon}{\delta}{R}{a}(t,x)-\lMr{\delta}{G}{a}(t,x)\right|
\le e^{-t/\epsilon} +C' e^{-t/\epsilon}\sum_{k=1}^\infty \left(\frac{t}{\epsilon}\right)^k \frac{1}{k!}
\left|\log (k\epsilon/t)\right|.
\]
Denote the summation over $k$ in above upper bound by $I(t/\epsilon)$.
Because the function $x\mapsto \log(x)$ is concave, $|\log(t'/t)|\le |t'-t|\left(\frac{1}{t'}\vee \frac{1}{t}\right)\le |t'-t|\left(\frac{1}{t'}+ \frac{1}{t}\right)$. So, by letting $z=t/\epsilon$,
\[
I(z) \le \sum_{k=1}^\infty \frac{z^k}{k!}\left|k-z\right|\left(\frac{1}{k}+\frac{1}{z}\right)
=\sum_{k=1}^\infty \frac{z^{k-1}}{k! \: k}\left|k^2-z^2\right|.
\]
Then by Cauchy-Schwartz inequality,
\[
I(z)\le
\left(\sum_{k=1}^\infty \frac{z^{k-1}}{k! \: k^2}\right)^{1/2}
\left(\sum_{k=1}^\infty \frac{z^{k-1}}{k! }\left[k^2-z^2\right]^2\right)^{1/2}.
\]
Notice that
\begin{align}\label{E:ExpZ}
\sum_{k=1}^\infty \frac{z^{k-1}}{k! }\left[k^2-z^2\right]^2=
e^z( 4  z^2+7 z+1)-z^3 \le e^z( 4  z^2+7 z+1).
\end{align}
To prove the equality in \eqref{E:ExpZ}, one can write $k^2= P_k^2+P_k^1$ and
$k^4=P_k^4+6P_k^3+7P_k^2+P_k^1$, where $P_k^n:= k (k-1)\cdots(k-n+1)$.
Hence, $(k^2-z^2)^2=P_k^4+6P_k^3+(7-2 z^2)P_k^2+(1-2z^2)P_k^1+z^4$.
Then use the fact that for $n\ge 1$,
$\sum_{k=1}^\infty \frac{z^{k}}{k!} P_k^n=e^z z^n$.

Therefore,
\[
I(z)e^{-z}\le
\left(e^{-z}z (4  z^2+7 z+1) \sum_{k=1}^\infty \frac{z^{k-1}}{k! \: k^2}\right)^{1/2} \: \frac{1}{\sqrt{z}}.
\]
By Lemma \ref{L:GH3F3}, the function $f(z)=e^{-z}z (4  z^2+7 z+1) \sum_{k=1}^\infty \frac{z^{k-1}}{k! \: k^2}$ is continuous over $\R_+\cup\{+\infty\}$ with $f(0)=0$ and $f(\infty)=4$. Thus,
$\sup_{z\ge 0}f(z)<+\infty$.
This completes the proof of Lemma \ref{L:RGaprxL1}.
\end{proof}

\begin{lemma}\label{L:RGaprx}
We have that
\begin{gather}
\label{E:RGaprx0}
\lim_{\epsilon\rightarrow 0}\tlMr{\epsilon}{\delta}{R}{a}(t,x) = \lMr{\delta}{G}{a}(t,0) \Indt{x= 0},
\\
\label{E:RGaprx1}
\lim_{\epsilon\rightarrow 0}\int_0^t\ud s \int_\R\ud x
\left[\tlMr{\epsilon}{\delta}{R}{a}(s,x) - \lMr{\delta}{G}{a}(s,x)\right]^2=0,
\quad\text{for all $t>0$,}
\end{gather}
and there is a nonnegative constant $C_{a,\delta}<+\infty$ such that
\begin{align}\label{E:RGaprx2}
\int_\R \ud x \tlMr{\epsilon}{\delta}{R}{a}^2(t,x)\le  C_{a,\delta}  \: t^{-\frac{1}{a}},\quad\text{for all $t>0$.}
\end{align}
\end{lemma}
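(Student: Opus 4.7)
The three assertions describe how the Poisson-mixed kernel $\tlMr{\epsilon}{\delta}{R}{a}(t,x)$ approximates $\lMr{\delta}{G}{a}(t,x)$ as $\epsilon\to 0$. The plan is to tackle them in the order (3), (1), (2), since the uniform $L^2$ bound in (3) supplies the domination needed both for the pointwise limit in (1) and for the spacetime $L^2$ convergence in (2).

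For (3), set $z := t/\epsilon$ and let $p_n := e^{-z} z^n/n!$ be the Poisson weights. Since $\sum_{n\ge 1} p_n \le 1$, Cauchy--Schwarz applied to the representation $\tlMr{\epsilon}{\delta}{R}{a}(t,x) = \sum_{n\ge 1} p_n\,\lMr{\delta}{G}{a}(n\epsilon,x)$ gives $\tlMr{\epsilon}{\delta}{R}{a}(t,x)^2 \le \sum_{n\ge 1} p_n\,\lMr{\delta}{G}{a}(n\epsilon,x)^2$. Integrating in $x$ and using the scaling identity $\int_\R \lMr{\delta}{G}{a}(s,y)^2\,\ud y = c_{a,\delta}\,s^{-1/a}$ with $c_{a,\delta} := \int_\R \lMr{\delta}{G}{a}(1,y)^2\,\ud y < \infty$, one obtains
\[
\int_\R \tlMr{\epsilon}{\delta}{R}{a}(t,x)^2\,\ud x \le c_{a,\delta}\,\epsilon^{-1/a}\, e^{-z}\sum_{n\ge 1} \frac{z^n}{n!\,n^{1/a}}.
\]
Invoking Lemma \ref{L:GH3F3} with $b = 1/a$ to bound the Poisson series by $C_{1/a}\,z^{-1/a}$ and substituting $z = t/\epsilon$ then gives (3) with $C_{a,\delta} := c_{a,\delta}\,C_{1/a}$.

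For (1), I would interpret $\tlMr{\epsilon}{\delta}{R}{a}(t,x) = \E\bigl[\lMr{\delta}{G}{a}(N_\epsilon\,\epsilon, x)\,\one\{N_\epsilon \ge 1\}\bigr]$ for $N_\epsilon\sim\mathrm{Poisson}(z)$. The weak law of large numbers gives $N_\epsilon\epsilon \to t$ in probability, and since $s \mapsto \lMr{\delta}{G}{a}(s,x)$ is continuous on $(0,\infty)$, the integrand converges in probability to $\lMr{\delta}{G}{a}(t,x)$. Uniform integrability follows from the scaling bound $\lMr{\delta}{G}{a}(s,x) \le \Lambda\,s^{-1/a}$ combined with Lemma \ref{L:GH3F3} at exponent $b = 2/a$, which yields a uniform-in-$\epsilon$ bound on $\E[\lMr{\delta}{G}{a}(N_\epsilon\epsilon,x)^2\one\{N_\epsilon\ge 1\}]$, and then convergence of the means gives the pointwise limit.

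Finally, for (2), combine the pointwise convergence from (1) with the uniform bound
\[
\int_\R \bigl(\tlMr{\epsilon}{\delta}{R}{a} - \lMr{\delta}{G}{a}\bigr)^2(s,x)\,\ud x \le 2(C_{a,\delta} + c_{a,\delta})\,s^{-1/a},
\]
which is integrable over $s \in [0,t]$ because $a > 1$ implies $1/a < 1$. The dominated convergence theorem on $[0,t]\times\R$ then delivers (2). I expect the principal obstacle to lie in step (3): extracting the sharp $t^{-1/a}$ rate hinges on the non-obvious asymptotic bound from Lemma \ref{L:GH3F3} for the Poisson-weighted series $e^{-z}\sum z^n/(n!\, n^{1/a})$, without which one would obtain only crude estimates too weak to sustain the Walsh-isometry arguments used elsewhere in the paper.
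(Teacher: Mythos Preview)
Your route differs from the paper's and is largely sound, but the final step has a real gap.

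For \eqref{E:RGaprx2} your Jensen/Cauchy--Schwarz reduction to a single Poisson-weighted sum is cleaner than the paper's argument, which expands $\tlMr{\epsilon}{\delta}{R}{a}^2$ as a double sum, applies the semigroup identity $\int_\R \lMr{\delta}{G}{a}(n\epsilon,\cdot)\lMr{\delta}{G}{a}(m\epsilon,\cdot)=\lMr{\delta}{G}{a}((n+m)\epsilon,0)$, and only then invokes Lemma~\ref{L:GH3F3}. Both land on the same bound; yours is shorter. For \eqref{E:RGaprx0} your probabilistic argument via the Poisson law of large numbers plus the $L^2$ (hence UI) bound from Lemma~\ref{L:GH3F3} with $b=2/a$ is correct, and it produces the limit $\lMr{\delta}{G}{a}(t,x)$ for \emph{every} $x$ --- which is in fact what you need below for \eqref{E:RGaprx1}, even though it does not match the displayed right-hand side of \eqref{E:RGaprx0}.

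The gap is in your treatment of \eqref{E:RGaprx1}. Dominated convergence on $[0,t]\times\R$ requires a pointwise majorant $g(s,x)\in L^1([0,t]\times\R)$ with $(\tlMr{\epsilon}{\delta}{R}{a}-\lMr{\delta}{G}{a})^2(s,x)\le g(s,x)$ uniformly in $\epsilon$; your bound $2(C_{a,\delta}+c_{a,\delta})s^{-1/a}$ controls only the \emph{spatial integral} $\int_\R(\cdot)^2\,\ud x$, not the integrand. Pointwise convergence together with a uniform bound on $x$-marginals does not force the double integral to zero (a mass-escaping-to-infinity example shows this). The paper avoids the issue by brute computation: it writes $\int_0^t\int_\R(\tlMr{\epsilon}{\delta}{R}{a}-\lMr{\delta}{G}{a})^2=I_1-2I_2+I_3$ and evaluates each term exactly via the semigroup property, l'H\^opital, and Lemma~\ref{L:GH3F3}, showing all three equal $\frac{a\,\lMr{\delta}{G}{a}(1,0)}{2^{1/a}(a-1)}t^{1-1/a}$. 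To rescue your approach you must first establish $\int_\R(\tlMr{\epsilon}{\delta}{R}{a}-\lMr{\delta}{G}{a})^2(s,x)\,\ud x\to 0$ for each fixed $s$; one way is to note that the same Poisson estimate as in (3) gives the uniform sup bound $\tlMr{\epsilon}{\delta}{R}{a}(s,x)\le C s^{-1/a}$, combine this with Scheff\'e's lemma (both kernels are nonnegative and $\int_\R\tlMr{\epsilon}{\delta}{R}{a}(s,\cdot)=1-e^{-s/\epsilon}\to 1$) to get $L^1(\R)$ convergence, and then bound $\int(\tlMr{\epsilon}{\delta}{R}{a}-\lMr{\delta}{G}{a})^2\le \Norm{\tlMr{\epsilon}{\delta}{R}{a}+\lMr{\delta}{G}{a}}_\infty\Norm{\tlMr{\epsilon}{\delta}{R}{a}-\lMr{\delta}{G}{a}}_1$. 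Only after that does DCT in $s$, with your dominator $Cs^{-1/a}$, legitimately finish the proof.
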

\begin{proof}
Fix $t>0$. Denote $A:= \lMr{\delta}{G}{a}(1,0)$.
Clearly,
\[
\int_0^t\ud s \int_\R\ud x
\left[\tlMr{\epsilon}{\delta}{R}{a}(s,x) - \lMr{\delta}{G}{a}(s,x)\right]^2
= I_1(t,\epsilon) -2 I_2(t,\epsilon) + I_3(t),
\]
where
\begin{align}
I_1(t,\epsilon) &= \iint_{[0,t]\times\R} \ud s\ud x \tlMr{\epsilon}{\delta}{R}{a}^2(s,x), \\
I_2(t,\epsilon) &= \iint_{[0,t]\times\R} \ud s\ud x \tlMr{\epsilon}{\delta}{R}{a}(s,x)\lMr{\delta}{G}{a}(s,x), \\
I_3(t) &= \iint_{[0,t]\times\R} \ud s\ud x \lMr{\delta}{G}{a}^2(s,x).
\end{align}

By the semigroup property and scaling property \cite[(4.1)]{ChenDalang14FracHeat}, we have that
\begin{align}\label{E_:I3}
I_3(t) =\int_0^t \ud s \; \lMr{\delta}{G}{a}(2s,0)=
A \int_0^t (2s)^{-\frac{1}{a}}\ud s = \frac{a A}{2^{1/a}(a-1)}\: t^{1-\frac{1}{a}}.
\end{align}

{\vspace{0.7em}\noindent \bf Step 1.} We first calculate $I_1$. Use the semigroup property and scaling property \cite[(4.1)]{ChenDalang14FracHeat}:
\begin{align*}
I_1(t,\epsilon) &=\int_0^t  \ud s \: e^{-2s/\epsilon}
\sum_{n=1}^\infty
\sum_{m=1}^\infty\left(\frac{s}{\epsilon}\right)^{n+m}\frac{1}{n! m!} \lMr{\delta}{G}{a}((n+m)\epsilon,0)\\
&=
A \int_0^t  \ud s \: e^{-2s/\epsilon}
\sum_{n=1}^\infty
\sum_{m=1}^\infty\left(\frac{s}{\epsilon}\right)^{n+m}\frac{1}{n! m! (n+m)^{1/a} \epsilon^{1/a}}.
\end{align*}
Then by change of variables $u=s/\epsilon$ and let $z=t/\epsilon$, we have that
\[
I_1(t,\epsilon) = A t^{1-\frac{1}{a}}
\frac{1}{z^{1-\frac{1}{a}}}\int_0^z \ud u\:  e^{-2u}\sum_{n,m=1}^\infty\frac{u^{n+m}}{n!m!(n+m)^{1/a}}.
\]
By l'H\^{o}pital's rule,
\[
I_1(t):=\lim_{\epsilon\rightarrow 0}I_1(t,\epsilon)
=  \frac{a A}{a-1} t^{1-\frac{1}{a}}  \lim_{z\rightarrow\infty} z^{1/a} e^{-2z}
\sum_{n,m=1}^\infty\frac{z^{n+m}}{n!m!(n+m)^{1/a}}.
\]
Because $\sum_{n=1}^{k-1}\frac{1}{n!(k-n)!} = \frac{1}{k!}(2^k-2)$,
\[
\sum_{n,m=1}^\infty\frac{z^{n+m}}{n!m!(n+m)^{1/a}}
=\sum_{k=2}^\infty \frac{z^k}{k^{1/a}}\sum_{n=1}^{k-1} \frac{1}{n!(k-n)!}
=\sum_{k=2}^\infty \frac{z^k}{k! k^{1/a} }(2^k-2).
\]
Hence,
\begin{align}\label{E_:I1}
I_1(t)=\frac{a A}{a-1} t^{1-\frac{1}{a}}  \lim_{z\rightarrow\infty}
z^{1/a} e^{-2z} \sum_{k=2}^\infty \frac{z^k}{k! k^{1/a} }(2^k-2)
=\frac{a A}{2^{1/a}(a-1)}\: t^{1-\frac{1}{a}},
\end{align}
where the last equality is due to Lemma \ref{L:GH3F3} with $b=1/a\in [1/2,1]$.

{\vspace{0.7em}\noindent \bf Step 2.} In this step, we calculate $I_2(t):= \lim_{\epsilon\rightarrow\infty} I_2(t,\epsilon)$. Similarly to the Step 1,  use the semigroup property and scaling property
\cite[(4.1)]{ChenDalang14FracHeat}, and then change the variables $u=s/\epsilon$ and $z=t/\epsilon$,
\begin{align*}
I_2(t,\epsilon) &=\int_0^t  \ud s\: e^{-s/\epsilon}\sum_{n=1}^\infty
\left(\frac{s}{\epsilon}\right)^n \frac{1}{n!}\lMr{\delta}{G}{a}(s+n\epsilon,0)= A t^{1-\frac{1}{a}} \frac{1}{z^{1-\frac{1}{a}}} \int_0^z\ud u\; e^{-u} \sum_{n=1}^\infty
\frac{u^n}{n! (u+n)^{1/a}}.
\end{align*}
By l'H\^{o}pital's rule,
\[
I_2(t)=  \frac{a A}{a-1} t^{1-\frac{1}{a}}  \lim_{z\rightarrow\infty} z^{1/a} e^{-z}
\sum_{n=1}^\infty\frac{z^{n}}{n!(z+n)^{1/a}}.
\]
Apply the inequality \eqref{E:DeltaB} below with $b=1/a\in [1/2,1]$,
\[
\left|\frac{1}{(z+n)^{1/a}}-\frac{1}{(2z)^{1/a}}\right|
\le \frac{1}{a} |n-z| \left(\frac{1}{2z \sqrt{n+z}}+\frac{1}{(n+z)\sqrt{2z}}\right)
\le \frac{1+\sqrt{2}}{2a}\:\frac{|n-z|}{z^{3/2}},
\]
for $z\ge 1$ and $n\ge 1$.
Notice that (see the proof of \eqref{E:ExpZ}),
\begin{align}\label{E:Square}
 \sum_{n=0}^\infty \frac{z^n}{n!} |n-z|^2= e^z z.
\end{align}
By Cauchy-Schwartz inequality and \eqref{E:Square}, for $z\ge 1$,
\begin{align*}
 \left|\sum_{n=1}^\infty \frac{z^n}{n! (z+n)^{1/a}} - \frac{1}{(2z)^{1/a}}(e^z-1)\right|
& =
\left|\sum_{n=1}^\infty \frac{z^n}{n!}\left(\frac{1}{(z+n)^{1/a}} - \frac{1}{(2z)^{1/a}}\right)\right|\\
&\le
\frac{1+\sqrt{2}}{2a z^{3/2}}\sum_{n=1}^\infty \frac{z^n}{n!} |n-z|\\
&\le
\frac{1+\sqrt{2}}{2a z^{3/2}}\left(\sum_{n=0}^\infty \frac{z^n}{n!} |n-z|^2\right)^{1/2}\left(\sum_{n=0}^\infty \frac{z^n}{n!}\right)^{1/2}\\
&=
\frac{1+\sqrt{2}}{2a z}e^z.
\end{align*}
Therefore,
\begin{align}
 \label{E_:I2}
I_2(t)=  \frac{a A}{a-1} t^{1-\frac{1}{a}}  \lim_{z\rightarrow\infty} z^{1/a} e^{-z}
 \frac{1}{(2z)^{1/a}}(e^z-1) =
  \frac{a A}{2^{1/a}(a-1)}\: t^{1-\frac{1}{a}}.
\end{align}
Finally, \eqref{E:RGaprx1}
is proved by combining \eqref{E_:I3}, \eqref{E_:I1} and \eqref{E_:I2}.

{\vspace{0.7em}\noindent \bf Step 3.} Now we prove \eqref{E:RGaprx0}.
Clearly, if $x\ne 0$, then $\lim_{\epsilon\rightarrow 0}\tlMr{\epsilon}{\delta}{R}{a}(t,x) =0$. Otherwise, by Lemma \ref{L:GH3F3} with $b=1/a$,
\[
\lim_{\epsilon\rightarrow 0}\tlMr{\epsilon}{\delta}{R}{a}(t,0) =
\frac{A}{t^{1/a}} \lim_{z\rightarrow\infty} z^{1/a}e^{-z}\sum_{k=1}^\infty\frac{z^k}{k!k^{1/a}}=\lMr{\delta}{G}{a}(t,0).
\]

{\vspace{0.7em}\noindent \bf Step 4.} As for \eqref{E:RGaprx2}, denote $I(t;\epsilon)=\int_\R \ud x \tlMr{\epsilon}{\delta}{R}{a}^2(t,x)$.
Following  the arguments in Step 1, 
\begin{align}\label{E_:sup}
I(t,\epsilon)\le \frac{A}{(2t)^{1/a}} \sup_{z\in\R_+}
z^{1/a} e^{-z} \sum_{k=2}^\infty \frac{z^k}{k! k^{1/a} }.
\end{align}
Clearly, the function $f(z)=\sum_{k=2}^\infty \frac{z^k}{k! k^{1/a}}$ is an entire function over $\bbC$. By Lemma \ref{L:GH3F3} and
$\lim_{z\rightarrow 0}z^{1/a}e^{-z}f(z)=0$, we know that the supremum in \eqref{E_:sup}, which depends only on $a$, is finite.
This completes the proof of Lemma \ref{L:RGaprx}
\end{proof}

\begin{lemma}\label{L:MInc}
If $\mu(\ud x) =f(x)\ud x$ with $f\in L^\infty(\R)$, then for all $0<t\le T$ and $x$, $y\in\R$,
\[
\Norm{u(t,x)-u(t,y)}_2^2 \le C t^{-1/a}|x-y|+A_T C_1|x-y|^{a-1} ,
\]
where $K_{a,1}$ is defined in \cite[(4.3)]{ChenDalang14FracHeat}, $C_1:=C_1(a,\delta)$ is defined in \cite[Proposition 4.4]{ChenDalang14FracHeat}, and
\begin{align*}
C&:=8  K_{a,1}\: \Gamma\left(\frac{a+1}{a+2}\right)\Gamma\left(\frac{a+3}{a+2}\right) \sup_{x\in\R}[f(x)]^2,\quad A_T :=\sup_{s\in [0,T]}\sup_{x\in\R} \Norm{\rho(u(s,x))}_2^2.
\end{align*}
\end{lemma}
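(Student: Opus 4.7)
The plan is to decompose the difference via the mild formulation \eqref{E:WalshSI} as $u(t,x)-u(t,y) = \Delta J_0 + \Delta I$, where $\Delta J_0 := J_0(t,x)-J_0(t,y)$ and $\Delta I := I(t,x)-I(t,y)$. Since $J_0$ is deterministic and the Walsh integral $\Delta I$ has mean zero, the cross term vanishes and
\[
\Norm{u(t,x)-u(t,y)}_2^2 = |\Delta J_0|^2 + \Norm{\Delta I}_2^2.
\]
Thus it suffices to match each term to one summand in the target bound.

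For the stochastic part, Walsh's isometry gives
\[
\Norm{\Delta I}_2^2 = \int_0^t\ud s\int_\R\ud z\;\bigl[\lMr{\delta}{G}{a}(t-s,x-z)-\lMr{\delta}{G}{a}(t-s,y-z)\bigr]^2\Norm{\rho(u(s,z))}_2^2,
\]
so pulling out $A_T = \sup_{s\in[0,T],\,z\in\R}\Norm{\rho(u(s,z))}_2^2$ and applying the space-time squared increment estimate of \cite[Proposition 4.4]{ChenDalang14FracHeat} to the remaining Green-function integral yields $A_T C_1 |x-y|^{a-1}$ directly.

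For the deterministic part, I plan to combine two bounds on $|\Delta J_0|$. The trivial one is $|\Delta J_0| \le 2\sup_{x\in\R}|f(x)|$, since $|J_0(t,\cdot)| \le \sup |f|$. For the finer one, set $h=y-x$ and use the fundamental theorem of calculus in the spatial variable together with Fubini:
\[
|\Delta J_0| \le \sup_{x\in\R}|f(x)|\int_\R\ud u\,\bigl|\lMr{\delta}{G}{a}(t,u)-\lMr{\delta}{G}{a}(t,u-h)\bigr| \le \sup_{x\in\R}|f(x)|\,|h|\int_\R |\partial_v \lMr{\delta}{G}{a}(t,v)|\,\ud v.
\]
The gradient estimate from \cite[(4.3)]{ChenDalang14FracHeat}, the substitution $w=t^{-1/a}v$, and Lemma \ref{L:Beta} evaluating $\int_\R(1+|w|^{a+2})^{-1}\,\ud w = 2\Gamma(\tfrac{a+1}{a+2})\Gamma(\tfrac{a+3}{a+2})$ give $\int_\R|\partial_v \lMr{\delta}{G}{a}(t,v)|\,\ud v \le 2K_{a,1}\Gamma(\tfrac{a+1}{a+2})\Gamma(\tfrac{a+3}{a+2})\,t^{-1/a}$. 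Writing $|\Delta J_0|^2 = |\Delta J_0|\cdot|\Delta J_0|$ and using the trivial bound on one factor and the refined one on the other produces
\[
|\Delta J_0|^2 \le 4 K_{a,1}\Gamma(\tfrac{a+1}{a+2})\Gamma(\tfrac{a+3}{a+2})\sup_{x\in\R}[f(x)]^2\,t^{-1/a}|x-y| \le C\, t^{-1/a}|x-y|.
\]

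The only subtle point is this last interpolation: squaring the refined bound alone would yield $t^{-2/a}|h|^2$, which blows up too fast as $t\downarrow 0$ to match the claimed scaling; combining it multiplicatively with the uniform $L^\infty$ estimate for $J_0$ trades one power of $|h|$ for the improvement from $t^{-2/a}$ to $t^{-1/a}$, which is precisely the form stated. Once this observation is in place, both pieces of the lemma are routine.
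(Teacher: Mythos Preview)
Your proof is correct and follows essentially the same approach as the paper: both decompose via the mild formulation, use It\^o's isometry together with \cite[Proposition 4.4]{ChenDalang14FracHeat} for the stochastic term, and handle the deterministic term by the same interpolation $|\Delta J_0|^2 \le (2\sup|f|)\cdot|\Delta J_0|$ combined with an $L^1$ bound on $\lMr{\delta}{G}{a}(t,\cdot)-\lMr{\delta}{G}{a}(t,\cdot-h)$. The only cosmetic difference is that you obtain $\int_\R|\lMr{\delta}{G}{a}(t,u)-\lMr{\delta}{G}{a}(t,u-h)|\,\ud u$ by Fubini and $\int_\R|\partial_v\lMr{\delta}{G}{a}(t,v)|\,\ud v$ directly, whereas the paper first bounds the pointwise increment by $K_{a,1}t^{-2/a}|x-y|/\bigl(1+(|t^{-1/a}x|\wedge|t^{-1/a}y|)^{2+a}\bigr)$ and then splits the minimum into a sum; your route is slightly cleaner and even saves a factor of $2$, but otherwise the arguments are the same.
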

\begin{proof}
By \Itos isometry,
\begin{multline*}
 \Norm{u(t,x)-u(t,y)}_2^2= \left[J_0(t,x)-J_0(t,y)\right]^2+
\int_0^t\ud s\int_\R \ud z \Norm{\rho(u(s,z))}_2^2 \\
\times \left[\lMr{\delta}{G}{a}(t-s,x-z)-\lMr{\delta}{G}{a}(t-s,y-z)\right]^2.
\end{multline*}
Denote $C_f:=\sup_{x\in\R} |f(x)|$ and fix $0<t<T$. Then
\begin{align*}
\Norm{u(t,x)-u(t,y)}_2^2\le & 2\: C_f^2  \int_\R\ud z \left|\lMr{\delta}{G}{a}(t,x-z)-\lMr{\delta}{G}{a}(t,y-z)\right| \\
& +A_T \int_0^t\ud s \int_\R\ud z \left(\lMr{\delta}{G}{a}(s,x-z)-\lMr{\delta}{G}{a}(s,y-z)\right)^2.
\end{align*}
By \cite[Proposition 4.4]{ChenDalang14FracHeat}, for some constant $C_1:=C_1(a,\delta)$, the second part of the above upper bound is bounded by $A_T C_1 |x-y|^{a-1}$. As
for the first part,
notice that by \cite[(4.3)]{ChenDalang14FracHeat}, for all $t> 0$ and $x$, $y\in\R$,
\begin{align*}
\left|\lMr{\delta}{G}{a}(t,x)-\lMr{\delta}{G}{a}(t,y)\right|
& = \left|\int_x^y \frac{\partial }{\partial x}\lMr{\delta}{G}{a}(t,z) \ud z\right|\le \int_x^y \left|\frac{\partial }{\partial x}\lMr{\delta}{G}{a}(t,z)\right| \ud z\\
&\le K_{a,1} t^{-2/a} \int_x^y  \frac{\ud z}{1+|t^{-1/a}z|^{2+a}}\\
&\le K_{a,1} t^{-2/a} \frac{|x-y|}{1+\left(|t^{-1/a}x|\wedge|t^{-1/a}y|\right)^{2+a}}.
\end{align*}
Thus,
\begin{align*}
 \int_\R\ud z &\left|\lMr{\delta}{G}{a}(t,x-z)-\lMr{\delta}{G}{a}(t,y-z)\right|
 \le K_{a,1} t^{-2/a}  \int_\R\frac{|x-y|\:\ud z}{1+\left(|t^{-1/a}(x-z)|\wedge |t^{-1/a}(y-z)|\right)^{2+a}}\\
& \le K_{a,1} t^{-2/a} |x-y| \int_\R\ud z \left[\frac{1}{1+|t^{-1/a}(x-z)|^{2+a}}+\frac{1}{1+|t^{-1/a}(y-z)|^{2+a}}\right]\\
& = 2 K_{a,1} t^{-2/a} |x-y| \int_\R \frac{\ud z}{1+|t^{-1/a} z|^{2+a}}=
 2 K_{a,1} t^{-1/a} |x-y| \int_\R \frac{\ud z}{1+|z|^{2+a}} .
\end{align*}
Hence, by letting
\[
C_a:=\int_\R \frac{\ud z}{1+|z|^{2+a}} = 2\Gamma\left(\frac{a+1}{a+2}\right)\Gamma\left(\frac{a+3}{a+2}\right),
\]
where the integral is evaluated by Lemma \ref{L:Beta}, we have that
\begin{align}
\label{E:GL1}
  \int_\R\ud z \left|\lMr{\delta}{G}{a}(t,x-z)-\lMr{\delta}{G}{a}(t,y-z)\right| \le 2 K_{a,1} C_a t^{-1/a} |x-y|,\quad\text{for all $x$, $y\in\R$.}
\end{align}
This completes the proof of Lemma \ref{L:MInc}.
\end{proof}
\begin{lemma}\label{L:Beta}
For $a>0$ and $b\in \:]-1,a+1[\:$, $\int_0^\infty \ud y \frac{y^b}{1+y^{2+a}} =\frac{1}{b+1}\Gamma\left(\frac{a-b+1}{a+2}\right)\Gamma\left(\frac{a+b+3}{a+2}\right)$.
\end{lemma}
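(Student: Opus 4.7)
The plan is to reduce the integral to a standard Beta-function evaluation via the substitution $u = y^{a+2}$, and then tidy up the resulting Gamma factors using $\Gamma(x+1)=x\Gamma(x)$ to match the stated form.

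Concretely, I would first set $u = y^{a+2}$, so that $y = u^{1/(a+2)}$ and $\mathrm dy = \frac{1}{a+2}\, u^{1/(a+2)-1}\,\mathrm du$. This converts the integral into
\[
\int_0^\infty \frac{y^b}{1+y^{a+2}}\,\mathrm dy = \frac{1}{a+2}\int_0^\infty \frac{u^{s-1}}{1+u}\,\mathrm du,\qquad s := \frac{b+1}{a+2}.
\]
The hypothesis $b\in\;]-1,a+1[\;$ is exactly what ensures $0<s<1$, so the classical identity
\[
\int_0^\infty \frac{u^{s-1}}{1+u}\,\mathrm du = \Gamma(s)\Gamma(1-s),\qquad 0<s<1,
\]
applies and yields
\[
\int_0^\infty \frac{y^b}{1+y^{a+2}}\,\mathrm dy = \frac{1}{a+2}\,\Gamma\!\left(\frac{b+1}{a+2}\right)\Gamma\!\left(\frac{a-b+1}{a+2}\right).
\]

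The last step is purely cosmetic: I would rewrite $\frac{b+1}{a+2}$ as $\frac{a+b+3}{a+2}-1$ and apply $\Gamma(x+1)=x\Gamma(x)$ with $x=\frac{b+1}{a+2}$ to get
\[
\Gamma\!\left(\frac{b+1}{a+2}\right) = \frac{a+2}{b+1}\,\Gamma\!\left(\frac{a+b+3}{a+2}\right),
\]
which, when substituted back, cancels the $1/(a+2)$ prefactor and produces exactly $\frac{1}{b+1}\Gamma\!\bigl(\frac{a-b+1}{a+2}\bigr)\Gamma\!\bigl(\frac{a+b+3}{a+2}\bigr)$.

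There is no real obstacle here; the only thing to be careful about is tracking the range of validity, namely verifying that the substitution gives $s\in\;]0,1[\;$ precisely under the stated hypothesis $b\in\;]-1,a+1[\;$, so that the reflection-formula version of the Beta integral is legitimate.
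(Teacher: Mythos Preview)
Your proposal is correct and essentially the same approach as the paper's: both reduce the integral to a Beta evaluation and then use $\Gamma(x+1)=x\Gamma(x)$ to reach the stated form. The only cosmetic difference is that the paper uses the single substitution $r=(1+y^{a+2})^{-1}$ to land directly on $\frac{1}{a+2}\int_0^1 r^{\frac{a+1-b}{a+2}-1}(1-r)^{\frac{b+1}{a+2}-1}\,\mathrm dr$, whereas you first substitute $u=y^{a+2}$ and then invoke the equivalent standard identity $\int_0^\infty \frac{u^{s-1}}{1+u}\,\mathrm du=\Gamma(s)\Gamma(1-s)$; these are two presentations of the same computation.
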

\begin{proof}
Let $1+y^{2+a}=r^{-1}$. Then $\ud y=-\frac{1}{a+2}r^{-\frac{a+3}{a+2}}(1-r)^{-\frac{a+1}{a+2}}\ud r$. So,
\[
\int_0^1 \ud r\: r \: r^{-\frac{b}{a+2}}(1-r)^{\frac{b}{a+2}} \frac{1}{a+2} r^{-\frac{a+3}{a+2}}(1-r)^{-\frac{a+1}{a+2}}
=
\frac{1}{a+2}\int_0^1 \ud r \: r^{\frac{a+1-b}{a+2}-1} (1-r)^{\frac{b+1}{a+2}-1}.
\]
Then apply the Beta integral and use the recursion $x\Gamma(x)=\Gamma(x+1)$.
\end{proof}

\section*{Acknowledgements}
\addcontentsline{toc}{section}{Acknowledgements}
The authors appreciate many stimulating discussions and supports from Davar Khoshnevisan,
and especially his several suggestions on the proof of the strict comparison principle.
The authors also thank  Carl Mueller for many helpful suggestions.
The authors thank Tom Alberts for some interesting discussions and for his pointing out the reference \cite{Moreno14Pos}.
The first author thanks Robert Dalang and Roger Tribe for many interesting discussions.

\def\polhk#1{\setbox0=\hbox{#1}{\ooalign{\hidewidth
  \lower1.5ex\hbox{`}\hidewidth\crcr\unhbox0}}} \def\cprime{$'$}
  \def\cprime{$'$}

\vspace{2em}
\begin{minipage}{0.9\textwidth}
{\noindent\bf Le Chen} and {\noindent\bf Kunwoo Kim}\\[2pt]
University of Utah\\
Department of Mathematics\\
155 South 1400 East\\
Salt Lake City, Utah, 84112-0090\\
U.S.A.\\
\textit{Emails:} \url{chen, kkim@math.utah.edu}\\
\end{minipage}

\end{document}